\newtheorem{definition}{Definition}[section]
\newtheorem{theorem}[definition]{Theorem}
\newtheorem{lemma}[definition]{Lemma}
\newtheorem{proposition}[definition]{Proposition}
\newtheorem{corollary}[definition]{Corollary}
\theoremstyle{remark}
\newtheorem{remark}[definition]{Remark}
\numberwithin{equation}{section}
\newcommand{\RNum}[1]{\uppercase\expandafter{\romannumeral #1\relax}}
\title{Riesz potential estimates for double obstacle problems with Orlicz growth}
\author[a]{Qi Xiong}
\author[b]{Zhenqiu Zhang\thanks{Corresponding author.}}
\author[c]{Lingwei Ma}
\affil[a]{School of Mathematics, Southwest Jiaotong University, Chengdu, Sichuan, 610031, P.R. China}
\affil[b]{School of Mathematical Sciences and LPMC, Nankai University, Tianjin, 300071, P.R. China}
\affil[c]{School of Mathematical Sciences, Tianjin Normal University, Tianjin, 300387, P.R. China}
\date{\today}
\begin{document}
\maketitle
\footnotetext[1]{E-mail: xq@swjtu.edu.cn(Q. Xiong),  zqzhang@nankai.edu.cn (Z. Zhang),mlw1103@163.com (L. Ma).}

\maketitle
\begin{abstract}
In this paper, we consider the solutions to the non-homogeneous   double  obstacle problems  with  Orlicz growth involving measure data. After establishing the existence of the solutions to this problem in the Orlicz-Sobolev space,  we  derive  a   pointwise gradient estimate for these solutions by Riesz potential,   which leads to the result on the $C^1$ regularity criterion.
\\

 Mathematics Subject classification (2010): 35B45; 35R05; 35J47.

Keywords: Riesz potential estimates; Orlicz growth conditions; measure data; double obstacle problems.
\end{abstract}


\section{Introduction and main results}\label{section1}
\ \ \  In this paper, we consider  the non-homogeneous  double obstacle problems with Orlicz growth and they are  related to  measure data problems of the type
\begin{equation}\label{1.1}
  -\operatorname{div}\left({a}(x,Du)\right) =\mu \quad\quad\mbox{in}\ \ \ \Omega \\[0.05cm],
\end{equation} 
where $ \Omega\subseteq \mathbb{R}^n, n\geqslant2 $ is a bounded open set and $\mu \in \mathcal{M}_{b}(\Omega)$, where $\mathcal{M}_{b}(\Omega)$  is the set of signed Radon measures $\mu$ for which $|\mu|(\Omega)$ is finite and  here we  denote by $|\mu|$ the total variation of $\mu$.
Moreover we
assume that $\mu(\mathbb{R}^n \backslash \Omega)=0$ and $a=a(x,\eta): \Omega \times\mathbb{R}^n \rightarrow \mathbb{R}^n$ is measurable for each $x\in \Omega$ and differentiable for almost every $\eta \in \mathbb{R}^n$ and there exist constants $0<l\leqslant 1 \leqslant L<+\infty$ such that for all $x \in \Omega,\eta,\lambda \in \mathbb{R}^n$,
\begin{eqnarray}\label{a(x)1}
  \left\{\begin{array}{r@{}c@{}ll}
&&D_{\eta} a(x,\eta )\lambda \cdot \lambda \geqslant l\dfrac{g(|\eta|)}{|\eta|}|\lambda|^2 \,, \\[0.05cm]
&&|a(x,\eta)|+|\eta||D_{\eta} a(x,\eta )|\leqslant Lg(|\eta|)\,, \\[0.05cm]
  \end{array}\right.
\end{eqnarray}
where $D_{\eta}$ denotes the differentiation in $\eta$ and $g(t) : [0,+\infty)\rightarrow [0,+\infty)$ satisfies
\begin{eqnarray}\label{a(x)3}
  \left\{\begin{array}{r@{}c@{}ll}
&&g(t)=0 \ \ \ \Leftrightarrow \ \ \   t=0 \,, \\[0.05cm]
&&g(\cdot)\in C^{1}(\mathbb{R}^+)\,, \\[0.05cm]
&& 1\leq i_{g}=: \inf_{t>0}\frac{tg'(t)}{g(t)}\leq \sup_{t>0}\frac{tg'(t)}{g(t)}=:s_{g}<\infty. \, \\[0.05cm]
  \end{array}\right.
\end{eqnarray}
We define
\begin{equation}\label{g}
G(t):= \int_0^tg(\tau)\operatorname{d}\!\tau \ \ \ \mbox{for}\ \ t\geq0.
\end{equation}
It's obvious that  $G(t)$ is convex and  strictly  increasing. We stress that we impose the Orlicz growth condition of $a(\cdot, \cdot)$ naturally covering the case of (possibly weighted) $p$-Laplacian when $G(t) = t^p$ with $p\geqslant2$, together with $p$-growth condition (see \cite{dm00}) when
 $$G(t)=\int_{0}^{t}(\mu+s^2)^{\frac{p-2}{2}} s ds$$ with $\mu\geqslant0, p\geqslant2$. 
 This type of problem is arising in the fields of fluid dynamics, magnetism, and mechanics, as illustrated in reference \cite{bl}.
Lieberman \cite{l1} initially introduced this class of elliptic equations and demonstrated the $C^{\alpha}$- and $C^{1,\alpha}$-regularity of their solutions. 
Since then, significant advancements have been made in the theory of regularity for such equations, as documented in the references \cite{bm20,cm16,cm17,cm15,rt1}.

 The obstacle condition that we impose on the solutions is  of the form $\psi_2 \geq u \geq \psi_1$ a.e.  in $\Omega$, where $\psi_1, \psi_2 \in W^{1,G}(\Omega)\cap W^{2,1}(\Omega)$ are  given functions which satisfy  $\operatorname{div}\left({a}(x,D\psi_1)\right) \in L^{1}_{loc}(\Omega), $ $ \operatorname{div}\left({a}(x,D\psi_2)\right) $ $ \in L^{1}_{loc}(\Omega)$  and $G$ is defined as \eqref{g}. If we consider an inhomogeneity  $f \in L^{1}(\Omega)\cap (W^{1,G}(\Omega))'$, where $(W^{1,G}(\Omega))'$ is the dual of $W^{1,G}(\Omega)$, the obstacle problem is characterized by the variational inequality
\begin{equation}\label{fjd}
\int_{\Omega} a(x,Du)\cdot D(v-u)dx \geq \int_{\Omega} f(v-u) dx
\end{equation}
for all functions $v \in u+W_0^{1,G}(\Omega)$ with $\psi_2 \geq v \geq \psi_1 $ a.e. in $\Omega$. The work in \cite{rt1} has confirmed the existence and uniqueness of  weak solution to the variational inequality \eqref{fjd}. Nevertheless, our attention is directed towards solutions for double obstacle problems with measure data, with the specific aim of substituting the inhomogeneity $f$ with a bounded Radon measure $\mu$. In this case, we adopt the notion of a limit of approximating  solutions as introduced in \cite{s1},  the double obstacle problems can be obtained through approximation using solutions to variational inequalities \eqref{fjd}, for a precise definition, please refer to Definition \ref{opdy}.

In this paper, we are interested in the precise transfer of regularity properties from the data $\mu$ and obstacle functions $\psi_1, \psi_2$ to the solution $u$ by using Riesz potentials.  Potential theory is essentially a part of regularity theory of partial differential equations and its aim is to provide pointwise  estimates and fine properties of solutions for nonlinear equations, which extend in a most natural way the classical ones valid for linear equations via the representation formula.
These pointwise estimates provide a unified approach to obtain the norm bounds for solutions in a wide range of function spaces. As a result, some regularity properties for solutions can be established, such as H$\ddot{o}$lder continuity, Calder\'on-Zygmund estimates and so on.
  Starting from the  fundamental results of  Kilpel$\ddot{a}$inen Mal$\acute{y}$ \cite{km5,km6}, who established pointwise estimates for solutions to  the nonlinear equations of $p$-Laplace type by the  nonlinear  Wolff potential:
$$c_1W^{\mu}_{1,p}(x,R)\leqslant u(x) \leqslant c_2W^{\mu}_{1,p}(x,R)+c_2 \inf_{B_R(x)}u,$$
where the nonlinear Wolff potential of $\mu$ is defined as
\begin{equation*}
W^{\mu}_{\beta,p}(x,R):=\int_0^R\left( \frac{|\mu|(B_{\rho}(x))}{\rho^{n-\beta p}}\right) ^{1/(p-1)}\frac{\operatorname{d}\!\rho}{\rho}
\end{equation*}
for parameters $\beta \in (0,n]$  and $p>1$. Subsequently, these results were extended to a general setting by Trudinger and Wang \cite{tw7,tw8}  using a different  approach.
Furthermore,  Mingione \cite{m9} first obtained Riesz potential estimates for gradient of solutions to nonlinear elliptic equations with linear growth ($p=2$) :
$$|Du(x)|\leqslant c\textbf{I}^{|\mu|}_{1}(x,R)+c\fint_{B_R(x)}(|Du|+s)dy,$$
where the Riesz potential are  defined by
\begin{equation*}
\textbf{\RNum{1}}^{|\mu|}_{\beta}(x,R):=\int_{0}^{R}  \frac{|\mu|(B_{\rho}(x))}{\rho^{n-\beta }}\frac{\operatorname{d}\!\rho}{\rho}.
\end{equation*}
Its form is essentially the same as the classical one valid for the Poisson equation.  In \cite{dm10}, Duzaar and Mingione proved pointwise gradient  estimates for the $p$-growth problems with $p\geqslant 2$ by Wolff potential.  In addition,
pointwise and oscillation  estimates for solutions and the gradient of solutions  by Wolff potentials have been achieved by  Duzaar  and Mingione \cite{dm00,dm10,km12}.

In \cite{km00}, Mingione proved a somewhat  surprising result by obtaining Riesz potential estimates for the gradient for
the $p$-growth problems with $p\geqslant 2$.
Indeed, the Riesz potential estimates directly imply the Wolff potential estimates for $p\geqslant2$, for more details, see \cite{km00}. Subsequently,   Kuusi  and Mingione \cite{km01}  obtained  oscillation estimates of solutions  using Riesz potential.  The extension of these gradient potential estimates includes parabolic equations \cite{kmpw} and  elliptic systems \cite{kmz}.
 Moreover, Scheven \cite{s1,s28} first obtained some potential estimates for the nonlinear elliptic  obstacle problems with $p$-growth. For more results, please see \cite{bm20,dm10,dm11,dm01,m9,mz1,xiao,xiong5}.

As for the elliptic equations with Orlicz growth, Baroni \cite{b13} established  Riesz potential estimates  for gradient of  solutions to  elliptic equations with constant coefficients.
Later, Xiong, Zhang and Ma \cite{xiong3} extended  the result to  equations with Dini-$BMO$ coefficients. The  Wolff potential estimates for elliptic systems was eatablished in \cite{cy1} and for elliptic obstacle problems was obtained in \cite{xiong1,xiong2}.

The aim of this work is to  prove the Riesz potential estimates for the elliptic double obstacle problems  with Dini-$BMO$ coefficients.   The main difficulty arises within the interplay between measure and two obstacles; to overcome this, we establish some suitable comparison estimates to transfer the double obstacle problems to the homogeneous  equation, then we deduce excess decay estimates for solutions of double obstacle problems, then iterating resulting estimates to obtain potential estimates.

Next, we summarize our main results. We begin by presenting some  definitions, notations and assumptions.
\begin{definition}
A function $G :[0,+\infty)\rightarrow[0,+\infty)$ is called a Young function if it is convex and $G(0)=0$.
\end{definition}
\begin{definition}
Assume that $G$ is a Young function,  the Orlicz class $K^{G}(\Omega)$ is the set of all measurable functions $u : \Omega\rightarrow\mathbb{R}$ satisfying
\begin{equation*}
\int_\Omega G(|u|) \operatorname{d}\!\xi < \infty.\nonumber
\end{equation*}
The Orlicz space $L^{G}(\Omega)$ is the linear hull  of the Orlicz class  $K^{G}(\Omega)$ with the Luxemburg norm
\begin{equation*}
\Vert u \Vert_{L^G(\Omega)}:=\inf\left\lbrace \alpha>0: \ \ \int_{\Omega}G\left(\frac{|u|}{\alpha} \right) \operatorname{d}\!\xi \leqslant1\right\rbrace .
\end{equation*}
Furthermore, the Orlicz-Sobolev space $W^{1,G}(\Omega)$ is defined as
\begin{equation*}
W^{1,G}(\Omega)=\left\lbrace  u\in L^{G}(\Omega)\cap W^{1,1}(\Omega) \ \vert \ Du\in L^{G}(\Omega)\right\rbrace.\nonumber
\end{equation*}
The space $W^{1,G}(\Omega)$, equipped with the norm
$\Vert u \Vert_{W^{1,G}(\Omega)}:=\Vert u \Vert_{L^G(\Omega)}+\Vert Du \Vert_{L^G(\Omega)},$ is a Banach space. Clearly, $W^{1,G}(\Omega)=W^{1,p}(\Omega)$, the standard Sobolev space, if $G(t)=t^p$ with $p\geqslant1$.
\end{definition}

 The subspace $W_{0}^{1,G}(\Omega)$ is the closure of $C_{0}^{\infty}(\Omega)$ in $W^{1,G}(\Omega)$. The above properties about Orlicz space can be found in \cite{hphp1,rr28}.

For every $k>0$ we let
\begin{equation}\label{tks}
T_{k}(s):=
\left\{\begin{array}{r@{\ \ }c@{\ \ }ll}
s\ \ \ \ \ \ \ \ if\ \ |s|\leqslant k\,, \\[0.05cm]
k\ sgn(s)\ \ \ \ \ \ \ if\ \ |s|> k\,. \\[0.05cm]
\end{array}\right.
\end{equation}

Moreover, for given Dirichlet boundary data $h\in W^{1,G}(\Omega)$, we define
$$\mathcal{T}^{1,G}_{h}(\Omega):=\left\lbrace u: \Omega\rightarrow \mathbb{R} \ measurable: T_{k}(u-h)\in W_{0}^{1,G}(\Omega) \ \ for \ all \ k>0\right\rbrace. $$

We now give the definition of approximable solutions.

\begin{definition}\label{opdy}
Suppose that two obstacle functions $\psi_1, \psi_2 \in W^{1,G}(\Omega)$, measure data $\mu \in \mathcal{M}_{b}(\Omega)$ and boundary data $h \in W^{1,G}(\Omega)$ with $\psi_2 \geq h\geq \psi_1$ a.e. are given. We say that $u \in \mathcal{T}^{1,G}_{h}(\Omega)$ with $\psi_2 \geq u \geq    \psi_1$ a.e. in $\Omega$ is  a limit of approximating solutions of the obstacle problem $OP(\psi_1 ; \psi_2; \mu)$ if there exist functions
$$f_{i} \in (W^{1,G}(\Omega))'\cap L^{1}(\Omega)\ \  with\ \  f_{i}\stackrel{\ast}\rightharpoonup \mu \ in \ \mathcal{M}_{b}(\Omega) \ \ as \ i\rightarrow+\infty$$
 satisfies
$$\limsup_{i\rightarrow+\infty}\int_{B_R(x_0)}|f_i|dx\leqslant|\mu|(\overline{B_R(x_0)}),$$
and solutions $u_{i}\in W^{1,G}(\Omega)$ with $\psi_2 \geqslant u_{i}\geqslant \psi_1$ of the variational inequalities
\begin{equation}\label{opdy1}
\int_{\Omega}a(x,Du_{i})\cdot D(v-u_{i})dx\geqslant \int_{\Omega}f_{i}(v-u_{i})dx
\end{equation}
for $\forall \ v \in u_{i}+W_{0}^{1,G}(\Omega)$ with $\psi_2 \geqslant v\geqslant \psi_1$ a.e. on $\Omega$, such that for $i\rightarrow +\infty$,
$$u_{i}\rightarrow u \ \ a.e. \ \ \  in \ \  \Omega$$
and $$u_{i}\rightarrow u \ \ \ in \ \ \ W^{1,1}(\Omega).$$
\end{definition}
Throughout this paper we define
\begin{equation*}
\textbf{I}^{[\psi_1]}_{\beta}(x,R):=\int_{0}^{R}  \frac{D\Psi_1(B_{\rho}(x))}{\rho^{n-\beta }}\frac{\operatorname{d}\!\rho}{\rho},
\end{equation*}
and
\begin{equation*}
\textbf{I}^{[\psi_2]}_{\beta}(x,R):=\int_{0}^{R}  \frac{D\Psi_2(B_{\rho}(x))}{\rho^{n-\beta }}\frac{\operatorname{d}\!\rho}{\rho}
\end{equation*}
with $$D\Psi_1(B_{\rho}(x)):=\int_{B_{\rho}(x)}|\operatorname{div}\left({a}(x,D\psi_1)\right)|d\xi,$$ and $$ D\Psi_2(B_{\rho}(x)):=\int_{B_{\rho}(x)}|\operatorname{div}\left({a}(x,D\psi_2)\right)| d\xi$$
respectively.

Following this, we state our regularity assumptions on $a(\cdot,\cdot)$, we first denote
$$\theta(a,B_{r}(x_0))(x):=\sup_{\eta \in \mathbb{R}^n\setminus \left\lbrace0\right\rbrace  }\frac{|a(x,\eta)-\overline{a}_{B_{r}(x_0)}(\eta)|}{g(|\eta|)}, $$
where $$\overline{a}_{B_{r}(x_0)}(\eta):=\fint_{B_{r}(x_0)}a(x,\eta)dx.$$
Thus, it can be readily confirmed from \eqref{a(x)1} that $|\theta(a,B_{r}(x_0))|\leqslant2L$.
\begin{definition}
We say that  $a(x,\eta)$ is ($\delta$, R)-vanishing for some $\delta, R>0$,  if
\begin{equation}\label{a(x)2}
\omega(R):=\sup_{{\substack{ x_{0}\,\in\,\Omega\\0<r\leq R}}} \left( \fint_{B_{r}(x_{0})}\theta(a,B_{r}(x_{0}))^{\gamma'}\operatorname{d}\!x\right) ^{\frac{1}{\gamma'}} \leq\delta,
\end{equation}
where $\gamma'=\frac{\gamma}{\gamma-1}$, $\gamma$ is as in ~\cite[Theorem 9]{de1}.
\end{definition}

We now present the principal results of this manuscript. The following theorem  establishes the existence of solutions for   double obstacle problems with measure data.
\begin{theorem}
 Under the assumptions  \eqref{a(x)1} and \eqref{a(x)3}, assume that $1+i_g\leqslant n$, $h \in W^{1,G}(\Omega)$ be given boundary data with $\psi_2 \geqslant h\geqslant \psi_1$ a.e. on $\Omega$, and let $u_i \in h+W_{0}^{1,G}(\Omega)$ with $\psi_2 \geqslant u_i \geqslant \psi_1$ solves the variational inequality
\begin{equation}
\int_{\Omega}a(x,Du_i)\cdot D(v-u_i)dx \geqslant \int_{\Omega}f_i(v-u_i)dx
\end{equation}
for all $v \in h+W_{0}^{1,G}(\Omega)$ with $\psi_2 \geqslant  v\geqslant \psi_1$ a.e. in $\Omega$, where $f_i \in L^{1}(\Omega) \cap (W^{1,G}(\Omega))'$ satisfy
$$F:=\sup_{i\in \mathbb{N}}\parallel f_i \parallel_{L^{1}(\Omega)}<+\infty.$$
Then there exists a subsequence $\left\lbrace i_j \right\rbrace \subset \mathbb{N} $ and a limit map $u \in \mathcal{T}_{h}^{1,G}(\Omega)$ with $\psi_2 \geqslant u\geqslant \psi_1$ such that
$u_{i_j}\rightarrow u$ in the sense of Definition \ref{opdy}.
\end{theorem}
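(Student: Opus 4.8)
The plan is to obtain the limit map $u$ as a (subsequential) limit of the $u_i$ through a standard compactness-and-monotonicity argument, carefully tracking the double obstacle constraint and the truncations so that the limit lies in $\mathcal{T}^{1,G}_h(\Omega)$ and still satisfies $\psi_2\geqslant u\geqslant\psi_1$. First I would derive a priori estimates on the $u_i$: testing the variational inequality \eqref{opdy1} with an admissible competitor built from $h$ (for instance $v=h$, which is admissible since $\psi_2\geqslant h\geqslant\psi_1$), together with the growth and monotonicity bounds \eqref{a(x)1} and the definition \eqref{g} of $G$, yields a uniform bound for $\int_\Omega G(|Du_i|)\,dx$ in terms of $F$, $\|h\|_{W^{1,G}(\Omega)}$ and the obstacle data. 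The condition $1+i_g\leqslant n$ is what makes the sub-$(n/(n-1))$ type estimate work: via a truncation argument à la Boccardo--Gallouët (applied to $T_k(u_i-h)$) one upgrades this to a uniform bound for $Du_i$ in the Marcinkiewicz/Lorentz scale, and in particular a uniform bound $\|Du_i\|_{L^{q}(\Omega)}\leqslant C$ for some $q>1$ (and $\|u_i\|_{W^{1,1}}\leqslant C$). This is the technical heart of the existence proof and the step I expect to be the main obstacle, since one must run the Boccardo--Gallouët iteration in the Orlicz setting and keep the obstacle terms under control; the exponents $i_g,s_g$ from \eqref{a(x)3} govern the admissible range and the hypothesis $1+i_g\leqslant n$ guarantees the resulting integrability exponent exceeds $1$.

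Next I would extract a subsequence, still denoted $u_i$, with $u_i\rightharpoonup u$ weakly in $W^{1,1}(\Omega)$ (indeed weakly in $W^{1,q}$), $u_i\to u$ strongly in $L^1(\Omega)$ and a.e.\ in $\Omega$ by Rellich--Kondrachov, and also $a(x,Du_i)\rightharpoonup \overline{A}$ weakly in $L^1$ (after a further refinement using the growth bound and equi-integrability coming from the Orlicz estimate). The a.e.\ convergence immediately gives $\psi_2\geqslant u\geqslant\psi_1$ a.e. To see that $u\in\mathcal{T}^{1,G}_h(\Omega)$, I would apply the a priori estimate to the truncations $T_k(u_i-h)$: these are uniformly bounded in $W_0^{1,G}(\Omega)$ for each fixed $k$, hence converge weakly in $W_0^{1,G}(\Omega)$, and since $W_0^{1,G}$ is (for the relevant range of $G$) reflexive and the weak limit must coincide with $T_k(u-h)$ by the a.e.\ convergence, we get $T_k(u-h)\in W_0^{1,G}(\Omega)$ for every $k$.

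It remains to identify the limit and verify the convergences demanded in Definition \ref{opdy}. For the strong $W^{1,1}$ convergence I would use a Minty-type / monotonicity argument: testing \eqref{opdy1} with competitors of the form $v=u_i\pm \varepsilon\varphi$ for $\varphi\in C_0^\infty$ small enough that admissibility is preserved where the obstacles are inactive, and exploiting the strict monotonicity in \eqref{a(x)1}, one shows that $Du_i\to Du$ a.e.\ (a standard trick: the nonnegative quantities $(a(x,Du_i)-a(x,Du))\cdot(Du_i-Du)$ converge to $0$ in $L^1$, forcing a.e.\ convergence of the gradients along a subsequence), and then Vitali's theorem with the equi-integrability of $\{Du_i\}$ (again from the uniform Orlicz bound) promotes this to $Du_i\to Du$ in $L^1(\Omega)$. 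Finally, the normalization $f_i\stackrel{\ast}{\rightharpoonup}\mu$ and $\limsup_i\int_{B_R}|f_i|\leqslant|\mu|(\overline{B_R})$ are not something to be \emph{proved} here — they are part of the construction: one chooses the approximating $f_i$ (e.g.\ mollifications of $\mu$, suitably truncated) at the outset so these hold, and the theorem's hypothesis $F=\sup_i\|f_i\|_{L^1}<\infty$ is exactly what the a priori estimates consume. Assembling these pieces shows $u_{i_j}\to u$ in the sense of Definition \ref{opdy}, completing the proof.
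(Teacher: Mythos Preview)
The paper itself does not prove this theorem: the Remark immediately following the statement explains that the argument is a minor adaptation of the single-obstacle existence result from the authors' earlier work \cite{xiong1}, and the proof is omitted. Your outline follows the expected route (truncated a priori estimates, compactness, a.e.\ gradient convergence via monotonicity, Vitali for $L^1$ convergence) and is consistent with what that adaptation would contain.

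One step in your plan is mis-ordered, however. Testing \eqref{opdy1} with $v=h$ does \emph{not} directly yield a uniform bound on $\int_\Omega G(|Du_i|)\,dx$: the right-hand side contains $\int_\Omega f_i(h-u_i)\,dx$, and since $f_i$ is only in $L^1$ while $u_i-h$ has no a priori $L^\infty$ bound, this term is uncontrolled. The truncation argument is not an ``upgrade'' of a pre-existing $G$-estimate --- it is the mechanism by which \emph{any} a priori estimate is obtained. One tests instead with $v=h+T_k(u_i-h)$; this is admissible because $v$ lies pointwise between $\min\{u_i,h\}\geqslant\psi_1$ and $\max\{u_i,h\}\leqslant\psi_2$, and it belongs to $h+W_0^{1,G}(\Omega)$. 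This gives, for each fixed $k$, a bound on $\int_{\{|u_i-h|\leqslant k\}}G(|Du_i|)\,dx$ that is linear in $k$, and the Boccardo--Gallou\"et layer-cake argument on the level sets of $|u_i-h|$ and $|Du_i|$ then produces the sub-dual Marcinkiewicz bound; the hypothesis $1+i_g\leqslant n$ enters precisely here to make the resulting integrability exponent exceed $1$. With this correction the rest of your outline (reflexivity for $T_k(u-h)\in W_0^{1,G}$, monotonicity for a.e.\ convergence of $Du_i$, equi-integrability plus Vitali for $Du_i\to Du$ in $L^1$) is the standard argument.

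A small remark on your last paragraph: in the theorem as stated the $f_i$ are \emph{given} with $\sup_i\|f_i\|_{L^1}<\infty$, and the limiting measure $\mu$ in Definition~\ref{opdy} is obtained as a weak-$*$ accumulation point of the $f_i$ (via Banach--Alaoglu in $\mathcal{M}_b(\Omega)$), not chosen in advance; so the weak-$*$ convergence and the $\limsup$ inequality are indeed part of the extraction, not of a construction.
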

\begin{remark}
Our previous study \cite{xiong1} has proven the existence of approximating solutions that converge in the manner described in Definition \ref{opdy} for the single obstacle problem. Subsequently, the existence discussed in this paper can be attained through minor adaptations.
We omit its proof.
\end{remark}
Our second result is the gradient Riesz estimates  for the limits of these approximating solutions to $OP(\psi_1 ; \psi_2; \mu)$.
\begin{theorem}\label{th1}
 Under the assumptions  \eqref{a(x)1},  \eqref{a(x)3} and   \eqref{a(x)2},  assume that $u \in W^{1,1}(\Omega)$ with $\psi_2 \geqslant  u\geqslant \psi_1$ a.e. is a limit of approximating solutions to $OP(\psi_1; \psi_2; \mu)$ with measure data $\mu \in \mathcal{M}_{b}(\Omega)$(in the sense of Definition \ref{opdy}),  and assume that $\omega(\cdot)^{\frac{1}{1+s_g}}$ is Dini-BMO regular, that is
\begin{equation}\label{dytj}
\sup_{r>0}\int_{0}^{r}[\omega(\rho)]^{\frac{1}{1+s_g}}\frac{d\rho}{\rho}< +\infty,
\end{equation}
 Then there exists a constant $c=c(data,\beta,\omega(\cdot))$ such that
 \begin{eqnarray}\label{gdex} \nonumber
&& g(|Du(x_0)|) \\
 &\leqslant& c \left( \mathbf{\RNum{1}}^{|\mu|}_{1}(x_0,2R)+ \mathbf{\RNum{1}}^{[\psi_1]}_{1}(x_0,2R)+ \mathbf{\RNum{1}}^{[\psi_2]}_{1}(x_0,2R)\right) +cg\left(\fint_{B_{R}(x_0)}|Du|dx\right)
 \end{eqnarray}
 where $x_0 \in \Omega$ is the Lebesgue point of $Du$,  $B_{2R}(x_0)\subseteq \Omega$ and $\beta$ is as in Lemma \ref{zcth}.
\end{theorem}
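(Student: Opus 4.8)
The plan is to follow the by-now standard road map for gradient potential estimates (as in \cite{km00, b13, xiong1}), adapted to the presence of two obstacles. The overall architecture has three layers: (i) comparison estimates that replace the obstacle problem by a homogeneous equation with frozen coefficients on small balls; (ii) a decay/excess estimate for the homogeneous problem, transferred back to $u$ via (i); (iii) an iteration over dyadic balls $B_{r_j}(x_0)$ with $r_j = \sigma^j R$, summing the resulting geometric-type inequalities into the Riesz potentials $\mathbf{I}^{|\mu|}_1$, $\mathbf{I}^{[\psi_1]}_1$, $\mathbf{I}^{[\psi_2]}_1$ and controlling the limit by $g(|Du(x_0)|)$ at the Lebesgue point.

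First I would set up the comparison chain on a fixed ball $B_r = B_r(x_0)\subseteq \Omega$. Starting from the approximating solution $u_i$ to the variational inequality \eqref{opdy1}, introduce three successive comparison maps: $w_i$ solving the obstacle-free problem $-\operatorname{div}(a(x,Dw_i)) = 0$ in $B_r$ with $w_i = u_i$ on $\partial B_r$ — here the obstacle functions enter through the Euler--Lagrange measure of the obstacle problem, whose total variation is controlled by $|\mu|(B_r) + D\Psi_1(B_r) + D\Psi_2(B_r)$, which is exactly why the two extra Riesz potentials appear in \eqref{gdex}; then $v_i$ solving $-\operatorname{div}(\overline{a}_{B_r}(Dv_i)) = 0$ with $v_i = w_i$ on $\partial B_r$, which freezes the $x$-dependence and costs a term governed by $\omega(r)$ through the definition of $\theta(a,B_r)$ and \eqref{a(x)2}; finally one uses the known $C^{1,\alpha}$ estimates of Lieberman type for the frozen equation. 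Each comparison step is estimated in the natural Orlicz-energy quantity, and the nonlinear structure forces one to work with $g$ of averages rather than averages directly; the Dini condition \eqref{dytj}, in the form $\omega(\cdot)^{1/(1+s_g)}$ summable, is precisely what makes the frozen-coefficient error summable along the dyadic scales. I expect the passage to the limit $i\to\infty$ (using $f_i \stackrel{\ast}{\rightharpoonup}\mu$, $u_i\to u$ in $W^{1,1}$, and the $\limsup$ bound on $\int|f_i|$ from Definition \ref{opdy}) to be essentially soft once the comparison estimates are uniform in $i$.

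The core quantitative step is a \emph{one-step excess decay}: for a suitable $\sigma\in(0,1)$ and with $E(B_r):= \fint_{B_r}|Du - (Du)_{B_r}|\,dx$ (or its $g$-averaged analogue), establish
\begin{equation*}
E(B_{\sigma r}) \leqslant \tfrac12 E(B_r) + c\,\Bigl[\omega(r)\Bigr]^{\frac{1}{1+s_g}}\Bigl(\fint_{B_r}|Du|\,dx + \dots\Bigr) + c\, g^{-1}\!\Bigl(\tfrac{|\mu|(B_r) + D\Psi_1(B_r) + D\Psi_2(B_r)}{r^{n-1}}\Bigr),
\end{equation*}
obtained by combining the comparison chain above with the Campanato-type decay for the frozen homogeneous equation. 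The main obstacle, and where the real work lies, is handling the \emph{interplay between the measure and the two obstacles} in this estimate: the obstacle problem's natural testing identity only yields one-sided information, so one must exploit $\psi_1 \leqslant u_i \leqslant \psi_2$ together with the assumed regularity $\operatorname{div}(a(x,D\psi_k))\in L^1_{loc}$ to bound the Radon--Nikodym measure of $-\operatorname{div}(a(x,Du_i))$ from above and below by $|\mu|$ plus the obstacle contributions — this is the step that decides whether the two new potential terms genuinely close the estimate. One also has to keep careful track of the Orlicz nonlinearity: because $g$ is only comparable to a power between $i_g$ and $s_g$, the standard "$A + \tfrac12 A$" absorption must be done with the quasi-power inequalities for $G$ and $g^{-1}$, and the exponent $\frac{1}{1+s_g}$ has to be matched against the growth so that both the frozen-coefficient error and the obstacle error sum.

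Finally I would run the iteration. Writing the one-step inequality at scales $r_j = \sigma^j R$, summing over $j$, and using that $\sum_j [\omega(r_j)]^{1/(1+s_g)} \lesssim \int_0^R [\omega(\rho)]^{1/(1+s_g)}\frac{d\rho}{\rho} < \infty$ by \eqref{dytj}, while $\sum_j g^{-1}\!\bigl(|\mu|(B_{r_j})/r_j^{n-1}\bigr)$ is comparable to a discretization of $\mathbf{I}^{|\mu|}_1(x_0,2R)$ (and similarly for the obstacle potentials), one gets a uniform bound on $\fint_{B_{r_j}}|Du|\,dx$ along the sequence of shrinking balls. Passing $j\to\infty$ at the Lebesgue point $x_0$ converts the left side into $|Du(x_0)|$, and applying $g$ — which is increasing and doubling in the relevant range — yields \eqref{gdex} with the claimed dependence of the constant on $data$, $\beta$ and the modulus $\omega(\cdot)$. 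The only care needed here is that the various averaged excess quantities and $g$-of-averages are interchanged using the doubling of $G$, which is where $i_g, s_g$ finiteness is used once more.
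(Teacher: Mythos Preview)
Your road map would produce a \emph{Wolff-type} gradient bound, not the Riesz potential estimate \eqref{gdex}. The issue is in your one-step decay and in how you sum it: with an error term of the form $g^{-1}\!\bigl((|\mu|+D\Psi_1+D\Psi_2)(B_r)/r^{n-1}\bigr)$, the dyadic sum $\sum_j g^{-1}\!\bigl(|\mu|(B_{r_j})/r_j^{n-1}\bigr)$ is \emph{not} comparable to $\mathbf{I}^{|\mu|}_1(x_0,2R)=\sum_j |\mu|(B_{r_j})/r_j^{n-1}$ unless $g$ is linear. Applying $g$ at the end does not help, since $g\bigl(\sum_j g^{-1}(a_j)\bigr)\not\approx \sum_j a_j$ in general. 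What you have outlined is essentially the argument behind the Orlicz--Wolff estimates in \cite{xiong1,xiong2}, not the Riesz estimate claimed here.

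The missing idea is the Kuusi--Mingione/Baroni \emph{linearization} step, and the paper implements it carefully. One defines $\lambda$ as the right-hand side of \eqref{gdex} (up to constants $H_1,\dots,H_4$), introduces an exit-time index $i_e$ for the quantity $C_i=\sum_{j=i-2}^{i}\fint_{B_j}|Du|+\delta^{-n}E(Du,B_i)$, and then shows---using the exit condition $C_i>\lambda/10$ together with an oscillation bound for $Dw_3$---that the reference solution $w_3^j$ satisfies a \emph{two-sided} bound $\lambda/H\leqslant |Dw_3^{j}|\leqslant H\lambda$ on $B_{j+1}$. Under this nondegeneracy hypothesis the comparison estimates upgrade from the nonlinear form $g^{-1}(\cdot)$ to the \emph{linear} form
\[
\fint_{B_{j+1}}|Du-Dw_3^{j+1}|\,dx \;\leqslant\; c\,\frac{\lambda}{g(\lambda)}\Bigl[\tfrac{|\mu|(\overline{B_j})}{r_j^{n-1}}+\tfrac{D\Psi_1(B_j)}{r_j^{n-1}}+\tfrac{D\Psi_2(B_j)}{r_j^{n-1}}\Bigr]
\]
(Lemmas \ref{glam-0}--\ref{glam-2}); summing these gives $\frac{\lambda}{g(\lambda)}$ times the Riesz potentials, which is then absorbed into $\lambda$ by choice of $H_2,H_3,H_4$. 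Your three-step chain $u_i\to w_i\to v_i$ is also too coarse for this: the paper uses a five-step chain $u\to w_0^i\to w_1^i\to w_2^i\to w_3^i\to w_4^i$ (double obstacle $\to$ single obstacle with $\psi_2$-right-hand side $\to$ homogeneous single obstacle $\to$ equation with $\psi_1$-right-hand side $\to$ homogeneous equation $\to$ frozen coefficients), precisely so that each obstacle contribution can be peeled off separately and the weighted energy inequalities (Lemma \ref{bjuw1-}, Corollary \ref{coro1}, Lemmas \ref{bjuw1}--\ref{bjw1w2}) needed for the linearized bounds can be proved by admissible test-function choices.
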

\begin{remark}
To the best of our knowledge, very limited research exists on the gradient estimate associated with double obstacle problems, and our work introduces a new approach, providing a fresh perspective on the solutions to these double obstacle problems.
\end{remark}

 Furthermore, as a consequence of Theorem \ref{th1}, we are able to derive criteria for gradient continuity of solutions to double  obstacle problems.This is expressed in the following
\begin{theorem}\label{th2}
Suppose that the above assumptions of Theorem \ref{th1}   are satisfied, and moreover , if
\begin{equation}\label{limm}
\lim_{R\rightarrow0}\mathbf{\RNum{1}}^{|\mu|}_{1}(\cdot,R)=\lim_{R\rightarrow0}\mathbf{\RNum{1}}^{[\psi_1]}_{1}(\cdot,R)=\lim_{R\rightarrow0}\mathbf{\RNum{1}}^{[\psi_2]}_{1}(\cdot,R)=0 \ \ \ \ \  locally \ uniformly \ in \ \Omega \ with \ respect \ to \ x,
\end{equation}
then $Du$ is continuous in $\Omega$.
\end{theorem}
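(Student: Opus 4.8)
\textbf{Proof proposal for Theorem \ref{th2}.}
The plan is to deduce gradient continuity from the pointwise potential bound \eqref{gdex} of Theorem \ref{th1}, via a standard argument: upgrade the pointwise estimate to an estimate on oscillations of $Du$, and then use the uniform decay hypothesis \eqref{limm} to conclude equicontinuity. First I would observe that, under the running assumptions, $Du$ is locally bounded on the set of its Lebesgue points; indeed, applying \eqref{gdex} at an arbitrary Lebesgue point $x_0$ together with the boundedness of the Riesz potentials $\mathbf{\RNum{1}}^{|\mu|}_{1}$, $\mathbf{\RNum{1}}^{[\psi_1]}_{1}$, $\mathbf{\RNum{1}}^{[\psi_2]}_{1}$ (which follows from the Dini-type control \eqref{dytj} and $\mu \in \mathcal{M}_b(\Omega)$, $\operatorname{div}(a(x,D\psi_i)) \in L^1_{loc}$, respectively, on compactly contained balls) and with $G$ having the $\nabla_2$-$\Delta_2$ property encoded in \eqref{a(x)3}, shows $g(|Du(x_0)|)$ is bounded by a constant depending only on $\operatorname{dist}(x_0,\partial\Omega)$ and $\fint |Du|$; inverting $g$ gives a local $L^\infty$ bound for $Du$.

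The core of the argument is the excess decay estimate for $Du$ that underlies Theorem \ref{th1} (the iteration of the comparison and excess-decay lemmas, in particular Lemma \ref{zcth} and the lemma providing the exponent $\beta$), which I would apply not to a single center but uniformly. Concretely, I expect that the machinery behind \eqref{gdex} yields, for any ball $B_r(x) \Subset \Omega$, an estimate of the form
\begin{equation*}
\fint_{B_r(x)}\bigl|Du - (Du)_{B_r(x)}\bigr|\,dx \leqslant c\,\lambda(x,2r) + c\,\Bigl(\tfrac{r}{R_0}\Bigr)^{\beta}\fint_{B_{R_0}(x)}|Du|\,dx,
\end{equation*}
where $\lambda(x,2r)$ abbreviates the sum of the three truncated Riesz potentials at scale $2r$. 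Here I would invoke \eqref{limm}: the potentials $\lambda(x,r)$ tend to $0$ as $r\to 0$ locally uniformly in $x$, so for any compact $K\Subset\Omega$ and any $\varepsilon>0$ there is $r_\varepsilon$ (independent of $x\in K$) making the first term $<\varepsilon$, while the second term is controlled using the local $L^\infty$ bound on $Du$ from the previous paragraph and can be made $<\varepsilon$ by taking $r$ small. This proves that $x\mapsto (Du)_{B_r(x)}$ is uniformly Cauchy as $r\to 0$, uniformly on $K$; hence $Du$ has a representative that is the locally uniform limit of the continuous maps $x\mapsto (Du)_{B_r(x)}$, and is therefore continuous in $\Omega$. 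A standard Lebesgue-point argument identifies this representative with $Du$ a.e., and in fact everywhere.

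The main obstacle, as in all such ``VMO-type'' continuity criteria, is establishing the \emph{uniform} (in the center $x$) excess-decay estimate with the $\beta$-power gain and the additive potential term, starting only from the pointwise statement \eqref{gdex}; one cannot literally quote \eqref{gdex} as a black box since it is a statement at a fixed Lebesgue point. The remedy is to re-run the proof of Theorem \ref{th1} keeping track of the oscillation quantity $\fint_{B_r}|Du-(Du)_{B_r}|$ rather than just $g(|Du(x_0)|)$ — the comparison estimates transferring the double obstacle problem to the homogeneous frozen-coefficient equation, the $C^{1,\beta}$-regularity for the latter, and the iteration are all unchanged; only the bookkeeping differs. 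A secondary technical point is the Dini-BMO term \eqref{dytj}: it enters the comparison estimates as an extra summable error, and one must check it does not spoil uniformity in $x$, which it does not since \eqref{a(x)2} is a supremum over $x_0\in\Omega$. Once the uniform excess decay is in hand, the passage to continuity is routine.
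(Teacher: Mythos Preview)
Your high-level strategy---local boundedness from \eqref{gdex}, then some form of excess decay uniform in the center, then Cauchy convergence of averages---is exactly the skeleton the paper follows, and the paper likewise defers the final step to the template of Kuusi--Mingione \cite{km00}. Two points, however, deserve correction.

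First, the displayed intermediate estimate you ``expect'',
\[
\fint_{B_r(x)}\bigl|Du - (Du)_{B_r(x)}\bigr|\,dx \;\leqslant\; c\,\lambda(x,2r) \;+\; c\Bigl(\tfrac{r}{R_0}\Bigr)^{\beta}\fint_{B_{R_0}(x)}|Du|\,dx,
\]
is \emph{not} what the comparison machinery actually delivers. The iteration behind \eqref{gdex} (and \eqref{zdgj-} in particular) gives geometric decay of the excess only \emph{conditionally}, namely when the auxiliary gradient $|Dw_3^{j}|$ is trapped between two positive multiples of a fixed level $\lambda$ (this is the hypothesis \eqref{tjgd} in Lemmas \ref{glam-0}--\ref{glam-2}). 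Outside that nondegenerate regime the linearized comparison Lemmas \ref{glam-0}--\ref{glam-2} are unavailable, and one only has the cruder estimates of Corollary \ref{dudw1c} and Lemmas \ref{dudw1}--\ref{dw2w3}, which do not give the clean $(r/R_0)^\beta$ gain. The paper handles this by proving an explicit VMO result (Proposition \ref{prop1}): under the \emph{weaker} hypothesis that the densities $|\mu|(B_r)/r^{n-1}$, $D\Psi_1(B_r)/r^{n-1}$, $D\Psi_2(B_r)/r^{n-1}$ vanish locally uniformly, one shows $\fint_{B_\rho}|Du-(Du)_{B_\rho}|\leqslant\varepsilon\lambda$ for all $\rho<r_\varepsilon$, via a case split---if the average of $|Du|$ on $B_{i+2}$ is already below $\varepsilon\lambda/2$ the excess is trivially small, while if not one is in the nondegenerate regime and the iteration applies. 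The VMO conclusion (not a H\"older-type power decay) is the correct intermediate step; your displayed inequality overshoots. A separate minor slip: after summing the iteration the additive potential term appears at the \emph{outer} scale $R_0$, not at $2r$.

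Second, a small inaccuracy in your first paragraph: the local boundedness of $\mathbf{I}_1^{|\mu|}(\cdot,R)$ does not come from \eqref{dytj} (that condition concerns the coefficient modulus $\omega$, not $\mu$); it follows directly from the locally uniform vanishing hypothesis \eqref{limm}.
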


The remainder of this paper is organized as follows. Section 2 contains some notions and preliminary results. In Section 3,   we  obtain  some comparison estimates.  In Section 4, we complete the proof of several theorems.

\section{Preliminaries}\label{section2}
Throughout this paper,  we shall adopt the convention of denoting by $c$  a constant that may vary from line to line. In order to shorten notation, we collect the dependencies of certain constants on the parameters of our problem as $$data = data(n,i_g,s_g,l,L).$$ Additionally, $A\lesssim B$ means $A\leqslant cB$,  $A\approx B $  means  $A\lesssim B$ and $B \lesssim A$. For an integrable map $f: \Omega \rightarrow \mathbb{R}^n $, we write
$$(f)_{\Omega}:=\fint_{\Omega}fdx:=\frac{1}{|\Omega|}\int_{\Omega}fdx.$$
For $q\in[1,\infty)$, it is easily verified that
\begin{equation} \label{1.8}
\parallel f-(f)_{\Omega}\parallel_{L^q(\Omega)}\leqslant2\min_{c\in \mathbb{R}^m}\parallel f-c\parallel_{L^q(\Omega)}.
\end{equation}
\begin{definition}
A Young function $G$ is called an $N$-function if
$$0<G(t)<+\infty \ \ for \ t>0$$
and
\begin{equation*}\label{nhanshu}
\lim_{t\rightarrow+\infty}\frac{G(t)}{t}=\lim_{t\rightarrow0}\frac{t}{G(t)}=+\infty.
\end{equation*}
It's obvious that $G(t)$ defined as \eqref{g} is an $N$-function.

The Young conjugate  of a Young function $G$ will be denoted by $G^{\ast}$ and defined as
$$G^{\ast}(t)=\sup_{s\geq 0}\left\lbrace st-G(s)\right\rbrace  \ \ for \ t\geq 0.$$
\end{definition}
In particular,  if $G$ is an $N$-function, then $G^{\ast}$ is  an $N$-function as well.
\begin{definition}
A Young function $G$ is said to satisfy the global $\vartriangle_2$ condition, denoted by $G\in\vartriangle_2$, if there exists a positive constant $c$ such that for every $t>0$,
\begin{equation*}
G(2t)\leq cG(t).
\end{equation*}
Similarly, a Young function $G$ is said to satisfy the global $\bigtriangledown_2$ condition, denoted by $G\in\bigtriangledown_2$, if there exists a  constant $\theta >1$ such that for every $t>0$,
\begin{equation*}
G(t)\leq \frac{G(\theta t)}{2\theta}.
\end{equation*}
\end{definition}

\begin{remark}  \label{remark1}
For an increasing function $f: \mathbb{R}^+\rightarrow\mathbb{R}^+$ satisfying  $\vartriangle_2$ condition $f(2t)\lesssim f(t)$ for $t\geqslant0$, it is easy to prove that $f(t+s)\leqslant c[f(t)+f(s)]$ holds for every $t,s\geqslant0$.
\end{remark}

Subsequently, let us revisit a fundamental  property of an
$N$-function, essential for forthcoming developments.
\begin{lemma}\cite{b13}\label{gyoung}
If $G$ is an $N$-function, then $G$ satisfies the following Young's inequality
$$st\leq G^{*}(s)+G(t), \ \ \ for \ \ \forall s,t\geq0.$$
Furthermore, if $G\in \bigtriangleup_{2}\cap \bigtriangledown_{2}$ is an $N$-function, then $G$ satisfies the following Young's inequality with $\forall \varepsilon >0$,
$$st\leq \varepsilon G^{*}(s)+c(\varepsilon)G(t), \ \ \ for \ \ \forall s,t\geq0.$$
\end{lemma}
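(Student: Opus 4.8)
\emph{Proof sketch.} The plan is to get the first inequality directly from the definition of the conjugate, and the second by a rescaling trick that reduces it to the first. For the first inequality, recall that $G^{*}(s)=\sup_{\tau\geq0}\{s\tau-G(\tau)\}$; evaluating the supremum at the admissible point $\tau=t$ gives $G^{*}(s)\geq st-G(t)$, i.e. $st\leq G^{*}(s)+G(t)$ for all $s,t\geq0$. This needs no structural assumption on $G$ beyond convexity; requiring $G$ to be an $N$-function only ensures that $G^{*}$ is again a finite-valued $N$-function, so the bound is meaningful.

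For the refined inequality I would rescale the two factors of the product and then absorb the scaling constants---on the $G^{*}$-side by convexity, on the $G$-side by the $\bigtriangleup_{2}$ condition. Fix $\varepsilon>0$ and set $\delta:=\min\{1,\varepsilon\}\in(0,1]$. Writing $st=(\delta s)\bigl(t/\delta\bigr)$ and applying the first part,
$$st\leq G^{*}(\delta s)+G\bigl(t/\delta\bigr).$$
Since $G^{*}$ is convex with $G^{*}(0)=0$ and $\delta\in(0,1]$, we get $G^{*}(\delta s)=G^{*}\bigl(\delta s+(1-\delta)\cdot 0\bigr)\leq\delta\,G^{*}(s)\leq\varepsilon\,G^{*}(s)$. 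For the other term, with $\lambda:=1/\delta\geq1$ and $k$ the least integer such that $2^{k}\geq\lambda$, iterating $G(2t)\leq c_{0}G(t)$ ($c_{0}$ being the $\bigtriangleup_{2}$-constant of $G$) $k$ times yields $G(t/\delta)\leq G(2^{k}t)\leq c_{0}^{\,k}\,G(t)=:c(\varepsilon)\,G(t)$, with $c(\varepsilon)$ depending only on $\varepsilon$ and $c_{0}$. Adding the two estimates gives $st\leq\varepsilon\,G^{*}(s)+c(\varepsilon)\,G(t)$.

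Every step is elementary, so I do not anticipate any real difficulty; the only point to be careful about is that $c(\varepsilon)$ must be independent of $s$ and $t$, which the dyadic iteration of $\bigtriangleup_{2}$ makes transparent. I would also remark that the argument uses only $G\in\bigtriangleup_{2}$; the $\bigtriangledown_{2}$ hypothesis is not needed for this statement, but it is harmless and worth keeping since it is equivalent to $G^{*}\in\bigtriangleup_{2}$ and thus makes the estimate symmetric in the pair $(G,G^{*})$, which is convenient later in the paper.
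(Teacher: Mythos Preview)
Your proof is correct. The paper does not supply its own proof of this lemma; it simply records the statement with a citation to \cite{b13}, so there is no argument in the paper to compare against at the level of technique. Your derivation of the first inequality straight from the definition of $G^{*}$ is the standard one-line observation, and your rescaling argument for the $\varepsilon$-version is clean: the convexity bound $G^{*}(\delta s)\leq \delta\,G^{*}(s)$ (using $G^{*}(0)=0$) together with the dyadic iteration of the $\bigtriangleup_{2}$ condition gives exactly what is claimed, with $c(\varepsilon)$ depending only on $\varepsilon$ and the $\bigtriangleup_{2}$-constant. Your remark that the $\bigtriangledown_{2}$ hypothesis is not used in this proof is also accurate; indeed the $\varepsilon$-Young inequality in this form needs only $G\in\bigtriangleup_{2}$, while $\bigtriangledown_{2}$ (equivalently $G^{*}\in\bigtriangleup_{2}$) would be needed for the symmetric version with the roles of $G$ and $G^{*}$ interchanged, which is how the inequality is sometimes applied later in the paper.
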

Note that $G(t)$, defined as \eqref{g}, belongs to $\bigtriangleup_{2}\cap \bigtriangledown_{2}$ and is an $N$-function and therefore  satisfies the Young's inequality. Another important property of Young's conjugate function is the following inequality, which  can be found in \cite{a1}:
\begin{equation}\label{a(x)4}
G^{*}\left( \frac{G(t)}{t}\right) \leqslant G(t).
\end{equation}

Next we define
\begin{equation*}
V_{g}(z):=\left[ \frac{g(|z|)}{|z|}\right] ^{\frac{1}{2}}z,
\end{equation*}
then we have an anlog of  a quantity in the study of the $p-$Laplacian operator,
\begin{equation}\label{vgz1}
|V_{g}(z_1)-V_{g}(z_2)|^{2}\approx \frac{g(|z_1|+|z_2|)}{|z_1|+|z_2|}|z_1-z_2|^{2}\approx g'(|z_1|+|z_2|)|z_1-z_2|^{2}.
\end{equation}
By Lemma 3 in \cite{de1}, we obtain
\begin{equation}\label{vgs}
[a(x,z_1)-a(x,z_2)]\cdot (z_1-z_2)\approx |V_{g}(z_1)-V_{g}(z_2)|^{2}
\end{equation}
Combining the two estimates to get
\begin{align}\label{gvgs}\nonumber
G(|z_1-z_2|)\leq c\frac{g(|z_1-z_2|)}{|z_1-z_2|}|z_1-z_2|^2\leq c\frac{g(|z_1|+|z_2|)}{|z_1|+|z_2|}|z_1-z_2|^{2}\\
\leq c[a(x,z_1)-a(x,z_2)]\cdot (z_1-z_2)
\end{align}

In preparation for proving our forthcoming results, it is essential to elucidate certain aspects regarding the functions $g$ and $G$ and the embedding relationships between the Orlicz and Lebesgue spaces. To this end, we recall the following lemma, with its proof provided in \cite[Lemma 3.1]{xiong2}.
\begin{lemma}\label{ag}
Assume that $g(t)$ satisfies   \eqref{a(x)3}, $G(t)$ is defined in \eqref{g}. Then we have

(1) for any  $\beta \geq1$,

 \ \ \ \ \ \ \ \ \ $\beta^{i_g}\leq \dfrac{g(\beta t)}{g(t)}\leq \beta^{s_g}$ \ \ \ and \ \ \  $\beta^{1+i_g}\leq \dfrac{G(\beta t)}{G(t)} \leq \beta^{1+s_g}$,  \ \ \  for every $t>0$,

for any  $0<\beta<1$,

 \ \ \ \ \ \ \ \ \ $\beta^{s_g}\leq \dfrac{g(\beta t)}{g(t)}\leq \beta^{i_g}$ \ \ \ and \ \ \
$\beta^{1+s_g}\leq \dfrac{G(\beta t)}{G(t)} \leq \beta^{1+i_g}$, \ \ \ for every $t>0$.

(2) for any  $\beta \geq1$,

 \ \ \ \ \ \ \ \ \ $\beta^{\frac{1}{s_g}}\leq \dfrac{g^{-1}(\beta t)}{g^{-1}(t)}\leq \beta^{\frac{1}{i_g}}$ \ \ \ and \ \ \  $\beta^{\frac{1}{1+s_g}}\leq \dfrac{G^{-1}(\beta t)}{G^{-1}(t)} \leq \beta^{\frac{1}{1+i_g}}$,  \ \ \  for every $t>0$,

for any  $0<\beta<1$,

 \ \ \ \ \ \ \ \ \ $\beta^{\frac{1}{i_g}}\leq \dfrac{g^{-1}(\beta t)}{g^{-1}(t)}\leq \beta^{\frac{1}{s_g}}$ \ \ \ and \ \ \
$\beta^{\frac{1}{1+i_g}}\leq \dfrac{G^{-1}(\beta t)}{G^{-1}(t)} \leq \beta^{\frac{1}{1+s_g}}$, \ \ \ for every $t>0$.
\end{lemma}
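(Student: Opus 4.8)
\emph{Proof proposal.} The entire lemma is an elementary consequence of rewriting the two-sided bound $i_g\le tg'(t)/g(t)\le s_g$ of \eqref{a(x)3} in multiplicative form; the plan is to establish part (1) for $g$ by a logarithmic change of variables, deduce part (1) for $G$ by integration, and obtain part (2) by inverting the monotone maps $g$ and $G$. For part (1) applied to $g$, I would set $\phi(s):=\ln g(e^s)$, which is well defined and $C^1$ on $\mathbb{R}$ since $g\in C^1(\mathbb{R}^+)$ and $g>0$ on $(0,\infty)$, and note $\phi'(s)=e^sg'(e^s)/g(e^s)\in[i_g,s_g]$. For $t>0$ and $\beta\ge1$,
\[
\ln\frac{g(\beta t)}{g(t)}=\phi(\ln t+\ln\beta)-\phi(\ln t)=\int_{\ln t}^{\ln t+\ln\beta}\phi'(s)\,ds,
\]
and since the interval of integration has nonnegative length $\ln\beta$ this quantity lies between $i_g\ln\beta$ and $s_g\ln\beta$; exponentiating gives $\beta^{i_g}\le g(\beta t)/g(t)\le\beta^{s_g}$. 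For $0<\beta<1$ the length $\ln\beta$ is negative, so the roles of the two bounds are exchanged, which yields $\beta^{s_g}\le g(\beta t)/g(t)\le\beta^{i_g}$.

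For part (1) applied to $G$, the change of variables $\tau=\beta\sigma$ in \eqref{g} gives $G(\beta t)=\beta\int_0^tg(\beta\sigma)\,d\sigma$. Substituting the bounds just proved for $g$, namely $\beta^{i_g}g(\sigma)\le g(\beta\sigma)\le\beta^{s_g}g(\sigma)$ when $\beta\ge1$ (and the reversed inequalities when $0<\beta<1$) and integrating in $\sigma$, one obtains $\beta^{1+i_g}G(t)\le G(\beta t)\le\beta^{1+s_g}G(t)$ for $\beta\ge1$, with the reversed chain for $0<\beta<1$. Equivalently, bounding $G(t)=\int_0^tg(\tau)\,d\tau$ via $g(t)(\tau/t)^{s_g}\le g(\tau)\le g(t)(\tau/t)^{i_g}$ for $\tau\le t$ yields $i_g+1\le tg(t)/G(t)\le s_g+1$, after which the logarithmic argument above applies verbatim to $G$.

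For part (2), I would first observe that $g$ is a continuous strictly increasing bijection of $[0,\infty)$ onto itself: $g(0)=0$, $g'>0$ on $(0,\infty)$ because $tg'(t)/g(t)\ge i_g>0$ and $g>0$ there, and $g$ is unbounded since $g(\beta t)\ge\beta^{i_g}g(t)\to\infty$ as $\beta\to\infty$; likewise $G$. Hence $g^{-1}$ and $G^{-1}$ exist and are increasing. Setting $s=g(t)$ in $g(\beta t)\ge\beta^{i_g}g(t)$ gives $g(\beta g^{-1}(s))\ge\beta^{i_g}s$, and applying $g^{-1}$ yields $\beta g^{-1}(s)\ge g^{-1}(\beta^{i_g}s)$; with $\lambda:=\beta^{i_g}\ge1$ this reads $g^{-1}(\lambda s)\le\lambda^{1/i_g}g^{-1}(s)$. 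The matching lower bound follows identically from $g(\beta t)\le\beta^{s_g}g(t)$ with $\lambda=\beta^{s_g}$, and the range $0<\lambda<1$ is recovered by feeding $1/\lambda$ and $\lambda s$ into the inequalities for $\lambda\ge1$. The same computation applied to the $G$-inequalities of the previous paragraph (with $1+i_g$, $1+s_g$ in place of $i_g$, $s_g$) gives the stated bounds for $G^{-1}$.

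I do not expect a genuine obstacle: the lemma is pure bookkeeping built on the single inequality $i_g\le tg'(t)/g(t)\le s_g$. The only points requiring care are keeping track of whether each dilation factor is $\ge1$ or $<1$ — crossing $1$ reverses every inequality — and checking at the outset that $g$ and $G$ really are increasing bijections of $[0,\infty)$, so that their inverses are available and monotone.
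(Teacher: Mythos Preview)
Your argument is correct and is the standard elementary proof of these scaling inequalities: integrate the pointwise bound $i_g\le tg'(t)/g(t)\le s_g$ after the logarithmic substitution to get part~(1) for $g$, pass to $G$ by integration (or by first showing $1+i_g\le tg(t)/G(t)\le 1+s_g$), and obtain part~(2) by inverting the monotone bijections. The paper itself does not give a proof at all---it simply records the lemma and refers to \cite[Lemma~3.1]{xiong2} for the argument---so there is nothing to compare against in the present manuscript; your write-up is a clean, self-contained substitute for that external citation, and the care you take over the sign of $\ln\beta$ and the bijectivity of $g,G$ covers exactly the points one needs.
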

It's  apparent that lemma \ref{ag} indicates that
\begin{equation}\label{lg}
L^{1+s_g}(\Omega)\subset L^{G}(\Omega) \subset L^{1+i_g}(\Omega) \subset L^1(\Omega)
\end{equation}
and $g(\cdot), g^{-1}(\cdot), G(\cdot), G^{-1}(\cdot)$ satisfy the global $\bigtriangleup_2$ condition.

Following this, we introduce a Sobolev-type embedding for the function $g$.
\begin{lemma}(see \cite{b13}, Proposition 3.4) \label{gudu}
Assume that $B_R(x_0) \subseteq \Omega$, and $g: [0, +\infty)\rightarrow [0, +\infty)$ is a positive increasing  function satisfying \eqref{a(x)3}. Then there exists a constant $c=c(n,i_g,s_g)$ such that
\begin{equation*}
\fint_{B_R}\left[ g\left(\frac{|u|}{R} \right) \right] ^{\frac{n}{n-1}}dx \leqslant c \left( \fint_{B_R}g(|Du|)dx\right) ^{\frac{n}{n-1}}
\end{equation*}
for every weakly differentiable function $u \in W_{0}^{1,g}(B_R(x_0))$.
\end{lemma}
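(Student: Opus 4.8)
The plan is to reduce the statement to the classical Sobolev inequality $\|v\|_{L^{n/(n-1)}(B_R)}\le c(n)\|Dv\|_{L^1(B_R)}$ by applying it to the composition $v:=g(|u|/R)$. By density we may assume $u\in C_c^{\infty}(B_R(x_0))$, so that $v\in W_0^{1,1}(B_R)$ and, by the chain rule, $|Dv|=R^{-1}g'(|u|/R)\,|Du|$ a.e.; the general case then follows by approximating $u$ in $W_0^{1,g}(B_R)$ and passing to the limit, which is legitimate since $G\in\vartriangle_2$. Rewriting the Sobolev inequality in averaged form over $B_R$ yields
\[
\Bigl(\fint_{B_R}\bigl[g(|u|/R)\bigr]^{\frac{n}{n-1}}\,dx\Bigr)^{\frac{n-1}{n}}\;\le\;c(n)\,R\fint_{B_R}|Dv|\,dx\;=\;c(n)\fint_{B_R}g'(|u|/R)\,|Du|\,dx .
\]

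The key step is the pointwise inequality $g'(|u|/R)\,|Du|\le c(i_g,s_g)\,\bigl[g(|Du|)+g(|u|/R)\bigr]$. To prove it, note first that $i_g\,g(t)/t\le g'(t)\le s_g\,g(t)/t$ by \eqref{a(x)3}, and that $t\mapsto g(t)/t$ is non-decreasing: for $0<a\le b$, Lemma \ref{ag} with $\beta=b/a$ gives $g(b)/g(a)\ge(b/a)^{i_g}\ge b/a$ since $i_g\ge1$. Now split $B_R$ according to whether $|Du|\ge|u|/R$ or $|Du|<|u|/R$. On the first set, monotonicity of $g(t)/t$ gives $g'(|u|/R)\le s_g\,g(|u|/R)\big/(|u|/R)\le s_g\,g(|Du|)/|Du|$, hence $g'(|u|/R)|Du|\le s_g\,g(|Du|)$; on the second set, $g'(|u|/R)|Du|\le g'(|u|/R)\,(|u|/R)\le s_g\,g(|u|/R)$. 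Plugging this bound into the previous display and integrating, we obtain
\[
\Bigl(\fint_{B_R}\bigl[g(|u|/R)\bigr]^{\frac{n}{n-1}}\,dx\Bigr)^{\frac{n-1}{n}}\le c\fint_{B_R}g(|Du|)\,dx+c\fint_{B_R}g(|u|/R)\,dx .
\]

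It remains to absorb the lower-order term $\fint_{B_R}g(|u|/R)\,dx$. This cannot be done directly by Young's inequality, since the constant produced by the Sobolev inequality is not small; instead we invoke the Orlicz--Poincar\'e inequality $\fint_{B_R}g(|u|/R)\,dx\le c(n,i_g,s_g)\fint_{B_R}g(|Du|)\,dx$, valid for $u\in W_0^{1,g}(B_R)$, which is a standard consequence of the fact that $G\in\vartriangle_2\cap\bigtriangledown_2$ (equivalently, of \eqref{a(x)3}) together with the comparison $g(t)\approx G(t)/t$ that is immediate from \eqref{g} and \eqref{a(x)3}. Substituting this into the last display and raising both sides to the power $n/(n-1)$ gives the assertion, with $c=c(n,i_g,s_g)$. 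The only genuinely non-routine ingredient is precisely this handling of the zeroth-order term: the presence of $\fint_{B_R}g(|u|/R)\,dx$ on the right forces one to supplement the Sobolev inequality with a separate scale-correct Poincar\'e-type estimate, after which the remaining arguments are elementary manipulations of the structural bounds \eqref{a(x)3} and Lemma \ref{ag}.
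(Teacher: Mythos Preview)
The paper does not prove this lemma; it is quoted verbatim from Baroni \cite{b13}, Proposition~3.4, so there is no in-paper argument to compare against. Your reduction to the classical Gagliardo--Nirenberg inequality applied to $v=g(|u|/R)$, followed by the pointwise bound $g'(|u|/R)\,|Du|\le s_g\bigl[g(|Du|)+g(|u|/R)\bigr]$ via the case split at $|Du|=|u|/R$, is the standard route and is correct.

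One point deserves tightening. Your justification of the Orlicz--Poincar\'e inequality $\fint_{B_R}g(|u|/R)\,dx\le c\fint_{B_R}g(|Du|)\,dx$ via ``$G\in\vartriangle_2\cap\bigtriangledown_2$ together with $g(t)\approx G(t)/t$'' is hand-wavy: the modular Poincar\'e for $G$ does not pass to $g$ through that comparison, and $g$ itself need not lie in $\bigtriangledown_2$ when $i_g=1$. The cleanest fix avoids a separate Poincar\'e altogether. Refine your case split at $b=\varepsilon a$ instead of $b=a$: for any $\varepsilon\in(0,1)$ one has
\[
g'(a)\,b\;\le\;\varepsilon\,s_g\,g(a)+s_g\,\varepsilon^{1-s_g}\,g(b),
\]
since on $\{b\le\varepsilon a\}$ the bound $g'(a)b\le s_g(g(a)/a)\cdot\varepsilon a=\varepsilon s_g g(a)$ holds, while on $\{b>\varepsilon a\}$ monotonicity of $g(t)/t$ and Lemma~\ref{ag} give $g'(a)\le s_g g(a)/a\le s_g g(b/\varepsilon)/(b/\varepsilon)\le s_g\varepsilon^{1-s_g}g(b)/b$. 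Combining this with H\"older's inequality $\fint_{B_R}g(|u|/R)\,dx\le\bigl(\fint_{B_R}[g(|u|/R)]^{n/(n-1)}dx\bigr)^{(n-1)/n}$ lets you absorb the lower-order term directly into the left-hand side for $\varepsilon$ small enough depending only on $n$ and $s_g$, and the proof closes with $c=c(n,i_g,s_g)$ as claimed.
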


The subsequent lemma presents  Lipschitz regularity and excess decay estimates for homogeneous equations with constant coefficients.
\begin{lemma}(see \cite{b13}, Lemma 4.1)\label{zcth}
If $ w\in W_{loc}^{1,G}(\Omega)$ is a local weak solution of
\begin{equation*}
-\operatorname{div}\left(  a(Dw)\right) =0 \ \ \ \ \ \ in\ \ \ \Omega,
\end{equation*}
 where    $a(x,\eta)=a(\eta)$ satisfies  the assumptions \eqref{a(x)1} and \eqref{a(x)3}. For every ball $B_{R}(x_0)\subseteq \Omega$ ,then we have the following $De \ Giorgi \  type$ estimate:
\begin{equation*}
\sup_{B_{\frac{R}{4}}(x_0)}|Dw| \leqslant c_1\fint_{B_R(x_0)}|Dw|dx.
\end{equation*}
Moreover,  there exist constant $\beta \in (0,1)$ such that

\begin{equation*}
\fint_{B_\rho(x_0)} \vert Dw-(Dw)_{B_\rho(x_0)}\vert \operatorname{d}\!\xi\leq c_2 \left( \frac{\rho}{R}\right) ^\beta \fint_{B_R(x_0)} \vert Dw-(Dw)_{B_R(x_0)}\vert \operatorname{d}\!\xi,
\end{equation*}
\begin{equation*}
|Dw(x_1)-Dw(x_2)| \leqslant c_3 \left( \frac{\rho}{R}\right) ^\beta \fint_{B_R(x_0)} \vert Dw \vert \operatorname{d}\!\xi
\end{equation*}
where $0<\rho\leqslant R$, $ x_1, x_2 \in B_{\frac{\rho}{2}}(x_0).$ The exponent $\beta$ and the constants $c_1,c_2,c_3$ share the same dependence on $data$.
\end{lemma}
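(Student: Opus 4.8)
The plan is to reduce everything to the regularity theory for homogeneous equations with Orlicz growth, which is already available in the literature (Lieberman \cite{l1}, \cite{b13}). The first step is to record the local Lipschitz bound: since $w$ is a local weak solution of $-\operatorname{div}(a(Dw))=0$ with $a$ satisfying \eqref{a(x)1} and \eqref{a(x)3}, the De Giorgi--Nash--Moser machinery adapted to Orlicz growth (see \cite{l1}, or \cite[Thm.~1.7]{b13}) shows that $Dw$ is locally bounded, and a standard Caccioppoli-plus-iteration argument gives the sup bound
\[
\sup_{B_{R/4}(x_0)}|Dw|\le c_1\fint_{B_R(x_0)}|Dw|\,d\xi,
\]
with $c_1$ depending only on $data$. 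One first proves this with the $L^G$-average on the right, then downgrades to the $L^1$-average using the $\vartriangle_2$/$\bigtriangledown_2$ property of $G$ recorded in \eqref{lg} and the reverse-Hölder-type self-improvement; alternatively one quotes it directly from \cite[Lemma~4.1]{b13}, which is exactly the reference being cited.

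The second step is the oscillation decay for $Dw$. Differentiating the equation (which is legitimate since $Dw\in W^{1,2}_{loc}$ by the nondegenerate ellipticity in \eqref{a(x)1} together with \eqref{vgz1}), each partial derivative $D_kw$ solves a linear, uniformly elliptic equation in divergence form on the set where $|Dw|$ is comparable to a constant; combined with the Lipschitz bound of Step 1, De Giorgi's theorem yields a $C^{0,\beta}_{loc}$ estimate for $Dw$ with some $\beta=\beta(data)\in(0,1)$. In scaled form on $B_{R/4}(x_0)$ this reads
\[
\operatorname*{osc}_{B_\rho(x_0)}Dw\le c\left(\frac{\rho}{R}\right)^{\beta}\sup_{B_{R/4}(x_0)}|Dw|,
\]
and feeding in Step 1 gives the third displayed inequality for $x_1,x_2\in B_{\rho/2}(x_0)$. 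To pass to the mean-oscillation form, note that for any $\rho\le R$ one has, by \eqref{1.8} and the definition of the average,
\[
\fint_{B_\rho(x_0)}|Dw-(Dw)_{B_\rho(x_0)}|\,d\xi\le 2\operatorname*{osc}_{B_\rho(x_0)}Dw,
\]
while on the other side $\sup_{B_{R/4}(x_0)}|Dw|\lesssim \fint_{B_R(x_0)}|Dw-(Dw)_{B_R(x_0)}|\,d\xi$ follows from the Lipschitz bound applied to the equation solved by $w-(Dw)_{B_R(x_0)}\cdot(x-x_0)$ — this is again a homogeneous equation of the same structural type after the affine change of unknown, so Step 1 applies with the same constant. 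Chaining these three facts produces the second displayed inequality with a constant $c_2$ depending only on $data$; a dyadic interpolation handles the range $R/4<\rho\le R$ trivially by absorbing it into the constant.

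The main obstacle is the differentiated-equation argument in Step 2: one must justify $Dw\in W^{1,2}_{loc}$ and that the frozen linear equation is uniformly elliptic with ellipticity ratio controlled by $data$ only (not by $\sup|Dw|$), which is where the two-sided bounds $1\le i_g\le s_g<\infty$ in \eqref{a(x)3} are essential — they guarantee $g(t)/t$ is doubling in both directions so that $D_\eta a$ behaves like a uniformly elliptic matrix after normalization. All of this is carried out in \cite{b13} and \cite{l1}; since the statement here is quoted verbatim as \cite[Lemma~4.1]{b13}, the cleanest route is simply to invoke that reference, and the sketch above explains why it holds.
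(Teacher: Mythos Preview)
The paper does not prove this lemma at all; it is stated with the attribution ``(see \cite{b13}, Lemma 4.1)'' and no argument is given. Your concluding recommendation---to simply invoke \cite[Lemma~4.1]{b13}---is therefore exactly what the paper does.

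That said, the sketch you offer for the second displayed (Campanato) estimate contains a genuine gap. You claim that
\[
\sup_{B_{R/4}(x_0)}|Dw|\lesssim \fint_{B_R(x_0)}|Dw-(Dw)_{B_R(x_0)}|\,d\xi
\]
follows from applying Step~1 to $\tilde w:=w-(Dw)_{B_R(x_0)}\cdot(x-x_0)$, asserting that $\tilde w$ solves ``a homogeneous equation of the same structural type''. This fails twice. First, the inequality as written is plainly false (take $Dw$ constant and nonzero); presumably you intend $\sup_{B_{R/4}}|Dw-(Dw)_{B_R}|$ on the left. Second---and this is the real issue---the shifted vector field $\tilde a(\eta):=a(\eta+p)$ with $p=(Dw)_{B_R}$ does \emph{not} satisfy \eqref{a(x)1} with the same $g$: its ellipticity and growth are governed by $g(|\eta+p|)/|\eta+p|$ rather than $g(|\eta|)/|\eta|$, and $\tilde a(0)=a(p)\neq0$ in general. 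Hence Step~1 is not available for $\tilde w$ with constants depending only on $data$. The argument can be salvaged via the machinery of shifted $N$-functions (cf.\ \cite{de1}), which shows that the relevant $\vartriangle_2/\bigtriangledown_2$ indices are preserved under such shifts, but this is nontrivial and is precisely part of what \cite{b13} and the references therein carry out; it cannot be waved through as ``the same structural type''.
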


\section{Comparison estimates and regularity results }\label{section3}

In this section we want to obtain some comparison estimates between   the solutions to double obstacle problems and  to homogeneous elliptic equations.  Hence, a corresponding excess decay estimate can be achieved for solutions of double obstacle problems with measure data. Primarily, we will demonstrate a comparison estimate between solutions of double obstacle problems with measure data and those of single obstacle problems.

We introduce three functions that are directly dependent on  $g$:
\begin{equation*}
f_{\chi}(t):=\int_{0}^{t}\left[ \frac{g(s)}{s}\right]^{1+\chi}ds, \ \ \ g_{\chi}(t):=\left[ \frac{g(t)}{t}\right]^{1+\chi}t, \ \ \ h_{\chi}(t):=\frac{g_{\chi}(t)}{t},
\end{equation*}
for $\chi\geqslant-1$. It's obvious that $g_{\chi}(\cdot)$ and $h_{\chi}(\cdot)$ are increasing and satisfy $\bigtriangleup_2$ condition, therefore by Remark \ref{remark1} to get $$g_{\chi}(t+s)\leqslant c[g_{\chi}(t)+g_{\chi}(s)], \ \ \ h_{\chi}(t+s)\leqslant c[h_{\chi}(t)+h_{\chi}(s)].$$

\begin{lemma}\label{dudw1-}
Assume that conditions  \eqref{a(x)1}-\eqref{a(x)3} are fulfilled, let $B_{2R}(x_0)\subset \Omega,  f \in L^{1}(B_R(x_0))\cap (W^{1,G}(B_R(x_0)))'$ and the map $u\in W^{1,G}(B_R(x_0))$ with $\psi_2 \geqslant u \geqslant \psi_1$ solves the variational inequality
\begin{equation}\label{bju-}
\int_{B_R(x_0)} a(x,Du)\cdot D(v-u)dx \geq \int_{B_R(x_0)} f(v-u) dx
\end{equation}
for any  $v \in u+W_0^{1,G}(B_R(x_0))$ that  satisfy $\psi_2 \geqslant  v \geqslant \psi_1 $ a.e.  in $B_R(x_0)$.
Let $w_0 \in u+W_{0}^{1,G}(B_R(x_0))$ with $w_0 \geq \psi_1 $ be the weak solution of the single obstacle problem
\begin{equation}\label{bjw1-}
\int_{B_R(x_0)} a(x,Dw_0)\cdot D(v-w_0)dx \geqslant \int_{B_R(x_0)} a(x,D\psi_2)\cdot D(v-w_0)dx
\end{equation}
for any  $v \in w_0+W_0^{1,G}(B_R(x_0))$ that  satisfy $ v \geqslant \psi_1 $ a.e.  in $B_R(x_0)$.
Then we obtain
\begin{equation}\label{ga0-}
\fint_{B_{R}(x_0)}g_{\chi}(|Du-Dw_0|)dx\leqslant c_{1}g_{\chi}(A_0),
\end{equation}
\begin{equation}\label{ha0-}
\fint_{B_{R}(x_0)}h_{\chi}(|Du-Dw_0|)dx\leqslant c_{1}h_{\chi}(A_0),
\end{equation}
\begin{equation}\label{ga01-}
\fint_{B_R(x_0)}[g(|Du-Dw_0|)]^{\xi}dx\leqslant c_{2}\left[ R\fint_{B_R(x_0)}|f|dx+\frac{D\Psi_2(B_R(x_0))}{R^{n-1}} \right] ^{\xi}
\end{equation}
for
$$A_0:=g^{-1}\left(R\fint_{B_R(x_0)}|f|dx+\frac{D\Psi_2(B_R(x_0))}{R^{n-1}} \right),$$
$$ \chi \in \left[-1, \min \left\lbrace \frac{1}{s_g-1}, \frac{s_g}{(s_g-1)(n-1)}\right\rbrace   \right),  \xi \in \left[ 1, \min \left\lbrace \frac{s_g+1}{s_g}, \frac{n}{n-1}\right\rbrace  \right)   $$ and with constants $c_1=c_1(data,\chi),  c_2=c_2(data,\xi)$.
\end{lemma}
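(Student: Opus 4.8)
The plan is to use $w_0$ itself as a competitor in the variational inequality \eqref{bju-} for $u$ and, symmetrically, to use $u$ (together with a corrective term forcing admissibility) as a competitor in \eqref{bjw1-} for $w_0$; then to add the two resulting inequalities to cancel the bilinear ``cross'' terms and isolate $\int_{B_R(x_0)} [a(x,Du)-a(x,Dw_0)]\cdot(Du-Dw_0)\,dx$. The key point is admissibility: since $\psi_1\le u\le\psi_2$ and $w_0\ge\psi_1$, the choice $v=w_0$ is legitimate in \eqref{bju-} only if $w_0\le\psi_2$; this is exactly the role of the obstacle problem \eqref{bjw1-} with right-hand side $\operatorname{div}(a(x,D\psi_2))$, whose solution $w_0$ satisfies $w_0\le\psi_2$ by the comparison principle for obstacle problems (the ``upper obstacle'' $\psi_2$ is itself a supersolution of the relevant equation on the coincidence side). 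Conversely, in \eqref{bjw1-} one may take $v=u$ because $\psi_1\le u$. Adding the two inequalities gives
\begin{equation}\label{comp-key}
\int_{B_R(x_0)} [a(x,Du)-a(x,Dw_0)]\cdot(Du-Dw_0)\,dx \leqslant \int_{B_R(x_0)} f\,(w_0-u)\,dx + \int_{B_R(x_0)} a(x,D\psi_2)\cdot(Du-Dw_0)\,dx,
\end{equation}
and the last term is rewritten via integration by parts (valid since $\operatorname{div}(a(x,D\psi_2))\in L^1_{loc}$ and $u-w_0\in W_0^{1,G}$) as $-\int_{B_R(x_0)}\operatorname{div}(a(x,D\psi_2))\,(u-w_0)\,dx$.

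Next I would estimate the right-hand side of \eqref{comp-key}. Using the Sobolev-type embedding of Lemma \ref{gudu} applied to $u-w_0\in W_0^{1,G}(B_R(x_0))$, together with $G$'s $\triangle_2\cap\nabla_2$ properties and Young's inequality (Lemma \ref{gyoung}), one absorbs the gradient term $g(|Du-Dw_0|)$ into the left-hand side (which by \eqref{gvgs} controls $\fint G(|Du-Dw_0|)$ from below up to a constant). More precisely, bounding $\fint_{B_R}|u-w_0|$ by $R\big(\fint_{B_R} g(|Du-Dw_0|)\,dx\big)/$ (after inverting $g$ through Lemma \ref{gudu} and Jensen) and pairing it against $\fint_{B_R}(|f|+R^{-1}|\operatorname{div} a(x,D\psi_2)|)$, the standard Young-inequality absorption yields
\[
\fint_{B_R(x_0)} G(|Du-Dw_0|)\,dx \;\lesssim\; G(A_0), \qquad A_0 = g^{-1}\!\Big(R\fint_{B_R(x_0)}|f|\,dx + \tfrac{D\Psi_2(B_R(x_0))}{R^{n-1}}\Big),
\]
which is the case $\chi=-1$ of \eqref{ga0-}. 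The higher-integrability estimates \eqref{ga0-}--\eqref{ha0-} for $\chi>-1$ then follow by a standard Gehring-type self-improving argument: one derives a reverse-Hölder inequality for $g(|Du-Dw_0|)/|Du-Dw_0|\cdot|Du-Dw_0|$ on sub-balls (using the Caccioppoli/Sobolev-Poincaré machinery behind Lemma \ref{gudu} and the comparison inequality on every ball $B_r\subseteq B_R$), and the exponent ranges for $\chi$ and $\xi$ are precisely those for which the relevant Sobolev exponent $\tfrac{n}{n-1}$ and the $g$-growth exponent $s_g$ keep the iteration admissible. Estimate \eqref{ga01-} is the restatement of \eqref{ga0-} with $\chi=-1$ after applying the $\triangle_2$ property of $G$ and $g$ and Jensen's inequality to pass from $\fint G(|Du-Dw_0|)$ to $(\fint [g(|Du-Dw_0|)]^\xi)^{1/\xi}$, again using Lemma \ref{ag} to track the precise exponents and \eqref{a(x)4}.

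The main obstacle is the admissibility argument $w_0\le\psi_2$: one must justify carefully that the solution of the single (lower) obstacle problem \eqref{bjw1-} with forcing $\operatorname{div}(a(x,D\psi_2))$ stays below $\psi_2$, so that $w_0$ is a legal test function in \eqref{bju-}. This is a comparison principle: testing \eqref{bjw1-} with $v=w_0-(w_0-\psi_2)_+\in w_0+W_0^{1,G}(B_R)$ (which is $\ge\psi_1$ since $w_0\ge\psi_1$ and the subtracted term is supported where $w_0>\psi_2\ge\psi_1$), one gets $\int [a(x,Dw_0)-a(x,D\psi_2)]\cdot D(w_0-\psi_2)_+\,dx\le 0$, and monotonicity \eqref{vgs} forces $(w_0-\psi_2)_+=0$ a.e. A secondary technical point is ensuring all integration-by-parts and truncation manipulations are legitimate in the Orlicz-Sobolev setting with merely $L^1$ measure-type data on the right; here one works first with the smooth approximations $f_i$ from Definition \ref{opdy} and passes to the limit, exactly as in \cite{xiong1,xiong2}, so I would either cite those references for the approximation scheme or carry out the estimates at the level of $f_i$ and then let $i\to\infty$ using lower semicontinuity of $\fint G(|Du-Dw_0|)$.
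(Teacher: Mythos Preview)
Your admissibility argument $w_0\le\psi_2$ via testing \eqref{bjw1-} with $v=w_0-(w_0-\psi_2)_+$ is exactly what the paper does, and your reduction to \eqref{comp-key} is correct in the \emph{fast growth} regime $\int^{\infty}(s/G(s))^{1/(n-1)}\,ds<\infty$, where $u-w_0\in L^\infty$ by Sobolev embedding and the paper argues essentially as you propose (testing with convex combinations and absorbing via $\|Du-Dw_0\|_{L^G}$).

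The genuine gap is the \emph{slow growth} case $\int^{\infty}(s/G(s))^{1/(n-1)}\,ds=\infty$. There $u-w_0\notin L^\infty$ in general, and since $f$ (and $\operatorname{div}a(x,D\psi_2)$) is only $L^1$, the pairing $\int_{B_R}(|f|+|\operatorname{div}a(x,D\psi_2)|)\,|u-w_0|\,dx$ cannot be controlled by any Sobolev norm of $u-w_0$ that you can absorb into the left side. The Orlicz--Sobolev inequality of Lemma~\ref{gudu} gives control of $g(|u-w_0|/R)$ in $L^{n/(n-1)}$, not of $|u-w_0|$ in $L^\infty$, so H\"older against $L^1$ data fails. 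The paper (following Baroni) does \emph{not} test with $v=w_0$ and $v=u$ here; after rescaling so that $\int_{B_1}|\bar f|+D\overline{\Psi_2}(B_1)=1$, it tests with \emph{truncations} $T_k\bigl((\overline{w_0}-\overline u)/(c_n\mathcal F)\bigr)c_n\mathcal F$ and $\Phi_k(\cdot)$ on the successive level sets $C_k,D_k$, obtaining $\int_{C_k}\overline G(|D\overline u-D\overline{w_0}|)\le ck\mathcal F$ and $\int_{D_k}\overline G(\cdot)\le c\mathcal F$; summing these with the weight $[\,\overline g(s)/s\,]^{1+\chi}$ over $k$ (Step~2.1 of \cite[Lemma~5.1]{b13}) gives $\int_{B_1}\overline g_\chi(|D\overline u-D\overline{w_0}|)\le c$ directly for every admissible $\chi$. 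The exponent thresholds for $\chi$ and $\xi$ come out of this layer--cake summation, not from a self-improvement step.

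Relatedly, your Gehring-type plan for $\chi>-1$ cannot work as stated: $w_0$ is the solution of a boundary-value problem on the \emph{fixed} ball $B_R(x_0)$, so there is no reverse-H\"older inequality for $|Du-Dw_0|$ on sub-balls with the same $w_0$, and changing $w_0$ with the ball destroys the iteration. The paper never self-improves; the truncation argument delivers the full range of $\chi$ (and then \eqref{ha0-}, \eqref{ga01-} as in \cite[Corollary~5.2, Lemma~5.3]{b13}) in one stroke.
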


\begin{proof}
Since $w_0 \in u+W_0^{1,G}(B_R(x_0)), u \leqslant \psi_2$ a.e. in $B_R(x_0)$, we consequently deduce $(w_0-\psi_2)_+ \in W_0^{1,G}(B_R(x_0))$. Subsequently, we choose $v=\min \{w_0,\psi_2\}=w_0-(w_0-\psi_2)_+ \in w_0+ W_0^{1,G}(B_R(x_0))$ with $v \geqslant \psi_1$ as comparison functions in \eqref{bjw1-},   and it can be inferred from \eqref{gvgs} that
\begin{eqnarray*}
\int_{B_R(x_0)}G(|D(w_0-\psi_2)_+|)dx &\leqslant& c \int_{B_R(x_0)}[a(x,Dw_0)-a(x,D\psi_2)] \cdot D[(w_0-\psi_2)_+]dx \\
&\leqslant& 0.
\end{eqnarray*}
In view of the fact that $G(\cdot)$ is increasing over $[0,+\infty)$ and $G(0)=0$,  we can conclude that
$$D(w_0-\psi_2)_+=0 \ \ a.e. \ \ \ in \ \ B_R(x_0).$$
Together with  $(w_0-\psi_2)_+=0\ \ \  on \ \ \partial B_R(x_0)$, which implies
$$(w_0-\psi_2)_+=0 \ \ \ a.e.\ \ in \ \ B_R(x_0).$$
This indicates that
$w_0 \leqslant \psi_2$ a.e. in $B_R(x_0)$.

Without loss of generality we may assume that $A_0>0$, otherwise, by \eqref{vgz1} and \eqref{vgs} to get $u=w_0$ in $B_R(x_0)$. So we define
$$\overline{u}(x)=\frac{u(x_0+Rx)}{A_0R}, \ \ \ \overline{w_0}(x)=\frac{w_0(x_0+Rx)}{A_0R}, \ \ \ \overline{a}(x,z)=\frac{a(x_0+Rx,A_0z)}{g(A_0)},$$
$$\overline{g}(x)=\frac{g(A_0x)}{g(A_0)}, \ \ \ \overline{f}(x)=R\frac{f(x_0+Rx)}{g(A_0)},  \ \ \ \overline{G}(t)=\int_{0}^{t}\tau\overline{a}(\tau)d\tau,$$
$$\overline{\psi_1}(x)=\frac{\psi_1(x_0+Rx)}{A_0R}, \ \  \overline{\psi_2}(x)=\frac{\psi_2(x_0+Rx)}{A_0R}, \ \ D\overline{\Psi_2}(B_1):=\int_{B_1}|\operatorname{div}\left(\overline{a}(x,D\overline{\psi_2})\right) |dx,$$
Consequently, after appropriate rescaling,  we deduce  $\overline{u}\in W^{1,G}(B_1)$ with $\overline{\psi_2} \geqslant \overline{u} \geqslant\overline{\psi_1}$ solves the variational inequality
\begin{equation}\label{bar1-}
\int_{B_1} \overline{a}(x,D\overline{u})\cdot D(\overline{v}-\overline{u})dx \geq \int_{B_1} \overline{f}(\overline{v}-\overline{u}) dx
\end{equation}
for any  $\overline{v}\in \overline{u}+W_0^{1,G}(B_1)$ that  satisfy $\overline{\psi_2} \geqslant  \overline{v} \geqslant \overline{\psi_1}$ a.e.  in $B_1$.
And $\overline{w}_0 \in \overline{u}+W_{0}^{1,G}(B_1)$ with $\overline{w}_0 \geq \overline{\psi_1}$ solves the variational inequality
\begin{equation}\label{bar2-}
\int_{B_1} \overline{a}(x,D\overline{w}_0)\cdot D(\overline{v}-\overline{w}_0)dx \geqslant \int_{B_1} \overline{a}(x,D\overline{\psi_2})\cdot D(\overline{v}-\overline{w}_0)dx
\end{equation}
for any  $\overline{v}\in \overline{w_0}+W_0^{1,G}(B_1)$ that  satisfy $\overline{v}\geq \overline{\psi_1}$ a.e.  in $B_1$.
Moreover, through a series of calculations, we can deduce
$$D_{\eta} \overline{a}(x,\eta )\lambda \cdot \lambda \geqslant l\dfrac{\overline{g}(|\eta|)}{|\eta|}|\lambda|^2, \ \ \ \ \ i_g\leqslant \frac{t\overline{g}'(t)}{\overline{g}(t)}\leqslant s_g,$$
\begin{eqnarray*}
\int_{B_1}|\overline{f}|dx+D\overline{\Psi_2}(B_1)=1.
\end{eqnarray*}
Following this, we investigate two cases, starting with the case of slow growth:
$$\int^{\infty}\left( \frac{s}{\overline{G}(s)}\right)^{\frac{1}{n-1}}ds=\infty.$$
We define
\begin{equation*}
F_{\chi}(t):=
\left\{\begin{array}{r@{\ \ }c@{\ \ }ll}
0\ \ \ \ \ \ \ \ \  \ \  if\ \ t=0\,, \\[0.05cm]
\overline{f_{\chi}}(1)t\ \ \ \ \ \ \ if\ \ t\in(0,1)\,, \\[0.05cm]
\overline{f_{\chi}}(t)\ \ \ \ \ \ \ if\ \ t\in[1,\infty)\,, \\[0.05cm]
\end{array}\right.
\end{equation*}
$$\overline{f_{\chi}}(t):=\int_{0}^{t}\left[ \frac{\overline{g}(s)}{s}\right]^{1+\chi}ds,
\ \ \  \Phi_k(t):=T_1(t-T_k(t)),
$$
$$\mathcal{F}:=\left( \int_{B_1}F_{\chi}(|D\overline{u}-D\overline{w_0}|)dx\right)^{\frac{1}{n}},  $$
where $T_k(t)$ is as in \eqref{tks}. Now we take $$\overline{v_1}=\overline{u}+T_k\left( \frac{\overline{w_0}-\overline{u}}{c_n\mathcal{F}}\right)c_n\mathcal{F}$$ and $$\overline{v_2}=\overline{w_0}+T_k\left( \frac{\overline{u}-\overline{w_0}}{c_n\mathcal{F}}\right)c_n\mathcal{F},$$
which satisfy $\overline{\psi_2} \geqslant \overline{v_1} \geqslant  \overline{\psi_1}$ and $\overline{v_2} \geqslant  \overline{\psi_1}$ a.e. in $B_1$,
as comparison functions in the inequalities \eqref{bar1-} and \eqref{bar2-} separately, then by \eqref{gvgs}  to get
\begin{eqnarray*}
\int_{C_k}\overline{G}(|D\overline{u}-D\overline{w_0}|)dx &\leqslant& c \int_{B_1}[\overline{a}(x,D\overline{u})-\overline{a}(x,D\overline{w_0})]\cdot DT_k\left(\frac{\overline{u}-\overline{w_0}}{c_n \mathcal{F}} \right)c_n\mathcal{F} dx \\
&\leqslant &c \int_{B_1} \left[ \overline{|f|} +|\operatorname{div}\overline{a}(x,D\overline{\psi_2})|\right] T_k\left(\frac{\overline{u}-\overline{w_0}}{c_n \mathcal{F}} \right)c_n\mathcal{F} dx \\
&\leqslant & ck\mathcal{F}\left[ \int_{B_1}|\overline{f}|dx+D\overline{\Psi_2}(B_1)\right] \\
&\leqslant & ck\mathcal{F},
\end{eqnarray*}
where $$C_k:= \left\lbrace x\in B_1: \frac{|\overline{u}-\overline{w_0}|}{c_n \mathcal{F}} \leqslant k \right\rbrace. $$
In a similar manner, we choose $$\overline{v_1}=\overline{u}+\Phi_k\left( \frac{\overline{w_0}-\overline{u}}{c_n\mathcal{F}}\right)c_n\mathcal{F}$$ and $$\overline{v_2}=\overline{w_0}+\Phi_k\left( \frac{\overline{u}-\overline{w_0}}{c_n\mathcal{F}}\right)c_n\mathcal{F},$$
which satisfy $\overline{\psi_2} \geqslant \overline{v_1} \geqslant  \overline{\psi_1}$ and $\overline{v_2} \geqslant  \overline{\psi_1}$ a.e. in $B_1$,  as test functions, then we infer
\begin{equation*}
\int_{D_k}\overline{G}(|D\overline{u}-D\overline{w_0}|)dx\leqslant c\mathcal{F},
\end{equation*}
where
$$D_k:= \left\lbrace x\in B_1: k<\frac{|\overline{u}-\overline{w_0}|}{c_n \mathcal{F}} \leqslant k+1 \right\rbrace. $$
Therefore, we obtain (see Step 2.1 of Lemma 5.1 in \cite{b13})
\begin{equation*}
\int_{B_1}\overline{g}_{\chi}(|D\overline{u}-D\overline{w_0}|)dx\leqslant c,
\end{equation*}
where $\overline{g}_{\chi}(t):=\left[ \frac{\overline{g}(t)}{t}\right]^{1+\chi}t.$

For the fast growth case:
$$\int^{\infty}\left( \frac{s}{\overline{G}(s)}\right)^{\frac{1}{n-1}}ds<\infty,$$
we take $\overline{v_1}=\overline{u}+\frac{\overline{w_0}-\overline{u}}{2}\geqslant \overline{\psi_1}$ and $\overline{v_2}=\overline{w_0}+\frac{\overline{u}-\overline{w_0}}{2}\geqslant \overline{\psi_1}$, which satisfy $\overline{\psi_2} \geqslant \overline{v_1} \geqslant  \overline{\psi_1}$ and $\overline{v_2} \geqslant  \overline{\psi_1}$ a.e. in $B_1$,  as comparison functions in the inequalities \eqref{bar1-} and \eqref{bar2-} separately, then by Sobolev's embedding (see Proposition 3.3 in \cite{b13}) to get
\begin{eqnarray*}
\int_{B_1}\overline{G}(|D\overline{u}-D\overline{w_0}|)dx&\leqslant& c\int_{B_1}\left[ \overline{|f|} +|\operatorname{div}\overline{a}(x,D\overline{\psi_2})|\right] |\overline{u}-\overline{w_0}|dx \\
&\leqslant& c||D\overline{u}-D\overline{w_0}||_{L^{G}(B_1)}.
\end{eqnarray*}
Consequently, we derive (see Step 2.2 of Lemma 5.1 in \cite{b13})
\begin{equation*}
\int_{B_1}\overline{g}_{\chi}(|D\overline{u}-D\overline{w_0}|)dx\leqslant c,
\end{equation*}
and we obtain \eqref{ga0-}.
Likewise, we derive \eqref{ha0-} and \eqref{ga01-}; for a  detailed proofing process, refer to Corollary 5.2 and Lemma 5.3 in \cite{b13}.
\end{proof}

\begin{corollary}\label{dudw1c}
Assume that conditions  \eqref{a(x)1}-\eqref{a(x)3} are fulfilled, let $w_0$ be as in Lemma \ref{dudw1-} and $\mu \in  \mathcal{M}_{b}(\Omega)$ and u be a limit of approximating solutions for $OP(\psi_1; \psi_2; \mu)$, in the sense of Definition \ref{opdy}. Then we derive
\begin{equation*}
\fint_{B_{R}(x_0)}g_{\chi}(|Du-Dw_0|)dx\leqslant c_{1}\left[ g_{\chi}(A_1)+g_{\chi}(A_3)\right],
\end{equation*}
\begin{equation*}
\fint_{B_{R}(x_0)}h_{\chi}(|Du-Dw_0|)dx\leqslant c_{1}\left[ h_{\chi}(A_1)+h_{\chi}(A_3)\right],
\end{equation*}
\begin{equation*}
\fint_{B_R(x_0)}[g(|Du-Dw_0|)]^{\xi}dx\leqslant c_{2}\left[ \frac{|\mu|(\overline{B_{R}(x_0)})}{R^{n-1}}+\frac{D\Psi_2(B_{R}(x_0))}{R^{n-1}}\right] ^{\xi}
\end{equation*}
for
$$A_1:=g^{-1}\left( \frac{|\mu|(\overline{B_R(x_0)})}{R^{n-1}}\right), \ \ \ \ A_3:=g^{-1}\left( \frac{D\Psi_2(B_R(x_0))}{R^{n-1}}\right)$$
and $ \chi, \xi,c_1,c_2$ are as in Lemma \ref{dudw1-}.
\end{corollary}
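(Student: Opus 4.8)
The plan is to deduce the corollary from Lemma \ref{dudw1-} applied along an approximating sequence, and then pass to the limit. First I would fix the data $u_i \in W^{1,G}(\Omega)$, with $\psi_2 \geqslant u_i \geqslant \psi_1$, and $f_i \in (W^{1,G}(\Omega))' \cap L^{1}(\Omega)$ furnished by Definition \ref{opdy}, so that $f_i \stackrel{\ast}{\rightharpoonup}\mu$, $\limsup_{i\to\infty}\int_{B_R(x_0)}|f_i|\,dx \leqslant |\mu|(\overline{B_R(x_0)})$, and $u_i \to u$ both in $W^{1,1}(\Omega)$ and a.e. in $\Omega$. For each $i$ let $w_{0,i}\in u_i + W_0^{1,G}(B_R(x_0))$, $w_{0,i}\geqslant\psi_1$, be the unique weak solution of the single obstacle problem \eqref{bjw1-} with boundary datum $u_i$ in place of $u$. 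Applying Lemma \ref{dudw1-} to the triple $(u_i,f_i,w_{0,i})$ on $B_R(x_0)$ then gives, with constants $c_1,c_2$ and exponents $\chi,\xi$ independent of $i$,
\begin{equation*}
\fint_{B_R(x_0)}g_\chi(|Du_i-Dw_{0,i}|)\,dx \leqslant c_1 g_\chi(A_{0,i}),\qquad \fint_{B_R(x_0)}h_\chi(|Du_i-Dw_{0,i}|)\,dx \leqslant c_1 h_\chi(A_{0,i}),
\end{equation*}
together with the corresponding $[g(\cdot)]^\xi$--bound, where $A_{0,i}:=g^{-1}\big(R\fint_{B_R(x_0)}|f_i|\,dx + D\Psi_2(B_R(x_0))/R^{n-1}\big)$.

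Next I would control the right-hand sides uniformly in $i$. Since $R\fint_{B_R(x_0)}|f_i|\,dx = c(n)\,R^{1-n}\int_{B_R(x_0)}|f_i|\,dx$, the hypothesis on $\limsup_i\int_{B_R(x_0)}|f_i|\,dx$ gives $\limsup_{i\to\infty}R\fint_{B_R(x_0)}|f_i|\,dx \leqslant c(n)\,|\mu|(\overline{B_R(x_0)})/R^{n-1}$. Because $g^{-1}$ is increasing and satisfies the $\bigtriangleup_2$ condition (Lemma \ref{ag}, \eqref{lg}), it follows that $\limsup_{i\to\infty}A_{0,i}\leqslant c\,(A_1+A_3)$; using then that $g_\chi$ and $h_\chi$ are continuous, increasing, and satisfy $\bigtriangleup_2$, together with Remark \ref{remark1}, I obtain $\limsup_i g_\chi(A_{0,i})\leqslant c_1[g_\chi(A_1)+g_\chi(A_3)]$ and $\limsup_i h_\chi(A_{0,i})\leqslant c_1[h_\chi(A_1)+h_\chi(A_3)]$, while the elementary inequality $(a+b)^\xi\leqslant 2^{\xi-1}(a^\xi+b^\xi)$ puts the third right-hand side into the required form. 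It then remains to take the limit on the left-hand sides.

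This last step is where the real work lies: I must show that $Dw_{0,i}\to Dw_0$ a.e. in $B_R(x_0)$ along a subsequence, where $w_0$ is the solution of \eqref{bjw1-} attached to the limit map $u$ as in Lemma \ref{dudw1-} --- equivalently, I need stability of the single obstacle problem under the $W^{1,1}$--convergence $u_i\to u$ of the boundary data. The strategy is to combine \eqref{ga0-} for $(u_i,f_i,w_{0,i})$ with the $W^{1,1}$--boundedness of $\{Du_i\}$ and the uniform bound on $g_\chi(A_{0,i})$ just obtained to get that $\{w_{0,i}\}$ is bounded in $W^{1,G}(B_R(x_0))$ (using $L^{G}\hookrightarrow L^1$ from \eqref{lg}); along a subsequence $w_{0,i}\rightharpoonup\tilde w$ in $W^{1,G}(B_R(x_0))$ with $\tilde w\in u+W_0^{1,G}(B_R(x_0))$, $\tilde w\geqslant\psi_1$, and a Minty-type argument based on the monotonicity \eqref{vgs} of $a(x,\cdot)$ identifies $\tilde w$ as a weak solution of the single obstacle problem, whence $\tilde w=w_0$ by uniqueness; moreover the quantity $\int_{B_R(x_0)}[a(x,Dw_{0,i})-a(x,Dw_0)]\cdot(Dw_{0,i}-Dw_0)\,dx$ tends to $0$ along the way, and \eqref{gvgs} then forces $Dw_{0,i}\to Dw_0$ in measure and so a.e. along a further subsequence. (Alternatively, one may simply \emph{define} $w_0$ in the corollary as $\lim_i w_{0,i}$; the argument above shows this is consistent with Lemma \ref{dudw1-}, and it is the route already used for the single obstacle problem in \cite{xiong1}.) Granting this, $Du_i-Dw_{0,i}\to Du-Dw_0$ a.e., so Fatou's lemma and the continuity of $g_\chi,h_\chi,[g(\cdot)]^\xi$ give
\begin{equation*}
\fint_{B_R(x_0)}g_\chi(|Du-Dw_0|)\,dx \leqslant \liminf_{i\to\infty}\fint_{B_R(x_0)}g_\chi(|Du_i-Dw_{0,i}|)\,dx \leqslant c_1\limsup_{i\to\infty}g_\chi(A_{0,i}) \leqslant c_1[g_\chi(A_1)+g_\chi(A_3)],
\end{equation*}
and likewise for the $h_\chi$-- and the $[g(\cdot)]^\xi$--estimates, which finishes the proof. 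I expect the main obstacle to be precisely the stability/compactness of the single obstacle problem needed for $Dw_{0,i}\to Dw_0$; everything else is a soft limiting argument.
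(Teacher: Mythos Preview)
Your approach is essentially the same as the paper's: apply Lemma \ref{dudw1-} to the approximants $(u_i,f_i)$ from Definition \ref{opdy}, then pass to the limit using a.e.\ convergence of $Du_i$ (extracted via F.~Riesz's theorem from $u_i\to u$ in $W^{1,1}$) together with Fatou's lemma, and handle the right-hand side via the $\limsup$ hypothesis on $\int_{B_R}|f_i|\,dx$ and the $\triangle_2$ property of $g^{-1},g_\chi,h_\chi$. The paper compresses all of this into one sentence (``F.~Riesz's theorem and Fatou's lemma'').

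The one point on which you go further than the paper is the stability step $Dw_{0,i}\to Dw_0$: the paper's short proof does not spell this out, whereas you correctly note that Lemma \ref{dudw1-} applied at level $i$ compares $u_i$ with the obstacle solution $w_{0,i}$ having boundary datum $u_i$, not with $w_0$ attached to $u$, so an identification $w_{0,i}\to w_0$ (via monotonicity/Minty and uniqueness, or by simply \emph{defining} $w_0$ as this limit) is needed before Fatou can be invoked on the left-hand side. Your sketch of this stability argument is sound and is indeed where the only real work lies; everything else matches the paper's route.
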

\begin{proof}
By Definition \ref{opdy}, there exists functions
$$f_{i} \in (W^{1,G}(B_R(x_0)))'\cap L^{1}(B_R(x_0))\ \  with\ \  f_{i}\stackrel{\ast}\rightharpoonup \mu \ in \ \mathcal{M}_{b}(B_R(x_0)) \ \ as \ i\rightarrow+\infty$$
 satisfies
$$\limsup_{i\rightarrow+\infty}\int_{B_R(x_0)}|f_i|dx\leqslant|\mu|(\overline{B_R(x_0)}).$$
and solutions $u_i\in W^{1,G}(B_R(x_0))$ of the obstacle problems \eqref{opdy1} with
$$u_i\rightarrow u \ \ a.e. \ \  in \ \ B_R(x_0) $$ and $$u_i\rightarrow u \  \ in  \ \ W^{1,1}(B_R(x_0)).$$
Thus, utilizing F.Riesz's theorem and Fatou's lemma, we finalize the proof.
\end{proof}

The lemma presented below establishes comparison estimates between inhomogeneous obstacle problems and homogeneous obstacle problems.
\begin{lemma}\label{dudw1}
Assume that conditions  \eqref{a(x)1}-\eqref{a(x)3} are fulfilled, let $B_{2R}(x_0)\subset \Omega$ and the map $w_0\in W^{1,G}(B_R(x_0))$ with $w_0 \geq \psi_1$ solves the variational inequality \eqref{bjw1-}
Let $w_1 \in w_0+W_{0}^{1,G}(B_R(x_0))$ with $w_1 \geq \psi_1 $ be the weak solution of the homogeneous obstacle problem
\begin{equation}\label{bjw1}
\int_{B_R(x_0)} a(x,Dw_1)\cdot D(v-w_1)dx \geq 0
\end{equation}
for any  $v \in w_1+W_0^{1,G}(B_R(x_0))$ that  satisfy $ v \geqslant \psi_1 $ a.e.  in $B_R(x_0)$.
Then we have
\begin{equation}\label{ga0}
\fint_{B_{R}(x_0)}g_{\chi}(|Dw_0-Dw_1|)dx\leqslant c_{1}g_{\chi}(A_3),
\end{equation}
\begin{equation}\label{ha0}
\fint_{B_{R}(x_0)}h_{\chi}(|Dw_0-Dw_1|)dx\leqslant c_{1}h_{\chi}(A_3),
\end{equation}
\begin{equation}\label{ga01}
\fint_{B_R(x_0)}[g(|Dw_0-Dw_1|)]^{\xi}dx\leqslant c_{2}\left[ \frac{D\Psi_2(B_{R}(x_0))}{R^{n-1}}\right] ^{\xi}
\end{equation}
for
$A_3, \chi,  \xi, c_1, c_2$ are as in Lemma \ref{dudw1-} and Corollary \ref{dudw1c}.
\end{lemma}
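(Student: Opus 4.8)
The strategy mirrors the proof of Lemma \ref{dudw1-}, except that now the roles of the two obstacle problems \eqref{bjw1-} and \eqref{bjw1} are played by $w_0$ and $w_1$, so the ``source'' term is the obstacle-data discrepancy $a(x,D\psi_2)$ alone, with no measure contribution. First I would verify that $w_0\le\psi_2$ a.e.\ in $B_R(x_0)$: this is exactly the argument already given at the start of the proof of Lemma \ref{dudw1-}, testing \eqref{bjw1-} with $v=\min\{w_0,\psi_2\}$ and using \eqref{gvgs} to force $D(w_0-\psi_2)_+=0$, hence $(w_0-\psi_2)_+=0$ since it vanishes on $\partial B_R(x_0)$. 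Then, assuming $A_3>0$ (otherwise $D\Psi_2(B_R(x_0))=0$ and \eqref{vgz1}--\eqref{vgs} give $w_0=w_1$), I would rescale: set $\overline{w_0}(x)=w_0(x_0+Rx)/(A_3R)$, $\overline{w_1}(x)=w_1(x_0+Rx)/(A_3R)$, $\overline{a}(x,z)=a(x_0+Rx,A_3z)/g(A_3)$, $\overline{\psi_1},\overline{\psi_2}$ analogously, so that $\overline{w_0}$ and $\overline{w_1}$ solve the rescaled versions of \eqref{bjw1-} and \eqref{bjw1} on $B_1$ with the structure constants $i_g,s_g,l,L$ preserved and with the normalization $D\overline{\Psi_2}(B_1)=1$.

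Next I would split into the slow-growth case $\int^\infty(s/\overline{G}(s))^{1/(n-1)}\,ds=\infty$ and the fast-growth case $\int^\infty(s/\overline{G}(s))^{1/(n-1)}\,ds<\infty$, exactly as in Lemma \ref{dudw1-}. In the slow-growth case, with $\mathcal{F}:=\big(\int_{B_1}F_\chi(|D\overline{w_0}-D\overline{w_1}|)\,dx\big)^{1/n}$, I would test \eqref{bjw1-} (rescaled) with $\overline{v}=\overline{w_0}+T_k\big((\overline{w_1}-\overline{w_0})/(c_n\mathcal{F})\big)c_n\mathcal{F}$ and \eqref{bjw1} (rescaled) with $\overline{v}=\overline{w_1}+T_k\big((\overline{w_0}-\overline{w_1})/(c_n\mathcal{F})\big)c_n\mathcal{F}$; both lie in the appropriate admissible classes since $\min\{\overline{w_0},\overline{w_1}\}\ge\overline{\psi_1}$ (because both $w_0,w_1\ge\psi_1$) and $\overline{w_0},\overline{w_1}\le\overline{\psi_2}$ need only hold for the first one, which it does since $\overline{w_0}\le\overline{\psi_2}$ and $\overline{w_1}\le\overline{\psi_2}$ (the latter follows by the same min-argument applied to \eqref{bjw1}, or one observes that $T_k$ of a difference between two functions below $\psi_2$ stays below $\psi_2$). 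Adding the two inequalities kills the right-hand side of \eqref{bjw1} and leaves only $\int_{B_1}|\operatorname{div}\overline{a}(x,D\overline{\psi_2})|\cdot T_k(\cdots)c_n\mathcal{F}\,dx\le ck\mathcal{F}\,D\overline{\Psi_2}(B_1)=ck\mathcal{F}$ over $C_k$, and analogously the $\Phi_k$-test gives the bound $c\mathcal{F}$ over the annular sets $D_k$. Summing the level-set contributions as in Step 2.1 of Lemma 5.1 of \cite{b13} yields $\int_{B_1}\overline{g}_\chi(|D\overline{w_0}-D\overline{w_1}|)\,dx\le c$. The fast-growth case is even simpler: test with $\overline{v}=\overline{w_0}+(\overline{w_1}-\overline{w_0})/2$ and $\overline{v}=\overline{w_1}+(\overline{w_0}-\overline{w_1})/2$, add, use Sobolev embedding (Proposition 3.3 of \cite{b13}) to absorb $\|D\overline{w_0}-D\overline{w_1}\|_{L^G(B_1)}$, and conclude as in Step 2.2 of Lemma 5.1 of \cite{b13}. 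Undoing the scaling gives \eqref{ga0}; the estimates \eqref{ha0} and \eqref{ga01} follow verbatim from the arguments of Corollary 5.2 and Lemma 5.3 of \cite{b13}.

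The only genuinely new point compared with Lemma \ref{dudw1-} is checking that the chosen test functions respect \emph{both} obstacle constraints simultaneously in the double-obstacle inequality \eqref{bjw1-}; since the truncation operators $T_k$, $\Phi_k$ and the midpoint convex combination all map pairs of functions lying in $[\psi_1,\psi_2]$ into that same interval, this is routine, and I expect no real obstacle — the proof is essentially a transcription of Lemma \ref{dudw1-} with the measure term $f$ removed and $w_0$ playing the role formerly played by $u$. Thus, aside from reproducing the two-case estimate of \cite{b13}, the lemma follows directly.
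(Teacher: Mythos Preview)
Your proposal is correct and follows essentially the same approach as the paper: rescale by $A_3$, split into the slow- and fast-growth cases, use the same truncated and midpoint test functions in \eqref{bjw1-} and \eqref{bjw1}, and appeal to the machinery of \cite{b13} (Lemma 5.1, Corollary 5.2, Lemma 5.3). One small correction: \eqref{bjw1-} is a \emph{single}-obstacle variational inequality (only $v\ge\psi_1$ is required of comparison functions), not a double-obstacle one, so your effort to verify an upper constraint $\overline v\le\overline{\psi_2}$ is unnecessary---and your claim that $w_1\le\psi_2$ follows by ``the same min-argument applied to \eqref{bjw1}'' is not justified (the right-hand side of \eqref{bjw1} is $0$, not $\int a(x,D\psi_2)\cdot D(v-w_1)$, so the monotonicity step does not close), but fortunately this is never needed.
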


\begin{proof}
Without loss of generality we may assume that $A_3>0$, then we define
$$\overline{w_0}(x)=\frac{u(x_0+Rx)}{A_3R}, \ \ \ \overline{w_1}(x)=\frac{w_1(x_0+Rx)}{A_3R}, \ \ \ \overline{a}(x,z)=\frac{a(x_0+Rx,A_3z)}{g(A_3)},$$
$$\overline{g}(x)=\frac{g(A_3x)}{g(A_3)}, \ \ \  \overline{\psi_1}(x)=\frac{\psi_1(x_0+Rx)}{A_3R}, \ \     \overline{\psi_2}(x)=\frac{\psi_2(x_0+Rx)}{A_3R}, \ \ \ $$
 $$D\overline{\Psi_2}(B_1):=\int_{B_1}|\operatorname{div}\left(\overline{a}(x,D\overline{\psi_2})\right)| dx=1.$$
Subsequently, the proof follows a similar structure to that of Lemma \ref{dudw1-}.
For the slow growth case, we take $$\overline{v_1}=\overline{w_0}+T_k\left( \frac{\overline{w_1}-\overline{w_0}}{c_n\mathcal{F}}\right)c_n\mathcal{F}\geqslant  \overline{\psi_1}$$ and $$\overline{v_2}=\overline{w_1}+T_k\left( \frac{\overline{w_0}-\overline{w_1}}{c_n\mathcal{F}}\right)c_n\mathcal{F} \geqslant  \overline{\psi_1}$$ as comparison functions in the inequalities \eqref{bjw1-} and \eqref{bjw1}.

For the fast growth case:
$$\int^{\infty}\left( \frac{s}{\overline{G}(s)}\right)^{\frac{1}{n-1}}ds<\infty.$$
We take $\overline{v_1}=\overline{w_0}+\frac{\overline{w_1}-\overline{w_0}}{2}\geqslant \overline{\psi_1}$ and $\overline{v_2}=\overline{w_1}+\frac{\overline{w_0}-\overline{w_1}}{2}\geqslant \overline{\psi_1}$ as comparison functions in the inequalities \eqref{bjw1-} and \eqref{bjw1},
and therefore we obtain
\begin{eqnarray*}
\fint_{B_{R}(x_0)}g_{\chi}(|Dw_0-Dw_1|)dx &\leqslant& g_{\chi} \left(g^{-1} \left( \frac{\int_{B_R(x_0)}|\operatorname{div}(a(x,D\psi_2))|dx}{R^{n-1}}\right) \right) \\
&\leqslant& c_{1}g_{\chi}(A_3).
\end{eqnarray*}
Similarly, we obtain \eqref{ha0} and \eqref{ga01}.
\end{proof}

Following this, we show a comparison estimate between solutions of a homogeneous obstacle problem and a suitable elliptic equation.
\begin{lemma}\label{dw1w2}
Under the assumptions \eqref{a(x)1}-\eqref{a(x)3}, we assume that $B_{2R}(x_0)\subseteq \Omega$,$w_1 \in W^{1,G}(B_R(x_0))$ with $w_1\geqslant \psi_1$ solves the inequality \eqref{bjw1}. Let $w_2 \in W^{1,G}(B_R(x_0))$  be a weak solution of the equation
\begin{equation}\label{bjw2}
\left\{\begin{array}{r@{\ \ }c@{\ \ }ll}
-\operatorname{div}\left(  a(x, Dw_2)\right)&=&  -\operatorname{div}\left(  a(x, D\psi_1)\right) \ \ \ \  in \ \ B_R(x_0) \,, \\[0.05cm]
w_2&=&w_1  \ \ \ \ \ \ \ \ \ \ \ \ \ \ \ \ \ \ \ \  \ \mbox{on}\ \ \partial B_{R}(x_0) \,. \\[0.05cm]
\end{array} \right.
\end{equation}

Then we obtain
\begin{equation}\label{w1w21}
\fint_{B_{R}(x_0)}g_{\chi}(|Dw_1-Dw_2|)dx\leqslant c_{1}g_{\chi}(A_2),
\end{equation}
\begin{equation}\label{w1w22}
\fint_{B_{R}(x_0)}h_{\chi}(|Dw_1-Dw_2|)dx\leqslant c_{1}h_{\chi}(A_2),
\end{equation}
\begin{equation}\label{w1w23}
\fint_{B_R(x_0)}[g(|Dw_1-Dw_2|)]^{\xi}dx\leqslant c_{2}\left[ \frac{D\Psi_1(B_{R}(x_0))}{R^{n-1}}\right] ^{\xi},
\end{equation}
for
$$A_2:=g^{-1}\left( \frac{D\Psi_1(B_R(x_0))}{R^{n-1}}\right)$$and $\chi,\xi ,c_1, c_2$ are as in Lemma \ref{dudw1-}.
\end{lemma}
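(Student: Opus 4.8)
\emph{Proof plan.} The plan is to reproduce the scheme of Lemma~\ref{dudw1-}, the only structural change being that here the competitor problem \eqref{bjw2} is an equation rather than a variational inequality, so one of the two relations to be combined is an equality. First I would normalize: we may assume $A_2>0$ (otherwise the comparison estimates are trivial, cf.\ Lemma~\ref{dudw1-}), and set $\overline{w}_1(x)=\frac{w_1(x_0+Rx)}{A_2R}$, $\overline{w}_2(x)=\frac{w_2(x_0+Rx)}{A_2R}$, $\overline{a}(x,z)=\frac{a(x_0+Rx,A_2z)}{g(A_2)}$, $\overline{g}(t)=\frac{g(A_2t)}{g(A_2)}$, $\overline{G}(t):=\int_0^t\overline{g}(\tau)\,d\tau$, $\overline{\psi_1}(x)=\frac{\psi_1(x_0+Rx)}{A_2R}$. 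After rescaling, $\overline{w}_1\in W^{1,G}(B_1)$ with $\overline{w}_1\geqslant\overline{\psi_1}$ solves the homogeneous obstacle problem \eqref{bjw1} on $B_1$, $\overline{w}_2\in W^{1,G}(B_1)$ solves $-\operatorname{div}\left(\overline{a}(x,D\overline{w}_2)\right)=-\operatorname{div}\left(\overline{a}(x,D\overline{\psi_1})\right)$ in $B_1$ with $\overline{w}_2=\overline{w}_1$ on $\partial B_1$, the structure conditions \eqref{a(x)1} and $i_g\leqslant t\overline{g}'(t)/\overline{g}(t)\leqslant s_g$ are preserved, and $D\overline{\Psi_1}(B_1):=\int_{B_1}|\operatorname{div}\left(\overline{a}(x,D\overline{\psi_1})\right)|\,dx=1$.

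The crucial preliminary step is to show $\overline{w}_2\geqslant\overline{\psi_1}$ a.e.\ in $B_1$. Testing the weak formulation of the rescaled equation with $(\overline{\psi_1}-\overline{w}_2)_+\in W_0^{1,G}(B_1)$, which is admissible since $\overline{w}_2=\overline{w}_1\geqslant\overline{\psi_1}$ on $\partial B_1$, gives $\int_{\{\overline{w}_2<\overline{\psi_1}\}}[\overline{a}(x,D\overline{w}_2)-\overline{a}(x,D\overline{\psi_1})]\cdot(D\overline{w}_2-D\overline{\psi_1})\,dx=0$, and \eqref{gvgs} then forces $D(\overline{\psi_1}-\overline{w}_2)_+=0$ a.e., hence $(\overline{\psi_1}-\overline{w}_2)_+\equiv0$. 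This lower bound is precisely what makes the truncated and averaged competitors admissible in \eqref{bjw1}: for every $k$ the clamped function $\overline{w}_1+T_k\!\left(\frac{\overline{w}_2-\overline{w}_1}{c_n\mathcal{F}}\right)c_n\mathcal{F}$, its $\Phi_k$-analogue (with $T_k,\Phi_k$ and the auxiliary function $F_\chi$ as in the proof of Lemma~\ref{dudw1-}), and the halved function $\overline{w}_1+\tfrac12(\overline{w}_2-\overline{w}_1)$ all lie above $\overline{\psi_1}$ a.e., as one checks case by case, invoking $\overline{w}_1\geqslant\overline{\psi_1}$ where the truncation clamps upward and $\overline{w}_2\geqslant\overline{\psi_1}$ where it clamps downward.

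Then I would split into the slow growth case $\int^{\infty}(s/\overline{G}(s))^{1/(n-1)}\,ds=\infty$ and the fast growth case $\int^{\infty}(s/\overline{G}(s))^{1/(n-1)}\,ds<\infty$. In the slow growth case, with $\mathcal{F}:=\left(\int_{B_1}F_\chi(|D\overline{w}_1-D\overline{w}_2|)\,dx\right)^{1/n}$, I insert $\overline{w}_1+T_k\!\left(\frac{\overline{w}_2-\overline{w}_1}{c_n\mathcal{F}}\right)c_n\mathcal{F}$ into \eqref{bjw1} and test the equation for $\overline{w}_2$ with $T_k\!\left(\frac{\overline{w}_1-\overline{w}_2}{c_n\mathcal{F}}\right)c_n\mathcal{F}\in W_0^{1,G}(B_1)$; subtracting these relations, the left-hand side is bounded below by $c\int_{C_k}\overline{G}(|D\overline{w}_1-D\overline{w}_2|)\,dx$ via \eqref{gvgs}, while the remaining term $-\int\overline{a}(x,D\overline{\psi_1})\cdot DT_k(\cdots)c_n\mathcal{F}\,dx$ is integrated by parts (legitimate because $\operatorname{div}(\overline{a}(x,D\overline{\psi_1}))\in L^1_{loc}$) and estimated by $c_n k\mathcal{F}\,D\overline{\Psi_1}(B_1)=c_n k\mathcal{F}$, giving $\int_{C_k}\overline{G}(|D\overline{w}_1-D\overline{w}_2|)\,dx\leqslant ck\mathcal{F}$; using the $\Phi_k$-competitor instead gives $\int_{D_k}\overline{G}(|D\overline{w}_1-D\overline{w}_2|)\,dx\leqslant c\mathcal{F}$, with $C_k,D_k$ as in Lemma~\ref{dudw1-}. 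Summing over $k$ as in Step~2.1 of Lemma~5.1 of \cite{b13} yields $\int_{B_1}\overline{g}_\chi(|D\overline{w}_1-D\overline{w}_2|)\,dx\leqslant c$. In the fast growth case I use the halved competitors $\overline{w}_1+\tfrac12(\overline{w}_2-\overline{w}_1)$ in \eqref{bjw1} and $\tfrac12(\overline{w}_1-\overline{w}_2)$ as test function in the equation, so that \eqref{gvgs} together with the Orlicz--Sobolev embedding (Proposition~3.3 in \cite{b13}) gives $\int_{B_1}\overline{G}(|D\overline{w}_1-D\overline{w}_2|)\,dx\leqslant c\|D\overline{w}_1-D\overline{w}_2\|_{L^G(B_1)}$, and Step~2.2 of Lemma~5.1 of \cite{b13} again gives $\int_{B_1}\overline{g}_\chi(|D\overline{w}_1-D\overline{w}_2|)\,dx\leqslant c$. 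Undoing the scaling produces \eqref{w1w21}; the estimates \eqref{w1w22} and \eqref{w1w23} follow from the same relations by arguing as in Corollary~5.2 and Lemma~5.3 of \cite{b13}.

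The main obstacle, compared with Lemmas~\ref{dudw1-} and \ref{dudw1}, is the bookkeeping needed to keep all competitors above the obstacle $\psi_1$ while at the same time extracting the truncation level $k$ (or $1$) as a multiplicative factor on the right-hand side: this is exactly why the bound $w_2\geqslant\psi_1$ has to be established first, before any competitor is introduced. The remaining ingredients — the integration by parts transferring $\int\overline{a}(x,D\overline{\psi_1})\cdot D(\cdots)$ onto $D\overline{\Psi_1}(B_1)$, the absorption producing $g_\chi(A_2)$ after rescaling, and the two iteration steps borrowed from \cite{b13} — are routine transcriptions of arguments already used in this section.
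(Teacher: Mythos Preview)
Your proposal is correct and follows essentially the same route as the paper's proof: one first verifies $w_2\geqslant\psi_1$ by testing \eqref{bjw2} with $(\psi_1-w_2)_+$ and using \eqref{gvgs}, then rescales by $A_2$ and runs the slow/fast growth dichotomy from Lemma~\ref{dudw1-} with the same truncated competitors $\overline v=\overline{w_1}+T_k(\cdots)c_n\mathcal{F}\geqslant\overline{\psi_1}$ in \eqref{bjw1} and $\overline\varphi=T_k(\cdots)c_n\mathcal{F}$ in \eqref{bjw2}, respectively the halved versions in the fast growth case. The only cosmetic difference is that the paper carries out the obstacle comparison $w_2\geqslant\psi_1$ before rescaling rather than after.
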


\begin{proof}
We test the inequality \eqref{bjw2} with $(\psi_1-w_2)_{+} \in W_0^{1,G}(B_R(x_0))$, then it follows from  \eqref{gvgs} that
\begin{eqnarray*}
\int_{B_R(x_0)}G(|D(\psi_1-w_2)_+|)dx &\leqslant& c \int_{B_R(x_0)}[a(x,D\psi_1)-a(x,Dw_2)] \cdot D[(\psi_1-w_2)_+]dx \\
&\leqslant& 0.
\end{eqnarray*}
Because $G(\cdot)$ is increasing over $[0,+\infty)$ and $G(0)=0$, we can infer that
$$D(\psi_1-w_2)_+=0 \ \ a.e. \ \ \ in \ \ B_R(x_0).$$
Combining with $(\psi_1-w_2)_+=0\ \ \  on \ \ \partial B_R(x_0)$, which implies
$$(\psi_1-w_2)_+=0 \ \ \ a.e.\ \ in \ \ B_R(x_0).$$
It means that
$w_2 \geqslant \psi_1$ a.e. in $B_R(x_0)$.

Then we define
$$\overline{w_1}(x)=\frac{w_1(x_0+Rx)}{A_2R}, \ \ \ \overline{w_2}(x)=\frac{w_2(x_0+Rx)}{A_2R}, \ \ \ \overline{a}(x,z)=\frac{a(x_0+Rx,A_2z)}{g(A_2)},$$
$$\overline{g}(x)=\frac{g(A_2x)}{g(A_2)},  \ \ \ \ \overline{\psi_1}(x)=\frac{\psi_1(x_0+Rx)}{A_2R}, \ \ \ D\overline{\Psi_1}(B_1):=\int_{B_1}|\operatorname{div}\left(\overline{a}(x,D\overline{\psi_1})\right)| dx=1.$$
Subsequently, the proof follows a similar structure to that of Lemma \ref{dudw1-}.

For the slow growth case, we take $$\overline{v}=\overline{w_1}+T_k\left( \frac{\overline{w_2}-\overline{w_1}}{c_n\mathcal{F}}\right)c_n\mathcal{F}\geqslant  \overline{\psi_1}$$ and $$\overline{\varphi}=T_k\left( \frac{\overline{w_1}-\overline{w_2}}{c_n\mathcal{F}}\right)c_n\mathcal{F}$$ as test functions in the inequalities \eqref{bjw1} and equation \eqref{bjw2}.

For the fast growth case,
We take $\overline{v}=\overline{w_1}+\frac{\overline{w_2}-\overline{w_1}}{2} \geqslant \overline{\psi_1}$ and $\overline{\varphi}=\frac{\overline{w_1}-\overline{w_2}}{2}$ as test functions in the inequalities \eqref{bjw1} and \eqref{bjw2}.
Consequently,  we have
\begin{eqnarray*}
\fint_{B_{R}(x_0)}g_{\chi}(|Dw_1-Dw_2|)dx &\leqslant& g_{\chi} \left(g^{-1} \left( \frac{\int_{B_R(x_0)}|\operatorname{div}(a(x,D\psi_1))|dx}{R^{n-1}}\right) \right) \\
&\leqslant& c_{1}g_{\chi}(A_2).
\end{eqnarray*}
Moreover, \eqref{w1w22} and \eqref{w1w23} also hold.
\end{proof}

Based on the findings of Lemma 5.1, Corollary 5.2, and Lemma 5.3 as detailed in \cite{b13}, the ensuing lemma is established.
\begin{lemma}\label{dw2w3}
Under the assumptions \eqref{a(x)1}-\eqref{a(x)3}, we assume that $B_{2R}(x_0)\subset \Omega$, $w_2 \in W^{1,G}(B_R(x_0))$  solves the equation \eqref{bjw2}. Let $w_3 \in W^{1,G}(B_R(x_0))$  be a weak solution of the equation
\begin{equation} \label{bjw3}
\left\{\begin{array}{r@{\ \ }c@{\ \ }ll}
-\operatorname{div}\left(  a(x, Dw_3)\right)&=&0 \ \ \ \  in \ \ B_R(x_0) \,, \\[0.05cm]
w_3&=&w_2  \ \ \  \mbox{on}\ \ \partial B_{R}(x_0) \,. \\[0.05cm]
\end{array} \right.
\end{equation}
Then we have
\begin{equation*}
\fint_{B_{R}(x_0)}g_{\chi}(|Dw_2-Dw_3|)dx\leqslant c_{1}g_{\chi}(A_2),
\end{equation*}
\begin{equation*}
\fint_{B_{R}(x_0)}h_{\chi}(|Dw_2-Dw_3|)dx\leqslant c_{1}h_{\chi}(A_2),
\end{equation*}
\begin{equation*}
\fint_{B_R(x_0)}[g(|Dw_2-Dw_3|)]^{\xi}dx\leqslant c_{2}\left[ \frac{D\Psi_1(B_{R}(x_0))}{R^{n-1}}\right] ^{\xi},
\end{equation*}
for
$A_2,\chi,\xi ,c_1, c_2$ are as in Lemma \ref{dw1w2} and Lemma \ref{dudw1-}.
\end{lemma}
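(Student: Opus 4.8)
The plan is to mirror the structure of Lemma \ref{dw1w2}, since the situation is essentially identical: we are comparing the solution $w_2$ of the equation with right-hand side $-\operatorname{div}(a(x,D\psi_1))$ to the solution $w_3$ of the genuinely homogeneous equation sharing the same boundary data on $\partial B_R(x_0)$. First I would note that, unlike in Lemma \ref{dw1w2}, there is no obstacle constraint to verify here — both $w_2$ and $w_3$ are ordinary weak solutions — so the difference $w_2-w_3$ lies in $W_0^{1,G}(B_R(x_0))$ and may be used directly as a test function. Assuming without loss of generality that $A_2>0$ (otherwise $D\Psi_1(B_R(x_0))=0$, forcing $w_2=w_3$ by \eqref{vgz1}--\eqref{vgs}), I would rescale exactly as before: set $\overline{w_2}(x)=w_2(x_0+Rx)/(A_2R)$, $\overline{w_3}(x)=w_3(x_0+Rx)/(A_2R)$, $\overline{a}(x,z)=a(x_0+Rx,A_2z)/g(A_2)$, $\overline{g}(x)=g(A_2x)/g(A_2)$, and $\overline{\psi_1}(x)=\psi_1(x_0+Rx)/(A_2R)$, so that the normalization $D\overline{\Psi_1}(B_1)=1$ holds, and the rescaled growth constants $i_g,s_g$ are preserved.

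Next I would split into the slow-growth case $\int^\infty (s/\overline{G}(s))^{1/(n-1)}\,ds=\infty$ and the fast-growth case $\int^\infty (s/\overline{G}(s))^{1/(n-1)}\,ds<\infty$, precisely as in the proof of Lemma \ref{dudw1-} and Lemma \ref{dw1w2}. In the slow-growth case one uses the truncated test functions $\overline{\varphi}=T_k\!\left(\tfrac{\overline{w_2}-\overline{w_3}}{c_n\mathcal{F}}\right)c_n\mathcal{F}$ in both weak formulations \eqref{bjw2} and \eqref{bjw3}, together with $\Phi_k$ on the annular level sets, invoking \eqref{gvgs} to bound $\int \overline{G}(|D\overline{w_2}-D\overline{w_3}|)$ from below by the monotonicity defect $\int[\overline{a}(x,D\overline{w_2})-\overline{a}(x,D\overline{w_3})]\cdot D\overline{\varphi}$, which after subtracting the two equations equals $\int |\operatorname{div}\overline{a}(x,D\overline{\psi_1})|\,\overline{\varphi}\,dx \le ck\mathcal{F}$; the level-set bookkeeping from Step 2.1 of Lemma 5.1 in \cite{b13} then yields $\int_{B_1}\overline{g}_\chi(|D\overline{w_2}-D\overline{w_3}|)\,dx\le c$. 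In the fast-growth case one instead takes $\overline{\varphi}=\tfrac{\overline{w_2}-\overline{w_3}}{2}$, applies Sobolev embedding (Proposition 3.3 in \cite{b13}) to get $\int_{B_1}\overline{G}(|D\overline{w_2}-D\overline{w_3}|)\,dx\le c\|D\overline{w_2}-D\overline{w_3}\|_{L^G(B_1)}$, and concludes as in Step 2.2 of Lemma 5.1 in \cite{b13}. Unscaling back to $B_R(x_0)$ gives the first claimed estimate; the estimates for $h_\chi$ and for $[g(|Dw_2-Dw_3|)]^\xi$ follow verbatim from the arguments of Corollary 5.2 and Lemma 5.3 in \cite{b13}.

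The main obstacle, such as it is, is purely bookkeeping: one must check that after subtracting \eqref{bjw2} and \eqref{bjw3} the right-hand side reduces cleanly to a single term controlled by $D\Psi_1(B_R(x_0))$, and that the choice of the exponent ranges for $\chi$ and $\xi$ (inherited from Lemma \ref{dudw1-}) is exactly what makes the two growth-dichotomy arguments of \cite{b13} applicable after rescaling. Since no obstacle feasibility needs to be established and the boundary data of $w_2$ and $w_3$ coincide, this lemma is in fact strictly easier than Lemma \ref{dw1w2}, and I would simply write: the proof is analogous to that of Lemma \ref{dw1w2}, testing \eqref{bjw2} and \eqref{bjw3} with the appropriate truncations of $w_2-w_3$, and then refer to Lemma 5.1, Corollary 5.2 and Lemma 5.3 in \cite{b13} for the remaining computations.
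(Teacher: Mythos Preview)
Your proposal is correct and matches the paper's approach: the paper simply states that Lemma~\ref{dw2w3} follows directly from Lemma~5.1, Corollary~5.2, and Lemma~5.3 in \cite{b13}, since comparing $w_2$ (solution with $L^1$ right-hand side $-\operatorname{div}(a(x,D\psi_1))$) to $w_3$ (solution of the homogeneous equation with the same boundary data) is precisely the setting treated there. Your more detailed outline via the rescaling and slow/fast growth dichotomy is exactly how those results in \cite{b13} are proved, so there is no discrepancy.
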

In the sequel, we  present  a weighted type energy estimate.
\begin{lemma}\label{bjuw1-}
Under the hypothesis of Lemma \ref{dudw1-}, then there exists a constant $c=c(data)$ such that
\begin{equation*}
\fint_{B_{R}(x_0)}\frac{|V_{g}(Du)-V_{g}(Dw_0)|^{2}}{(\alpha+|u-w_0|)^{\xi}}dx\leqslant c\frac{\alpha^{1-\xi}}{\xi-1}\left[ \fint_{B_{R}(x_0)}|f|dx+D\Psi_2(B_{R}(x_0)) \right]
\end{equation*}
for $\alpha>0$ and $\xi>1$.
\end{lemma}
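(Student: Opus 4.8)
The plan is to test the variational inequality \eqref{bju-} against a perturbation of $u$ in the direction of $w_0$, truncated and weighted so that the weight $(\alpha+|u-w_0|)^{-\xi}$ appears naturally after integration. More precisely, I would set $\varphi := (u-w_0)\,[\,(\alpha+|u-w_0|)^{1-\xi} - \alpha^{1-\xi}\,]$ (or, to stay inside the admissible convex set, a one-sided variant built from $(u-w_0)_+$ and $(u-w_0)_-$ separately, exploiting that both $\min\{u,w_0\}$ and suitable combinations remain between $\psi_1$ and $\psi_2$ since $\psi_1\le u,w_0\le\psi_2$ a.e. — this is exactly the admissibility bookkeeping already used in Lemma \ref{dudw1-}). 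Then $v=u-\varphi$ is admissible in \eqref{bju-} while $v=w_0+\varphi$ is admissible in \eqref{bjw1-}; subtracting the two inequalities produces on the left the quantity
\[
\int_{B_R(x_0)} \big[a(x,Du)-a(x,Dw_0)\big]\cdot D\varphi\,dx,
\]
and on the right a term controlled by $\int_{B_R(x_0)}\big(|f|+|\operatorname{div} a(x,D\psi_2)|\big)\,|\varphi|\,dx$, hence by $\alpha^{1-\xi}\big[\|f\|_{L^1}+D\Psi_2(B_R(x_0))\big]$ up to a constant, since $|\varphi|\le \alpha^{1-\xi}|u-w_0|\cdot(\alpha+|u-w_0|)^{?}$ — the precise bound being $|\varphi|\lesssim \alpha^{1-\xi}|u-w_0|$ is false, so one rather uses $|u-w_0|(\alpha+|u-w_0|)^{1-\xi}\le (\alpha+|u-w_0|)^{2-\xi}$ and handles the regime $\xi\in(1,2)$ directly and $\xi\ge 2$ by a further truncation; I will present the clean monotone choice below.

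**Key steps, in order.** First, I would compute $D\varphi$ explicitly: writing $\sigma:=u-w_0$ and $r(\sigma):=\sigma\,(\alpha+|\sigma|)^{1-\xi}$, one gets $D\varphi = r'(\sigma)(Du-Dw_0)$ with $r'(\sigma) = (\alpha+|\sigma|)^{1-\xi} + (1-\xi)|\sigma|(\alpha+|\sigma|)^{-\xi} \ge \xi^{-1}\,(\alpha+|\sigma|)^{1-\xi} \cdot c$ for an explicit $c>0$ when $\xi>1$ — actually one checks $r'(\sigma)\ge (2-\xi)_+ (\alpha+|\sigma|)^{1-\xi}$ directly, and for general $\xi>1$ uses instead the bounded truncation $r_m(\sigma)=\int_0^\sigma (\alpha+\min\{|t|,m\})^{-\xi}\,dt$ whose derivative is exactly $(\alpha+\min\{|\sigma|,m\})^{-\xi}$, bounded and admissible, then lets $m\to\infty$ by monotone convergence. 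Second, combine with the ellipticity/monotonicity bound \eqref{vgs}, i.e. $[a(x,Du)-a(x,Dw_0)]\cdot(Du-Dw_0)\approx |V_g(Du)-V_g(Dw_0)|^2$, to get the lower bound
\[
\int_{B_R(x_0)} \frac{|V_g(Du)-V_g(Dw_0)|^2}{(\alpha+|u-w_0|)^{\xi-1}}\,dx \;\lesssim\; \text{(right-hand side of the tested inequalities)}.
\]
Third — and this is the only genuinely delicate point — reconcile the exponent: the statement has weight $(\alpha+|u-w_0|)^{\xi}$ in the denominator but the test function above produces $(\alpha+|u-w_0|)^{\xi-1}$. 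So I would actually take the test perturbation with exponent shifted by one, $r(\sigma)=\sigma(\alpha+|\sigma|)^{-\xi}$ (derivative $\sim (\alpha+|\sigma|)^{-\xi}$, still bounded after truncation), giving directly the weight $(\alpha+|u-w_0|)^{\xi}$ on the left; the price is that on the right one must bound $\int |f|\,|\varphi|\,dx$ with $|\varphi|\le \alpha^{-\xi}|u-w_0|$, and since $|u-w_0|$ need not be bounded one instead uses $|\sigma|(\alpha+|\sigma|)^{-\xi}\le \alpha^{1-\xi}$ for $\xi\ge 1$ pointwise — this is the clean inequality that yields the factor $\alpha^{1-\xi}$. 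Fourth, assemble: divide by $|B_R(x_0)|$, absorb the constants, and note the factor $\tfrac{1}{\xi-1}$ arises from the lower bound on $r'$ near $\sigma=0$ where $r'(\sigma)=(\alpha+|\sigma|)^{-\xi}\ge$ nothing singular — in fact the $\tfrac{1}{\xi-1}$ more naturally comes from the first choice $r(\sigma)=\sigma(\alpha+|\sigma|)^{1-\xi}$ where $r'(\sigma)\ge \tfrac{2-\xi}{1}(\cdots)$ fails for $\xi\ge2$ and one integrates $\int_0^\infty (\alpha+t)^{-\xi}dt=\tfrac{\alpha^{1-\xi}}{\xi-1}$; I would route the estimate through precisely that integral identity so the constant $\tfrac{\alpha^{1-\xi}}{\xi-1}$ emerges transparently.

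**Main obstacle.** The substantive difficulty is purely the admissibility of the test functions inside the double-obstacle constraint set: unlike the unconstrained case, $v=u\pm\varphi$ and $v=w_0\pm\varphi$ must each satisfy $\psi_1\le v\le\psi_2$ a.e., and since $\varphi$ is an odd, monotone function of $u-w_0$ this requires splitting into the sets $\{u\ge w_0\}$ and $\{u<w_0\}$ and using, as in the proof of Lemma \ref{dudw1-}, that $w_0\le\psi_2$ a.e. (already established there) together with $u,w_0\ge\psi_1$. Everything else — the monotonicity inequality \eqref{vgs}, the $\Delta_2$ bounds from Lemma \ref{ag}, the bound $\limsup_i\int|f_i|\le|\mu|(\overline{B_R})$ — is available, and the truncation-and-pass-to-the-limit argument is routine. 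I would therefore spend the bulk of the written proof on the admissible-variation construction and on extracting the explicit constant $\tfrac{\alpha^{1-\xi}}{\xi-1}$ from $\int_\alpha^\infty s^{-\xi}\,ds$, and dispatch the rest by citing \eqref{gvgs} and Lemma \ref{dudw1-}.
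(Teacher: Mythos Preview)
Your exploration lands on the paper's choice, but you should commit to it and discard the other candidates: $r(\sigma)=\sigma(\alpha+|\sigma|)^{1-\xi}$ gives the wrong weight exponent, as you note, and $r(\sigma)=\sigma(\alpha+|\sigma|)^{-\xi}$ has $r'(\sigma)<0$ once $|\sigma|>\alpha/(\xi-1)$, so it yields no lower bound on the monotonicity term. The correct test function is your $r_m$ \emph{without} the truncation: the primitive $\Phi(s)=\int_0^s(\alpha+t)^{-\xi}\,dt$ applied to $(u-w_0)_\pm$. This is exactly the paper's
\[
\eta_\pm=\frac{1}{\xi-1}\Bigl[1-\bigl(1+\tfrac{(u-w_0)_\pm}{\alpha}\bigr)^{1-\xi}\Bigr]=\alpha^{\xi-1}\,\Phi\bigl((u-w_0)_\pm\bigr),
\]
which is already bounded by $\tfrac{1}{\xi-1}$ since $\Phi(\infty)=\tfrac{\alpha^{1-\xi}}{\xi-1}$; that bound is precisely what produces the factor $\tfrac{\alpha^{1-\xi}}{\xi-1}$ on the right after testing with $v=u\mp\alpha\eta_\pm$ in \eqref{bju-} and $\bar v=w_0\pm\alpha\eta_\pm$ in \eqref{bjw1-} and invoking \eqref{vgs}. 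The admissibility you flag as the main obstacle is then a one-line check: from $\Phi(s)\le s\alpha^{-\xi}$ one gets $\alpha\eta_\pm\le(u-w_0)_\pm$, hence $u-\alpha\eta_+\ge\min\{u,w_0\}\ge\psi_1$, $u+\alpha\eta_-\le\max\{u,w_0\}\le\psi_2$, and $w_0\mp\alpha\eta_\mp\ge\psi_1$ (recall \eqref{bjw1-} carries only the lower obstacle, so no upper constraint on $\bar v$ is needed).
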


\begin{proof}
We consider
$$\eta_{\pm}:=\frac{1}{\xi-1}\left[1-\left(1-\frac{(u-w_0)_{\pm}}{\alpha}\right)^{1-\xi}\right],$$
then $\eta_{\pm} \in W_{0}^{1,G}(B_{R}(x_0))\cap L^{\infty}(B_{R}(x_0))$ and $\eta_{\pm} \geq0$. The function $\eta_{\pm}$ is taken with reference to Lemma 5.1 in \cite{byun1}. Moreover, through a series of calculations, we have
$$u-\alpha\eta_+ \geqslant \min \left\lbrace u,w_0\right\rbrace \geqslant \psi_1,$$
$$u+\alpha\eta_- \leqslant \max \left\lbrace u,w_0\right\rbrace \leqslant \psi_2,$$
$$w_{0}-\alpha\eta_- \geqslant \min \left\lbrace u,w_0\right\rbrace \geqslant \psi_1.$$
Now we choose $v=u\pm \alpha \eta_{\mp}$ and $\bar{v}=w_0 \pm \alpha \eta_{\pm}$, which satisfy $\psi_2 \geqslant v \geqslant \psi_1, \ \overline{v}\geqslant \psi_1$ a.e. in $B_{R}(x_0)$, as comparison functions in the variational inequalities \eqref{bju-} and \eqref{bjw1-} respectively, then by \eqref{vgs} we obtain
\begin{eqnarray*}
&&\int_{B_{R}(x_0)\cap \{ u\geq w_0\}}\frac{|V_{g}(Du)-V_{g}(Dw_0)|^{2}}{(\alpha+|u-w_0|)^{\xi}}dx \\
&\approx& \int_{B_{R}(x_0)\cap \{ u\geq w_0\}} \frac{[a(x,Du)-a(x,Dw_0)]\cdot(Du-Dw_0)}{(\alpha+|u-w_0|)^{\xi}}dx \\
&\leqslant& c\int_{B_{R}(x_0)}\alpha^{1-\xi}\eta_+\left[|f|+|\operatorname{div}(a(x,D\psi_2))|\right]dx \\
&\leqslant& c\frac{\alpha^{1-\xi}}{\xi-1}\left[ \int_{B_{R}(x_0)}|f|dx+D\Psi_2(B_{R}(x_0)) \right].
\end{eqnarray*}
and
\begin{eqnarray*}
&&\int_{B_{R}(x_0)\cap \{ u< w_0\}}\frac{|V_{g}(Du)-V_{g}(Dw_0)|^{2}}{(\alpha+|u-w_0|)^{\xi}}dx \\
&\leqslant& c\int_{B_{R}(x_0)}\alpha^{1-\xi}\eta_-\left[|f|+|\operatorname{div}(a(x,D\psi_2))|\right]dx \\
&\leqslant& c\frac{\alpha^{1-\xi}}{\xi-1}\left[ \int_{B_{R}(x_0)}|f|dx+D\Psi_2(B_{R}(x_0)) \right].
\end{eqnarray*}
Combining the last two estimates, the proof is complete.
\end{proof}
Analogous to the implications of Corollary \ref{dudw1c}, the subsequent Corollary is as follows.
\begin{corollary}\label{coro1}
Under the hypothesis of Corollary \ref{dudw1c}, then there exists $c=c(data)$ such that
\begin{equation*}
\fint_{B_{R}(x_0)}\frac{|V_{g}(Du)-V_{g}(Dw_0)|^{2}}{(\alpha+|u-w_0|)^{\xi}}dx\leqslant c\frac{\alpha^{1-\xi}}{\xi-1}\left[ \frac{|\mu|(\overline{B_{R}(x_0)})}{R^n}+\frac{D\Psi_2(B_{R}(x_0))}{R^n}\right]
\end{equation*}
for $\alpha>0$ and $\xi>1$.
\end{corollary}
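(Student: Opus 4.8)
The statement is the measure-data counterpart of the weighted energy estimate of Lemma~\ref{bjuw1-}, so I would derive it by the same approximation scheme already used for Corollary~\ref{dudw1c}. By Definition~\ref{opdy} applied on $B_R(x_0)$ there exist maps $f_i\in(W^{1,G}(B_R(x_0)))'\cap L^1(B_R(x_0))$ with $f_i\stackrel{\ast}\rightharpoonup\mu$ in $\mathcal{M}_b(B_R(x_0))$ and $\limsup_{i\to\infty}\int_{B_R(x_0)}|f_i|\,dx\leqslant|\mu|(\overline{B_R(x_0)})$, together with obstacle solutions $u_i\in W^{1,G}(B_R(x_0))$ of the variational inequality \eqref{opdy1} such that $u_i\to u$ a.e.\ and in $W^{1,1}(B_R(x_0))$. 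For each $i$ let $w_{0,i}\in u_i+W_0^{1,G}(B_R(x_0))$, $w_{0,i}\geqslant\psi_1$, be the solution of the single obstacle problem \eqref{bjw1-} with boundary datum $u_i$ in place of $u$. Since the boundary data $u_i$ converge to $u$, the maps $w_{0,i}$ converge to the map $w_0$ of Corollary~\ref{dudw1c} in $W^{1,1}(B_R(x_0))$ and, up to a subsequence, a.e.\ in $B_R(x_0)$; this stability is a routine consequence of the comparison estimate \eqref{ga0-} (applied to the difference of two single-obstacle solutions sharing the obstacle $\psi_1$ but with different boundary data) together with uniqueness of $w_0$, and is already tacitly used in Corollary~\ref{dudw1c}.

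First I would apply Lemma~\ref{bjuw1-} to the data $f_i$, the obstacle solution $u_i$ and its comparison map $w_{0,i}$, which yields, for every $\alpha>0$ and $\xi>1$,
\[
\fint_{B_R(x_0)}\frac{|V_g(Du_i)-V_g(Dw_{0,i})|^2}{(\alpha+|u_i-w_{0,i}|)^\xi}\,dx\leqslant c\,\frac{\alpha^{1-\xi}}{\xi-1}\left[\fint_{B_R(x_0)}|f_i|\,dx+D\Psi_2(B_R(x_0))\right],
\]
with $c=c(data)$. Next I would pass to the limit $i\to\infty$ along the subsequence for which $Du_i\to Du$, $Dw_{0,i}\to Dw_0$, $u_i\to u$ and $w_{0,i}\to w_0$ hold a.e.\ in $B_R(x_0)$ (the convergence of the gradients following from $W^{1,1}$-convergence via F.~Riesz's subsequence theorem, the remaining ones as above). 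The integrands on the left are nonnegative and converge a.e.\ to $|V_g(Du)-V_g(Dw_0)|^2(\alpha+|u-w_0|)^{-\xi}$, so Fatou's lemma gives
\[
\fint_{B_R(x_0)}\frac{|V_g(Du)-V_g(Dw_0)|^2}{(\alpha+|u-w_0|)^\xi}\,dx\leqslant\liminf_{i\to\infty}\fint_{B_R(x_0)}\frac{|V_g(Du_i)-V_g(Dw_{0,i})|^2}{(\alpha+|u_i-w_{0,i}|)^\xi}\,dx.
\]
On the right-hand side I would then invoke $\fint_{B_R(x_0)}|f_i|\,dx=|B_R(x_0)|^{-1}\int_{B_R(x_0)}|f_i|\,dx$ together with $\limsup_{i\to\infty}\int_{B_R(x_0)}|f_i|\,dx\leqslant|\mu|(\overline{B_R(x_0)})$, and rewrite both terms in the normalised form appearing in the statement, absorbing the dimensional constant $|B_1|^{-1}$ into $c$; this produces the claimed bound $c\,\frac{\alpha^{1-\xi}}{\xi-1}\big[R^{-n}|\mu|(\overline{B_R(x_0)})+R^{-n}D\Psi_2(B_R(x_0))\big]$.

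The only point requiring genuine care is the simultaneous passage to the limit in the numerator and the denominator of the quotient. This is not an obstacle in practice: because the quotient is nonnegative, one needs only a.e.\ convergence of $Du_i$, $Dw_{0,i}$, $u_i$ and $w_{0,i}$ together with Fatou's lemma, and no uniform integrability of the quotients is needed. The accompanying input, the stability $w_{0,i}\to w_0$ under convergence of the boundary data of the single obstacle problem, requires no estimate beyond those already proved in Lemma~\ref{dudw1-} and the present section, so the proof reduces entirely to the approximation-plus-Fatou argument sketched above.
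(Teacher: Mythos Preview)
Your proposal is correct and follows essentially the same approximation-plus-Fatou argument that the paper intends: the paper gives no separate proof for Corollary~\ref{coro1} and simply declares it ``analogous to the implications of Corollary~\ref{dudw1c}'', whose proof consists precisely of invoking Definition~\ref{opdy}, F.~Riesz's theorem, and Fatou's lemma. Your treatment is in fact more careful than the paper's, since you make explicit the passage through the approximating comparison maps $w_{0,i}$ and the stability $w_{0,i}\to w_0$ that the paper leaves tacit.
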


\begin{lemma}\label{bjuw1}
Under the hypothesis of Lemma \ref{dudw1}, then there exists $c=c(data)$ such that
\begin{equation*}
\fint_{B_{R}(x_0)}\frac{|V_{g}(Dw_0)-V_{g}(Dw_1)|^{2}}{(\alpha+|w_0-w_1|)^{\xi}}dx\leqslant c\frac{\alpha^{1-\xi}}{\xi-1}\frac{D\Psi_2(B_{R}(x_0))}{R^n}
\end{equation*}
for $\alpha>0$ and $\xi>1$.
\end{lemma}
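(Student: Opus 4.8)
The plan is to mimic the scheme of Lemma \ref{bjuw1-}, but now comparing the homogeneous obstacle solution $w_0$ (solving \eqref{bjw1-}) with the genuinely homogeneous obstacle solution $w_1$ (solving \eqref{bjw1}), so that the right-hand side no longer contains the $|f|$-term and only the obstacle defect $\operatorname{div}(a(x,D\psi_2))$ survives. Concretely, for $\alpha>0$ and $\xi>1$ I would set
\begin{equation*}
\eta_{\pm}:=\frac{1}{\xi-1}\left[1-\left(1-\frac{(w_0-w_1)_{\pm}}{\alpha}\right)^{1-\xi}\right]\in W_{0}^{1,G}(B_R(x_0))\cap L^{\infty}(B_R(x_0)),\qquad \eta_{\pm}\geqslant 0,
\end{equation*}
exactly the test functions used in Lemma \ref{bjuw1-} (and in \cite[Lemma 5.1]{byun1}). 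The admissibility bookkeeping is the point that must be checked with care: since $w_0\geqslant\psi_1$ and $w_1\geqslant\psi_1$ a.e. in $B_R(x_0)$, one has $w_0-\alpha\eta_{+}\geqslant\min\{w_0,w_1\}\geqslant\psi_1$ and $w_1-\alpha\eta_{-}\geqslant\min\{w_0,w_1\}\geqslant\psi_1$, so the perturbed functions remain above the lower obstacle $\psi_1$; this is all that is required here because \eqref{bjw1-} tested against $w_1$ needs only $v\geqslant\psi_1$, and \eqref{bjw1} likewise needs only $v\geqslant\psi_1$. (Unlike Lemma \ref{bjuw1-}, no upper-obstacle constraint enters, since \eqref{bjw1-} is a single-obstacle problem for $w_0$.)

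Next I would split $B_R(x_0)$ into $\{w_0\geqslant w_1\}$ and $\{w_0<w_1\}$ and argue on the first set (the second being symmetric). On $\{w_0\geqslant w_1\}$ I would choose $v=w_0-\alpha\eta_{+}$ in the obstacle inequality \eqref{bjw1-} for $w_0$ and $\bar v=w_1+\alpha\eta_{+}$ in \eqref{bjw1} for $w_1$, add the two inequalities, and use the ellipticity–monotonicity equivalence \eqref{vgs}, namely $[a(x,Dw_0)-a(x,Dw_1)]\cdot(Dw_0-Dw_1)\approx|V_g(Dw_0)-V_g(Dw_1)|^2$. Since $D\eta_{+}=\alpha^{-1}(1-(w_0-w_1)_{+}/\alpha)^{-\xi}D(w_0-w_1)_{+}$, the left-hand side reproduces precisely the weighted quantity $|V_g(Dw_0)-V_g(Dw_1)|^2(\alpha+|w_0-w_1|)^{-\xi}$ up to the constant $\alpha^{1-\xi}$ (on $\{w_0\geqslant w_1\}$ one has $\alpha-(w_0-w_1)_{+}$ comparable to $\alpha+|w_0-w_1|$ only after the substitution $1-(w_0-w_1)_+/\alpha$; here I follow the exact algebra of \cite[Lemma 5.1]{byun1} and Lemma \ref{bjuw1-}). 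The right-hand side collapses to
\begin{equation*}
c\int_{B_R(x_0)}\alpha^{1-\xi}\eta_{+}\,\bigl|\operatorname{div}(a(x,D\psi_2))\bigr|\,dx\leqslant \frac{c\,\alpha^{1-\xi}}{\xi-1}\,D\Psi_2(B_R(x_0)),
\end{equation*}
where I used $0\leqslant\eta_{+}\leqslant\frac{1}{\xi-1}$. Dividing by $|B_R(x_0)|\approx R^n$ and combining the two regions gives the claim.

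The only genuine obstacle is the algebraic identity tying $D\eta_{\pm}$ and the cut-off weight to $|V_g(Dw_0)-V_g(Dw_1)|^2(\alpha+|w_0-w_1|)^{-\xi}$ with the stated constant $\alpha^{1-\xi}/(\xi-1)$, together with the verification that the test functions are admissible — i.e. that perturbing by $\pm\alpha\eta_{\pm}$ does not violate $v\geqslant\psi_1$; both are handled exactly as in Lemma \ref{bjuw1-} and \cite[Lemma 5.1]{byun1}, so the proof reduces to transcribing that argument with $(u,w_0,|f|+|\operatorname{div}(a(x,D\psi_2))|)$ replaced by $(w_0,w_1,|\operatorname{div}(a(x,D\psi_2))|)$. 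Everything else (the division by $R^n$, Remark \ref{remark1}-type $\triangle_2$ manipulations) is routine.
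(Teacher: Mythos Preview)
Your proposal is correct and follows essentially the same route as the paper: the paper defines the identical test functions $\eta_{\pm}$, inserts $v=w_0\pm\alpha\eta_{\mp}$ into \eqref{bjw1-} and $\varphi=w_1\pm\alpha\eta_{\pm}$ into \eqref{bjw1}, adds, and uses \eqref{vgs} together with $0\leqslant\eta_{\pm}\leqslant\frac{1}{\xi-1}$ to reach the bound. Your additional remark that only the lower obstacle constraint $v\geqslant\psi_1$ is relevant here (so no upper-obstacle bookkeeping is needed) is exactly the point, and your admissibility checks $w_0-\alpha\eta_{+}\geqslant\min\{w_0,w_1\}\geqslant\psi_1$, $w_1-\alpha\eta_{-}\geqslant\min\{w_0,w_1\}\geqslant\psi_1$ are the right ones.
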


\begin{proof}
Let
$$\eta_{\pm}:=\frac{1}{\xi-1}\left[1-\left(1-\frac{(w_0-w_1)_{\pm}}{\alpha}\right)^{1-\xi}\right],$$
we test the inequality \eqref{bjw1-} and the equation \eqref{bjw1} with $v=w_0\pm\alpha \eta_{\mp}\geqslant \psi_1$ and $\varphi=w_1\pm\alpha\eta_{\pm}\geqslant \psi_1$ respectively, then
\begin{eqnarray*}
\int_{B_{R}(x_0)}\frac{|V_{g}(Dw_0)-V_{g}(Dw_1)|^{2}}{(\alpha+|w_0-w_1|)^{\xi}}dx
&\approx& \int_{B_{R}(x_0)} \frac{[a(x,Dw_0)-a(x,Dw_1)]\cdot(Dw_0-Dw_1)}{(\alpha+|w_0-w_1|)^{\xi}}dx \\
&\leq& c\int_{B_{R}(x_0)}\alpha^{1-\xi}(\eta_{+}+\eta_{-})|\operatorname{div}a(x,D\psi_2)|dx \\
&\leqslant& c\frac{\alpha^{1-\xi}}{\xi-1}D\Psi_2(B_{R}(x_0))
\end{eqnarray*}
and the proof is complete.
\end{proof}

Analogous to the demonstration of  Lemma \ref{bjuw1-} and Lemma \ref{bjuw1},  the subsequent lemma is presented.
\begin{lemma}\label{bjw1w2}
Under the hypothesis of Lemma \ref{dw1w2} and Lemma \ref{dw2w3}, then there exists $c=c(data)$ such that
\begin{equation*}
\fint_{B_{R}(x_0)}\frac{|V_{g}(Dw_1)-V_{g}(Dw_2)|^{2}}{(\alpha+|w_1-w_2|)^{\xi}}dx\leqslant c\frac{\alpha^{1-\xi}}{\xi-1}\frac{D\Psi_1(B_{R}(x_0))}{R^n}
\end{equation*}
\begin{equation*}
\fint_{B_{R}(x_0)}\frac{|V_{g}(Dw_2)-V_{g}(Dw_3)|^{2}}{(\alpha+|w_2-w_3|)^{\xi}}dx\leqslant c\frac{\alpha^{1-\xi}}{\xi-1}\frac{D\Psi_1(B_{R}(x_0))}{R^n}
\end{equation*}
for $\alpha>0$ and $\xi>1$.
\end{lemma}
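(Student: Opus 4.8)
The plan is to mimic the proof strategy of Lemma~\ref{bjuw1-} and Lemma~\ref{bjuw1}, since the statement is explicitly phrased as ``analogous'' to those. The key point is that the two comparison pairs $(w_1,w_2)$ and $(w_2,w_3)$ differ only by the divergence term $\operatorname{div}(a(x,D\psi_1))$, which plays the role that $f + \operatorname{div}(a(x,D\psi_2))$ (resp.\ $\operatorname{div}(a(x,D\psi_2))$) played in the earlier lemmas. First I would fix $\alpha>0$ and $\xi>1$ and introduce the auxiliary functions
\begin{equation*}
\eta_{\pm}:=\frac{1}{\xi-1}\left[1-\left(1-\frac{(w_1-w_2)_{\pm}}{\alpha}\right)^{1-\xi}\right]
\end{equation*}
for the first inequality (and the analogous one with $w_2-w_3$ in place of $w_1-w_2$ for the second). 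As in Lemma~\ref{bjuw1-}, these satisfy $\eta_{\pm}\in W_0^{1,G}(B_R(x_0))\cap L^\infty(B_R(x_0))$, $\eta_{\pm}\geq 0$, and — using $w_2\geq\psi_1$ a.e.\ (established inside the proof of Lemma~\ref{dw1w2}) together with $w_1\geq\psi_1$ — one checks $w_1+\alpha\eta_-\geq\min\{w_1,w_2\}\geq\psi_1$ and $w_1-\alpha\eta_+\geq\min\{w_1,w_2\}\geq\psi_1$, so that $v=w_1\pm\alpha\eta_{\mp}$ is admissible in the obstacle inequality \eqref{bjw1}, while $\varphi=\pm\alpha\eta_{\pm}$ is an admissible (unconstrained) test function for the equation \eqref{bjw2}.

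Next I would subtract the two formulations. Testing \eqref{bjw1} with $v=w_1\pm\alpha\eta_{\mp}$ and \eqref{bjw2} with the test function $\pm\alpha\eta_{\pm}$ and adding, the left-hand side produces, via \eqref{vgs},
\begin{equation*}
\int_{B_R(x_0)}\frac{[a(x,Dw_1)-a(x,Dw_2)]\cdot(Dw_1-Dw_2)}{(\alpha+|w_1-w_2|)^{\xi}}dx\approx \int_{B_R(x_0)}\frac{|V_g(Dw_1)-V_g(Dw_2)|^2}{(\alpha+|w_1-w_2|)^{\xi}}dx,
\end{equation*}
where one must separately handle the sets $\{w_1\geq w_2\}$ and $\{w_1<w_2\}$ exactly as in Lemma~\ref{bjuw1-}, noting that $\partial_{x_i}\eta_{\pm}$ carries the correct sign and weight $(\alpha+|w_1-w_2|)^{-\xi}$. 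The right-hand side collapses to a single term $c\int_{B_R(x_0)}\alpha^{1-\xi}(\eta_++\eta_-)\,|\operatorname{div}(a(x,D\psi_1))|\,dx$ because the $a(x,D\psi_1)$ contributions from \eqref{bjw2} survive while the obstacle inequality \eqref{bjw1} contributes a nonpositive term. Using $0\leq\eta_{\pm}\leq \frac{1}{\xi-1}$ then yields the bound $c\frac{\alpha^{1-\xi}}{\xi-1}\int_{B_R(x_0)}|\operatorname{div}(a(x,D\psi_1))|\,dx=c\frac{\alpha^{1-\xi}}{\xi-1}D\Psi_1(B_R(x_0))$; dividing by $|B_R(x_0)|\approx R^n$ gives the claimed estimate. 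The second inequality is proved identically with $(w_2,w_3)$ replacing $(w_1,w_2)$: here \emph{both} \eqref{bjw2} and \eqref{bjw3} are equations, $\varphi=\pm\alpha\eta_{\pm}$ is admissible in each, and only the $\operatorname{div}(a(x,D\psi_1))$ term on the right of \eqref{bjw2} remains, again bounded by $c\frac{\alpha^{1-\xi}}{\xi-1}D\Psi_1(B_R(x_0))$ after using $|\eta_{\pm}|\leq\frac{1}{\xi-1}$.

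The main obstacle — really the only nontrivial point — is verifying the obstacle constraints for the comparison functions in the first inequality: one needs $w_1-\alpha\eta_+\geq\psi_1$ and $w_1+\alpha\eta_-\geq\psi_1$ so that these are legitimate competitors in \eqref{bjw1}, which forces using that $w_2\geq\psi_1$ (from the already-proved part of Lemma~\ref{dw1w2}) rather than just $w_1\geq\psi_1$. For the second inequality this difficulty disappears entirely since \eqref{bjw3} is an unconstrained equation. Everything else — the sign bookkeeping on $\{w_1\geq w_2\}$ vs.\ $\{w_1<w_2\}$, the equivalence \eqref{vgs}, and the elementary bound on $\eta_{\pm}$ — is routine and identical to Lemma~\ref{bjuw1-}, so I would simply refer to that proof for those details.
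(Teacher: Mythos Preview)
Your proposal is correct and follows exactly the approach the paper intends: the paper itself gives no detailed proof of Lemma~\ref{bjw1w2}, merely stating it is ``analogous to the demonstration of Lemma~\ref{bjuw1-} and Lemma~\ref{bjuw1}'', and you have carried out precisely that analogy, including the key observation that the admissibility of $v=w_1\pm\alpha\eta_\mp$ in \eqref{bjw1} requires $w_2\geq\psi_1$ (already shown in the proof of Lemma~\ref{dw1w2}). The remaining computations --- the role of $\operatorname{div}(a(x,D\psi_1))$ as the sole surviving right-hand side term, the bound $0\leq\eta_\pm\leq\frac{1}{\xi-1}$, and the use of \eqref{vgs} --- are handled correctly and mirror Lemma~\ref{bjuw1} line by line.
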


Consulting Lemma 3.5, Lemma 3.6 and Lemma 3.7 in reference \cite{xiong3} leads us to the following lemma.
 \begin{lemma} \label{xsbj}
Suppose that the  assumptions of \eqref{a(x)1}, \eqref{a(x)3} and \eqref{dytj}  are satisfied, let  $B_{2R}(x_0)\subseteq \Omega$, $w_3\in W^{1,G}(B_{2R}(x_0))$ be the weak solution of \eqref{bjw3} and  $w_4\in W^{1,G}(B_{R}(x_0))$ be the weak solution of
 \begin{equation}\label{abar}
\left\{\begin{array}{r@{\ \ }c@{\ \ }ll}
\operatorname{div}\left(\overline{a}_{B_{R}(x_0)}(Dw_4)\right)&=&0  \ \ \ \ \ \mbox{in}\ \ B_{R}(x_0) \,, \\[0.05cm]
w_4&=&w_3  \ \ \ \mbox{on}\ \ \partial B_{R}(x_0) \,. \\[0.05cm]
\end{array} \right.
\end{equation}
\item[\rm (i)]Then there exists a constant $c=c(data)>0$ such that
 \begin{equation*}
 \fint_{B_{R}(x_0)}|Dw_3-Dw_4|dx\leqslant c\omega(R)^{\frac{1}{1+s_g}}\fint_{B_{2R}(x_0)}|Dw_3|dx.
 \end{equation*}
\item[\rm (ii)] Then there exist constants $\widehat{R}=\widehat{R}(data,\beta,\omega(\cdot))$ and $c=c(data,\beta)$ such that
 \begin{equation*}
 \parallel Dw_3\parallel_{L^{\infty}(B_{\frac{R}{2}}(x_0))}\leqslant c\fint_{B_{R}(x_0)}|Dw_3|dx
 \end{equation*}
 for every $ 0<R\leqslant \widehat{R}$ and $\beta$ is as in Lemma \ref{zcth}.

\item[\rm (iii)] For any $\sigma\in(0,1)$,  $0<R\leqslant \overline{R}=\overline{R}(data,\sigma,\omega(\cdot),c_0,\beta)$. If
 \begin{equation*}
 \sup_{B_{\frac{R}{2}}(x_0)}|Dw_3|\leqslant c_0 \lambda, \ \ \ \ where \ \ \ c_0\geqslant1, \ \lambda>0,
 \end{equation*}
then  there exists a constant $0<\overline{\delta}=\overline{\delta}(data,\sigma,c_0,\beta)<\frac{1}{300}$ such that
 \begin{equation*}
 osc_{B_{\overline{\delta}R}(x_0)}Dw_3\leqslant \sigma \lambda \ \ \ a.e.
 \end{equation*}
 where $\beta$ is as in Lemma \ref{zcth}.
 \end{lemma}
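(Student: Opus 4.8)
The plan is to treat Lemma \ref{xsbj} exactly as a transcription of Lemma 3.5--3.7 from \cite{xiong3}, observing that $w_3$ solves the genuine equation $-\operatorname{div}(a(x,Dw_3))=0$ with $a(\cdot,\cdot)$ satisfying \eqref{a(x)1}, \eqref{a(x)3} and the Dini-$BMO$ condition \eqref{dytj}, so none of the obstacle structure enters here; only the regularity theory for the homogeneous equation with Dini-$BMO$ coefficients is used.

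For part (i), I would freeze the coefficient by comparing $w_3$ with the solution $w_4$ of the averaged equation \eqref{abar}. Testing the two weak formulations with $w_3-w_4\in W^{1,G}_0(B_R(x_0))$ and using the monotonicity \eqref{vgs} together with \eqref{gvgs}, the difference $a(x,Dw_3)-\overline{a}_{B_R(x_0)}(Dw_3)$ is controlled pointwise by $\theta(a,B_R(x_0))(x)\,g(|Dw_3|)$ by the very definition of $\theta$. One then absorbs the ellipticity term on the left, applies H\"older's inequality with the exponent $\gamma'$ from \eqref{a(x)2} on the $\theta$-factor and the higher-integrability exponent $\gamma$ from \cite[Theorem 9]{de1} on $g(|Dw_3|)$, and converts the resulting $g$-quantities back to $|D\cdot|$ using Lemma \ref{ag} (hence the exponent $\frac{1}{1+s_g}$ on $\omega(R)$); the reverse-H\"older/self-improving property of $Dw_3$ upgrades the $B_R$ average to the $B_{2R}$ average on the right-hand side. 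Part (ii) is the a priori Lipschitz bound: combine the De Giorgi-type sup estimate and the excess-decay estimate of Lemma \ref{zcth} for the \emph{frozen} operator $\overline{a}_{B_R(x_0)}$ with the comparison estimate from (i), and iterate over a dyadic sequence of radii; the Dini summability \eqref{dytj} makes the telescoping series $\sum_k \omega(2^{-k}R)^{1/(1+s_g)}$ converge, which yields the uniform $L^\infty$ bound for $Dw_3$ on $B_{R/2}(x_0)$ once $R\le\widehat R$ is small enough that the tail of that series is controlled.

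For part (iii), the oscillation decay, I would again iterate the frozen excess-decay estimate of Lemma \ref{zcth}: on each scale the excess of $Dw_3$ decays by a factor $(\rho/R)^\beta$ up to the coefficient error $\omega(\rho)^{1/(1+s_g)}\sup|Dw_3|\lesssim\omega(\rho)^{1/(1+s_g)}c_0\lambda$ coming from (i). Summing the geometric-plus-Dini series and choosing the number of iterations large (equivalently $\overline\delta$ small, depending on $\sigma$, $c_0$, $\beta$ and $data$) and $R\le\overline R$ small enough that the accumulated coefficient error is $\le \tfrac12\sigma\lambda$, one gets $\operatorname{osc}_{B_{\overline\delta R}(x_0)}Dw_3\le\sigma\lambda$; the bound $\overline\delta<\frac1{300}$ is simply a normalization that can always be arranged by taking one more iteration.

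The main obstacle is bookkeeping rather than conceptual: one must carry the two distinct exponents ($\gamma'$ from the $BMO$-type smallness and $\gamma$ from higher integrability) correctly through the comparison estimate, and then control the Dini tails uniformly in $R$ so that the iteration constants in (ii) and (iii) depend only on the advertised quantities. Since all three statements are quoted verbatim from \cite{xiong3} and $w_3$ solves a coefficient equation of exactly the type treated there, the cleanest route is to invoke those results directly after noting that \eqref{bjw3} puts us precisely in their setting; I would write the proof as that reduction, sketching the freezing argument for (i) only to the extent needed to make the dependence of constants transparent.
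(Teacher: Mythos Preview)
Your proposal is correct and matches the paper's approach exactly: the paper gives no proof at all, simply prefacing the lemma with the sentence ``Consulting Lemma 3.5, Lemma 3.6 and Lemma 3.7 in reference \cite{xiong3} leads us to the following lemma,'' which is precisely the reduction you describe. Your sketch of the freezing/comparison argument for (i) and the iteration for (ii)--(iii) is more detailed than what the paper provides, but the underlying route---observe that $w_3$ solves the plain homogeneous equation \eqref{bjw3} with Dini-$BMO$ coefficients and invoke \cite{xiong3} directly---is identical.
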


Next, assume that $x_0\in \Omega$ is the Lebesgue's point of $Du$, $B_{2R}(x_0)\subseteq \Omega$ and  we define
\begin{equation}\label{biii2}
B_R:=B_R(x_0), \ \ \ B_{i}:=B_{r_i}(x_0), \ \ \ \  r_i=\delta^{i}r, \ \ \ \ \
\end{equation}
\begin{equation*}
a_i:=|(Du)_{B_i}|=\vert\fint_{B_i}Du dx \vert, \ \ \ \ E_{i}:=E(Du,B_i)=\fint_{B_i}\vert Du-(Du)_{B_i} \vert dx,
\end{equation*}
where $\delta \in (0,\frac{1}{4})$, $0<r<\min\left\lbrace R, \overline{R}, \widehat{R}\right\rbrace$ will be determined later and $\overline{R}, \widehat{R}$ is as in Lemma \ref{xsbj}.
Moreover, assume that $u \in W^{1,1}(\Omega)$ with $\psi_2 \geqslant u\geqslant \psi_1$ a.e. is a limit of approximating solutions to $OP(\psi_1; \psi_2; \mu)$ with measure data $\mu \in \mathcal{M}_{b}(\Omega)$(in the sense of Definition \ref{opdy}), the sequence of functions  $w_0^i, w_1^i, w_2^i, w_3^i,w_4^i \in W^{1,G}(B_i)$ satisfy separately
\begin{equation*}
\left\{\begin{array}{r@{\ \ }c@{\ \ }ll}
&\int_{B_i}&  a(x, Dw_0^i)\cdot D(v-w_0^i)dx \geqslant \int_{B_i}  a(x, D\psi_2)\cdot D(v-w_0^i)dx  \,, \\[0.05cm]
& \mbox{for}&  \forall \ v \in w_0^i+W_{0}^{1,G}(B_i) \ \mbox{with} \ v\geqslant \psi_1 \  a.e. \ \mbox{in}\ \ B_i \,, \\[0.05cm]
&w_0^i&\geqslant \psi_1, \ \ \  \ \ \ \ \ \ \ \ a.e. \ \mbox{in}\ \ B_i\,, \\[0.05cm]
&w_0^i&=u \ \ \ \ \ \ \ \ \ \ \ \ \ \ \ \ \ \mbox{on}\ \ \partial B_i \,,
\end{array}\right.
\end{equation*}

\begin{equation*}
\left\{\begin{array}{r@{\ \ }c@{\ \ }ll}
&\int_{B_i}&  a(x, Dw_1^i)\cdot D(v-w_1^i)dx \geqslant0 \,, \\[0.05cm]
&\mbox{for} &\ \ \forall \ v \in w_1^i+W_{0}^{1,G}(B_i) \ \mbox{with} \ v\geqslant \psi_1 \  a.e. \ \mbox{in}\ \ B_i \,, \\[0.05cm]
&w_1^i&\geqslant \psi_1,  \ \ \  \ \ \ \ \ \ \ \ \ a.e. \ \mbox{in}\ \ B_i\,, \\[0.05cm]
&w_1^i&=w_0^i \ \ \ \ \ \ \ \ \ \ \ \ \ \ \ \ \ \mbox{on}\ \ \partial B_i \,,
\end{array}\right.
\end{equation*}

\begin{equation*}
\left\{\begin{array}{r@{\ \ }c@{\ \ }ll}
-\operatorname{div}\left( a(x,Dw_2^i)\right)&=&-\operatorname{div}\left( a(x,D\psi_1)\right) \ \ \ \ \mbox{in}\ \ B_i\,, \\[0.05cm]
w_2^i&=&w_1^i  \ \ \ \ \ \ \ \ \ \ \ \ \ \ \ \ \ \ \ \ \  \ \  \mbox{on}\ \ \partial B_i\,, \\[0.05cm]
\end{array}\right.
\end{equation*}

\begin{equation*}
\left\{\begin{array}{r@{\ \ }c@{\ \ }ll}
-\operatorname{div}\left( a(x,Dw_3^i)\right)&=&0 \ \ \ \ \mbox{in}\ \ B_i\,, \\[0.05cm]
w_3^i&=&w_2^i  \ \   \mbox{on}\ \ \partial B_i\,, \\[0.05cm]
\end{array}\right.
\end{equation*}

\begin{equation*}
\left\{\begin{array}{r@{\ \ }c@{\ \ }ll}
-\operatorname{div}\left(  \overline{a}_{B_i}(Dw_4^i)\right)&=&0  \ \ \ \  \ \mbox{in}\ \ \frac{1}{4}B_{i} \,, \\[0.05cm]
w_4^i&=&w_3^i  \ \ \ \mbox{on}\ \ \partial \frac{1}{4}B_{i} \,. \\[0.05cm]
\end{array}\right.
\end{equation*}

 Then we can obtain the following lemma.
\begin{lemma}\label{glam-0}
 Under the assumptions  \eqref{a(x)1} and \eqref{a(x)3}, suppose that for a certain index $i \in \mathbb{N}$ and for a number $\lambda>0$ there holds
 \begin{eqnarray}\label{tjgd}\nonumber
 &&A_{i-1}^{1}:=g^{-1}\left(\frac{|\mu|(\overline{B_{i-1})}}{r_{i-1}^{n-1}} \right)\leqslant \lambda, \ \ \  \ \ \
  A_{i-1}^{2}:=g^{-1}\left(\frac{D\Psi_1(B_{i-1})}{r_{i-1}^{n-1}} \right)\leqslant \lambda, \\
 && A_{i-1}^{3}:=g^{-1}\left(\frac{D\Psi_2(B_{i-1})}{r_{i-1}^{n-1}} \right)\leqslant \lambda, \ \ \ \ \  \frac{\lambda}{H}\leqslant |Dw_3^{i-1}|\leqslant H\lambda \ \ in \ B_i
 \end{eqnarray}
 for a constant $H\geqslant1$. Then there exists a constant $c=c(data,H,\delta)$ such that
\begin{equation*}
 \fint_{B_i}|Du-Dw_0^{i}|dx\leqslant c \frac{\delta^{-n}\lambda}{g(\lambda)}\left[ \frac{|\mu|(\overline{B_{i-1})}}{r_{i-1}^{n-1}}+ \frac{D\Psi_1(B_{i-1})}{r_{i-1}^{n-1}}+ \frac{D\Psi_2(B_{i-1})}{r_{i-1}^{n-1}}\right].
\end{equation*}
\end{lemma}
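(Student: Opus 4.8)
The plan is to derive the estimate from the weighted energy inequality of Corollary~\ref{coro1} (applied on $B_i$ in place of $B_R(x_0)$), combined with the information in \eqref{tjgd} that $|Dw_3^{i-1}|\approx\lambda$ on $B_i$, which is exactly what produces the scaling factor $\lambda/g(\lambda)$. Recall from \eqref{vgz1} and \eqref{a(x)3} that $|V_g(z_1)-V_g(z_2)|^2\approx g'(|z_1|+|z_2|)|z_1-z_2|^2\approx\frac{g(|z_1|+|z_2|)}{|z_1|+|z_2|}|z_1-z_2|^2$, and that $g'(t)\approx g(t)/t$ is essentially increasing by \eqref{a(x)3}. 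Hence on the part of $B_i$ where $Du$ stays close to $Dw_3^{i-1}$ — so that $|Du|\gtrsim\lambda$ — one has $g'(|Du|+|Dw_0^i|)\gtrsim g(\lambda)/\lambda$, and the weighted inequality upgrades to an $L^1$ bound for $Du-Dw_0^i$ carrying the correct $\lambda/g(\lambda)$ weight.

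Concretely, I would set $T:=\fint_{B_i}|Du-Dw_0^i|\,dx$ and $\alpha:=r_i\fint_{B_i}|Du-Dw_0^i|\,dx$, fix $\xi>1$ with $\xi<\min\{(s_g+1)/s_g,\,n/(n-1)\}$ (admissible both in Corollary~\ref{coro1} and in the $L^\xi$ estimates of Corollary~\ref{dudw1c}), and record that Sobolev--Poincar\'e on $B_i$ (legitimate since $u-w_0^i\in W_0^{1,G}(B_i)\subset W_0^{1,1}(B_i)$ by \eqref{lg}) gives $\fint_{B_i}|u-w_0^i|^\xi\,dx\lesssim\alpha^\xi$, hence $\fint_{B_i}(\alpha+|u-w_0^i|)^\xi\,dx\lesssim\alpha^\xi$. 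Split $B_i=(B_i\setminus E)\cup E$ with $E:=\{x\in B_i:|Du-Dw_3^{i-1}|>\lambda/(2H)\}$. On $B_i\setminus E$ the bound $|Dw_3^{i-1}|\ge\lambda/H$ from \eqref{tjgd} forces $|Du|\ge\lambda/(2H)$, so Lemma~\ref{ag} lets us replace $g'(|Du|+|Dw_0^i|)$ by a multiple of $g(\lambda)/\lambda$ (with the multiple depending on $H$); a Cauchy--Schwarz splitting against $(\alpha+|u-w_0^i|)^{\xi/2}$, Corollary~\ref{coro1}, the Sobolev--Poincar\'e bound above, and Young's inequality then yield
\[
\frac{1}{|B_i|}\int_{B_i\setminus E}|Du-Dw_0^i|\,dx\le\tfrac14 T+c_H\,\frac{\lambda}{g(\lambda)}\Big[\frac{|\mu|(\overline{B_i})}{r_i^{n-1}}+\frac{D\Psi_2(B_i)}{r_i^{n-1}}\Big].
\]

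On the exceptional set I would split $E=E_1\cup E_2$ with $E_1:=E\cap\{|Du-Dw_0^i|>\lambda\}$ and $E_2:=E\cap\{|Du-Dw_0^i|\le\lambda\}$. On $E_1$, Lemma~\ref{ag}(1) gives $|Du-Dw_0^i|\le\frac{\lambda}{g(\lambda)}g(|Du-Dw_0^i|)$, so Corollary~\ref{dudw1c} with $\chi=0$ (for which $g_0\equiv g$, $g_0(A_1)=|\mu|(\overline{B_i})/r_i^{n-1}$, $g_0(A_3)=D\Psi_2(B_i)/r_i^{n-1}$) controls $\frac{1}{|B_i|}\int_{E_1}|Du-Dw_0^i|\,dx$ by $c\frac{\lambda}{g(\lambda)}\big[\frac{|\mu|(\overline{B_i})}{r_i^{n-1}}+\frac{D\Psi_2(B_i)}{r_i^{n-1}}\big]$. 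On $E_2$ we use $\frac{1}{|B_i|}\int_{E_2}|Du-Dw_0^i|\,dx\le\lambda\,|E|/|B_i|$ and estimate $|E|$ by Chebyshev together with a sufficiently sharp control of $\fint_{B_i}|Du-Dw_3^{i-1}|\,dx$, obtained by decomposing $Du-Dw_3^{i-1}$ along the comparison chain $u\rightsquigarrow w_0^{i-1}\rightsquigarrow w_1^{i-1}\rightsquigarrow w_2^{i-1}\rightsquigarrow w_3^{i-1}$ on $B_{i-1}$ and estimating each link via Corollary~\ref{dudw1c}, Lemmas~\ref{dudw1}, \ref{dw1w2}, \ref{dw2w3} and their weighted counterparts (Lemmas~\ref{bjuw1-}, \ref{bjuw1}, \ref{bjw1w2} and Corollary~\ref{coro1}), always invoking \eqref{tjgd} so that every error is controlled in terms of quantities $\le g(\lambda)$. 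Finally, adding the three contributions, reabsorbing $\tfrac14 T$, and passing from $B_i$ to $B_{i-1}$ via $B_i\subset B_{i-1}$ and $r_i=\delta r_{i-1}$ — so that $\frac{|\mu|(\overline{B_i})}{r_i^{n-1}}\le\delta^{-(n-1)}\frac{|\mu|(\overline{B_{i-1}})}{r_{i-1}^{n-1}}$ and likewise for $D\Psi_2$ — and using $D\Psi_1\ge0$, produces the assertion with $c=c(data,H,\delta)$.

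The main obstacle is this last step: propagating the size information $|Dw_3^{i-1}|\approx\lambda$ back through the four-fold comparison chain — precisely the interplay between the measure $\mu$ and the two obstacles $\psi_1,\psi_2$ emphasized in the introduction — while keeping the factor $\lambda/g(\lambda)$. One cannot afford the crude bound $\fint_{B_i}|Du-Dw_0^i|\,dx\le c[A_1+A_3]$ coming from the $\chi=-1$ case of Corollary~\ref{dudw1c}, since the resulting $g^{-1}(\cdot)$-type terms are not linearly comparable to the right-hand side when the data are small; the resolution is to play the $L^\xi$ higher-integrability comparison estimates off against the weighted energy estimates, exactly as in the corresponding arguments of \cite{b13,xiong3}, adapted here to the double obstacle setting.
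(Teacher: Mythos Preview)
Your level-set decomposition is a genuinely different route from the paper's, and the $B_i\setminus E$ and $E_1$ parts work as you describe. The gap is in the $E_2$ step. Plain Chebyshev gives $\lambda|E|/|B_i|\le 2H\fint_{B_i}|Du-Dw_3^{i-1}|\,dx$, so you now need
\[
\fint_{B_i}|Du-Dw_3^{i-1}|\,dx\le c\,\frac{\lambda}{g(\lambda)}\Big[\frac{|\mu|(\overline{B_{i-1}})}{r_{i-1}^{n-1}}+\frac{D\Psi_1(B_{i-1})}{r_{i-1}^{n-1}}+\frac{D\Psi_2(B_{i-1})}{r_{i-1}^{n-1}}\Big],
\]
which is essentially the lemma itself one scale up. The comparison chain on $B_{i-1}$ with $\chi=-1$ only yields the bound $c\delta^{-n}(A_{i-1}^1+A_{i-1}^2+A_{i-1}^3)$, and since $t\mapsto g(t)/t$ is increasing, $A\le\lambda$ does \emph{not} imply $A\le\frac{\lambda}{g(\lambda)}g(A)$; in fact the opposite inequality holds. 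The weighted energy lemmas you cite do not help here either, because converting them to $L^1$ bounds with the correct $\lambda/g(\lambda)$ weight on $B_{i-1}$ would require $|Dw_3^{i-2}|\approx\lambda$ on $B_{i-1}$, which is not part of the hypothesis~\eqref{tjgd}. The easy repair is to run Chebyshev at the $g$-level instead: $|E|/|B_i|\le c_H\,g(\lambda)^{-1}\fint_{B_i}g(|Du-Dw_3^{i-1}|)\,dx$, then bound the right-hand side by the $\chi=0$ comparison chain, which gives exactly $c\delta^{-n}[g(A_{i-1}^1)+g(A_{i-1}^2)+g(A_{i-1}^3)]$.

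For comparison, the paper avoids the level-set split altogether. It inserts the weight $h_\chi(|Dw_3^{i-1}|)/h_\chi(\lambda)\approx 1$ (valid by \eqref{tjgd}) into the integrand $|Du-Dw_0^i|$, then uses the quasi-triangle inequality for $h_\chi$ to replace $|Dw_3^{i-1}|$ by $|Dw_0^i|$, producing two terms $Q_1,Q_2$. The replacement error $Q_1$ is handled by Young's inequality for the pair $(g_\chi,g_\chi^*)$ together with the $g_\chi$-scale comparison chain; the main term $Q_2$ carries the weight $h_\chi(|Dw_0^i|)$ and is treated via Corollary~\ref{coro1} and an adaptive choice $\alpha\approx\big(\fint h_{2\chi}(|Dw_0^i|)|u-w_0^i|^\xi\big)^{1/\xi}$, with that quantity in turn estimated by Sobolev and Lemma~\ref{gudu}. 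The point of the $h_\chi$-insertion is that the factor $\lambda/g(\lambda)$ emerges automatically from $1/h_\chi(\lambda)$ after using the monotonicity of $h_\chi$ and the smallness $A_{i-1}^k\le\lambda$, so no exceptional set needs to be excised.
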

\begin{proof}
We start fixing the following quantities  $$2\chi =\frac{1}{2}\min \left\lbrace \frac{1}{s_g-1}, \frac{s_g}{(s_g-1)(n-1)},\frac{1}{n-1}\right\rbrace, \ \  \xi=1+2\chi $$ notice that $\xi <1^{*}=\frac{n}{n-1}$ and $\chi, \xi $ satisfy the conditions of Lemma \ref{dudw1-}.
From \eqref{tjgd},  it follows
\begin{eqnarray*}
&&\fint_{B_i}|Du-Dw_0^{i}|dx \\
&\leqslant& c \fint_{B_i}\frac{h_{\chi}(|Dw_3^{i-1}|)}{h_{\chi}(\lambda)}|Du-Dw_0^{i}|dx \\
&\leqslant& c \fint_{B_{i}} \frac{h_{\chi}(|Dw_0^{i}-Dw_3^{i-1}|)}{h_{\chi}(\lambda)}|Du-Dw_0^{i}|dx +c \fint_{B_{i}} \frac{h_{\chi}(|Dw_0^{i}|)}{h_{\chi}(\lambda)}|Du-Dw_0^{i}|dx \\
&:=&Q_1+Q_2.
\end{eqnarray*}
Our investigation begins by considering the estimation of  $Q_1$. We  utilize  \eqref{a(x)4}, Corollary \ref{dudw1c}, Lemma \ref{dudw1}, Lemma \ref{dw1w2}, Lemma \ref{dw2w3} and Young's inequality with conjugate functions $g_{\chi}$ and $g_{\chi}^{*}$ leading to
\begin{eqnarray*}
h_{\chi}(\lambda)Q_1 &\leqslant& c\fint_{B_i}g_{\chi}^{*}\left( \frac{g_{\chi}(|Dw_0^{i}-Dw_3^{i-1}|)}{|Dw_0^{i}-Dw_3^{i-1}|}\right)dx+c\fint_{B_i}g_{\chi}(|Du-Dw_0^{i}|)dx \\
&\leqslant& c\fint_{B_i}g_{\chi}(|Dw_0^{i}-Dw_3^{i-1}|)dx+c\fint_{B_i}g_{\chi}(|Du-Dw_0^{i}|)dx \\
&\leqslant& c\fint_{B_i}g_{\chi}(|Du-Dw_3^{i-1}|)dx+c\fint_{B_i}g_{\chi}(|Du-Dw_0^{i}|)dx \\
&\leqslant& c\fint_{B_i}g_{\chi}(|Du-Dw_0^{i}|)+g_{\chi}(|Du-Dw_0^{i-1}|)+g_{\chi}(|Dw_0^{i-1}-Dw_1^{i-1}|)\\
&+&g_{\chi}(|Dw_1^{i-1}-Dw_2^{i-1}|)
+g_{\chi}(|Dw_2^{i-1}-Dw_3^{i-1}|)dx \\
&\leqslant& c \delta^{-n}\left[ g_{\chi}(A_{i-1}^1)+g_{\chi}(A_{i-1}^2)+g_{\chi}(A_{i-1}^3)\right] .
\end{eqnarray*}
Then, by virtue of \eqref{tjgd} and the noted characteristic of $\frac{g(x)}{x}$ being a monotonically increasing function, we derive
\begin{eqnarray*}
Q_{1}&\leqslant& c\frac{\delta^{-n}\lambda}{g_{\chi}(\lambda)}\left[ g_{\chi}(A_{i-1}^1)+g_{\chi}(A_{i-1}^2)+g_{\chi}(A_{i-1}^3)\right]  \\
&=&c\frac{\delta^{-n}\lambda}{\left[ \frac{g(\lambda)}{\lambda}\right] ^{\chi}g(\lambda)}\left\lbrace \left[ \frac{g(A_{i-1}^1)}{A_{i-1}^1}\right] ^{\chi}g(A_{i-1}^1) +\left[ \frac{g(A_{i-1}^2)}{A_{i-1}^2}\right] ^{\chi}g(A_{i-1}^2) +\left[ \frac{g(A_{i-1}^3)}{A_{i-1}^3}\right] ^{\chi}g(A_{i-1}^3) \right\rbrace \\
&\leqslant& c\frac{\delta^{-n}\lambda}{g(\lambda)} \left[\frac{|\mu|(\overline{B_{i-1})}}{r_{i-1}^{n-1}}+\frac{D\Psi_1(B_{i-1})}{r_{i-1}^{n-1}} +\frac{D\Psi_2(B_{i-1})}{r_{i-1}^{n-1}} \right].
\end{eqnarray*}
 Next,  we proceed to evaluate $Q_2$, employing \eqref{vgz1} and  Corollary \ref{coro1} to obtain
\begin{eqnarray}\nonumber
h_{\chi}(\lambda)Q_2 &\leqslant& c \fint_{B_i}h_{\chi}(|Dw_0^{i}|)|Du-Dw_0^{i}|dx \\\nonumber
&\leqslant& c \fint_{B_i} \left[\frac{g(|Dw_0^{i}|)}{|Dw_0^{i}|} \right] ^{\frac{1+2\chi}{2}}|V_{g}(Du)-V_{g}(Dw_0^{i})|dx \\\nonumber
&\leqslant& c\fint_{B_i} \left[ \frac{|V_{g}(Du)-V_{g}(Dw_0^{i})|^{2}}{(\alpha+|u-w_0^{i}|)^{\xi}}\right] ^{\frac{1}{2}}[h_{2\chi}(|Dw_0^i|)(\alpha+|u-w_0^{i}|)^{\xi}]^{\frac{1}{2}}dx \\\nonumber
&\leqslant& c\left[\fint_{B_i}\frac{|V_{g}(Du)-V_{g}(Dw_0^{i})|^{2}}{(\alpha+|u-w_0^{i}|)^{\xi}} dx\right]^{\frac{1}{2}}\left[\fint_{B_i}h_{2\chi}(|Dw_0^{i})(\alpha+|u-w_0^{i}|)^{\xi}dx \right] ^{\frac{1}{2}} \\
&\leqslant& c\left[ \alpha^{1-\xi}\left( \frac{|\mu|(\overline{B_i})}{r_{i}^{n}}+\frac{D\Psi_2(B_i)}{r_{i}^{n}}\right) \right] ^{\frac{1}{2}}\left[\fint_{B_i}h_{2\chi}(|Dw_0^{i}|)(\alpha+|u-w_0^{i}|)^{\xi}dx \right] ^{\frac{1}{2}}, \label{hpL0}
\end{eqnarray}
where $\alpha>0$ to be determined.
By utilizing Corollary \ref{dudw1c}, Lemma \ref{dudw1}, Lemma \ref{dw1w2}, as well as  Lemma \ref{dw2w3} again, we  derive
\begin{eqnarray}\nonumber
\fint_{B_i}h_{2\chi}(|Dw_0^{i}|)dx&\leqslant & \fint_{B_i}h_{2\chi}(|Du-Dw_0^{i}|)+h_{2\chi}(|Du-Dw_0^{i-1}|)+h_{2\chi}(|Dw_0^{i-1}-Dw_1^{i-1}|)  \\\nonumber
&+&h_{2\chi}(|Dw_1^{i-1}-Dw_2^{i-1}|)+h_{2\chi}(|Dw_2^{i-1}-Dw_3^{i-1}|)+h_{2\chi}(|Dw_3^{i-1}|)dx  \\\nonumber
&\leqslant &ch_{2\chi}(\lambda)+c\delta^{-n}\left[ h_{2\chi}(A_{i-1}^1)+h_{2\chi}(A_{i-1}^2)+h_{2\chi}(A_{i-1}^3)\right]   \\
&\leqslant &c\delta^{-n}h_{2\chi}(\lambda).\label{h2p0}
\end{eqnarray}
Returning our attention to \eqref{hpL0}, we revisit
\begin{equation*}
Q_2\leqslant c\sqrt{\frac{\lambda}{g(\lambda)}}\left[\alpha^{1-\xi}\left( \frac{|\mu|(\overline{B_i})}{r_{i}^{n}}+\frac{D\Psi_2(B_i)}{r_{i}^{n}}\right) \right] ^{\frac{1}{2}}\left( \fint_{B_i}\frac{h_{2\chi}(|Dw_0^{i}|)}{h_{2\chi}(\lambda)}(\alpha+|u-w_0^{i}|)^{\xi}dx\right) ^{\frac{1}{2}}.
\end{equation*}
We choose
\begin{equation*}
\alpha=\left(\fint_{B_i}\frac{h_{2\chi}(|Dw_0^{i}|)}{h_{2\chi}(\lambda)}|u-w_0^{i}|^{\xi}dx \right) ^{\frac{1}{\xi}}+\sigma   \ \ \ \ \ \ \ \ for \ some \ \sigma>0.
\end{equation*}
Through the combination of \eqref{h2p0} with Young's inequality, it follows
\begin{eqnarray*}
Q_2&\leqslant& c\sqrt{\frac{\lambda}{g(\lambda)}}\left[\alpha^{1-\xi}\left( \frac{|\mu|(\overline{B_i})}{r_{i}^{n}}+\frac{D\Psi_2(B_i)}{r_{i}^{n}}\right) \right] ^{\frac{1}{2}}\left[\alpha^{\frac{\xi}{2}}\left( \fint_{B_i}\frac{h_{2\chi}(|Dw_0^{i}|)}{h_{2\chi}(\lambda)}dx\right) ^{\frac{1}{2}}+\alpha^{\frac{\xi}{2}} \right]  \\
&\leqslant& c\left[ \frac{\alpha}{r_i}\left( \frac{|\mu|(\overline{B_i})}{r_{i}^{n}}+\frac{D\Psi_2(B_i)}{r_{i}^{n}}\right)\frac{\lambda}{g(\lambda)}\right] ^{\frac{1}{2}}\left[\left( \fint_{B_i}\frac{h_{2\chi}(|Dw_0^{i}|)}{h_{2\chi}(\lambda)}dx\right) ^{\frac{1}{2}}+1 \right]  \\
&\leqslant& \varepsilon \frac{\alpha}{r_i}+c(\varepsilon)\left( \frac{|\mu|(\overline{B_i})}{r_{i}^{n}}+\frac{D\Psi_2(B_i)}{r_{i}^{n}}\right)\frac{\delta^{-n}\lambda}{g(\lambda)}.
\end{eqnarray*}
Ultimately, together with the estimation of
$Q_1$  to obtain
\begin{equation*}
\fint_{B_i}|Du-Dw_0^{i}|dx \leqslant \varepsilon \frac{\alpha}{r_i}+c\frac{\delta^{-n}\lambda}{g(\lambda)} \left[\frac{|\mu|(\overline{B_{i-1}})}{r_{i-1}^{n-1}}+\frac{D\Psi_1(B_{i-1})}{r_{i-1}^{n-1}} +\frac{D\Psi_2(B_{i-1})}{r_{i-1}^{n-1}} \right].
\end{equation*}
On the other hand, we estimate
\begin{eqnarray*}
\frac{g(\lambda)}{\lambda}\alpha &\leqslant & \left( \fint_{B_i}h_{2\chi}(|Dw_0^{i}|)|u-w_0^{i}|^{\xi}dx\right) ^{\frac{1}{\xi}}+\frac{g(\lambda)}{\lambda}\sigma \\
&\leqslant & \left( \fint_{B_i}h_{2\chi}(|Dw_3^{i-1}|)|u-w_0^{i}|^{\xi}dx\right) ^{\frac{1}{\xi}} \\
&+&\left( \fint_{B_i}h_{2\chi}(|Dw_0^{i}-Dw_3^{i-1}|)|u-w_0^{i}|^{\xi}dx\right) ^{\frac{1}{\xi}}+\frac{g(\lambda)}{\lambda}\sigma \\
&\leqslant & \mathbf{I}_{1}+\mathbf{I}_{2}+\frac{ g(\lambda)}{\lambda}\sigma.
\end{eqnarray*}
As for the estimate of $\mathbf{I}_{1}$, by \eqref{lg} to get $u-w_0^i \in W_0^{1,1}(B_i)$, then owing to the Sobolev's inequality, we have
\begin{equation*}
\frac{\mathbf{I}_{1}\lambda}{r_{i}g(\lambda)}\leqslant c\left(\fint_{B_i}\left|\frac{u-w_0^{i}}{r_i}\right|^{\xi}dx \right) ^{\frac{1}{\xi}}\leqslant c\fint_{B_i}|Du-Dw_0^{i}|dx.
\end{equation*}
For the estimate of $\mathbf{I}_{2}$, given the approximation $g(t)\approx f(t):=\int_{0}^{t}\frac{g(s)}{s}ds$ and the convexity of  $f(\cdot)$,  we assume the convexity of
$g(\cdot)$, establishing  $g(\cdot)$ is a Young function. Then Subsequently, we utilize Lemma \ref{gudu}, Corollary \ref{dudw1c}, Lemma \ref{dudw1}, Lemma \ref{dw1w2}, and Lemma \ref{dw2w3} for the estimation
\begin{eqnarray*}
\frac{\mathbf{I}_{2}}{r_i}&=&\left( \fint_{B_i}\left[ \frac{g(|Dw_0^{i}-Dw_3^{i-1}|)}{|Dw_0^{i}-Dw_3^{i-1}|}\frac{|u-w_0^{i}|}{r_i}\right] ^{\xi}dx\right) ^{\frac{1}{\xi}} \\
&\leqslant& c\left( \fint_{B_i}g^{*}\left(  \frac{g(|Dw_0^{i}-Dw_3^{i-1}|)}{|Dw_0^{i}-Dw_3^{i-1}|}\right)^{\xi} dx \right) ^{\frac{1}{\xi}}+c\left( \fint_{B_i}g\left(\frac{|u-w_0^{i}|}{r_i} \right)^{\xi}dx \right) ^{\frac{1}{\xi}} \\
&\leqslant& c\left( \fint_{B_i}g(|Dw_0^{i}-Dw_3^{i-1}|)^{\xi}dx\right) ^{\frac{1}{\xi}} +c\fint_{B_i}g(|Du-Dw_0^{i}|)dx \\
&\leqslant&c\fint_{B_i}g(|Du-Dw_0^{i}|)dx+c\left( \fint_{B_i}g(|Du-Dw_0^{i}|)^{\xi}dx\right) ^{\frac{1}{\xi}}\\
&+&c\left( \fint_{B_i}g(|Du-Dw_0^{i-1}|)^{\xi}dx\right) ^{\frac{1}{\xi}}+c\left( \fint_{B_i}g(|Dw_0^{i-1}-Dw_1^{i-1}|)^{\xi}dx\right) ^{\frac{1}{\xi}}\\
&+&c\left( \fint_{B_i}g(|Dw_1^{i-1}-Dw_2^{i-1}|)^{\xi}dx\right) ^{\frac{1}{\xi}} +c\left( \fint_{B_i}g(|Dw_2^{i-1}-Dw_3^{i-1}|)^{\xi}dx\right) ^{\frac{1}{\xi}}  \\
&\leqslant& c\delta^{-n} \left[ \frac{|\mu|(\overline{B_{i-1}})}{r_{i-1}^{n-1}}+\frac{D\Psi_1(B_{i-1})}{r_{i-1}^{n-1}}+\frac{D\Psi_2(B_{i-1})}{r_{i-1}^{n-1}}\right] .
\end{eqnarray*}
In conclusion, merging all estimates gives
\begin{eqnarray*}
&&\fint_{B_i}|Du-Dw_0^{i}|dx \\
&\leqslant& \varepsilon \fint_{B_i}|Du-Dw_0^{i}|dx+c\delta^{-n}\left[ \frac{|\mu|(\overline{B_{i-1}})}{r_{i-1}^{n-1}}+\frac{D\Psi_1(B_{i-1})}{r_{i-1}^{n-1}}+\frac{D\Psi_2(B_{i-1})}{r_{i-1}^{n-1}}\right] \frac{\lambda}{g(\lambda)}+\frac{\varepsilon\sigma}{r_i}.
\end{eqnarray*}
Now let $\sigma\rightarrow0$ and $\varepsilon=\frac{1}{2}$, we have
\begin{equation*}
\fint_{B_i}|Du-Dw_0^{i}|dx\leqslant c\frac{\delta^{-n}\lambda}{g(\lambda)}\left[ \frac{|\mu|(\overline{B_{i-1}})}{r_{i-1}^{n-1}}+\frac{D\Psi_1(B_{i-1})}{r_{i-1}^{n-1}}+\frac{D\Psi_2(B_{i-1})}{r_{i-1}^{n-1}}\right],
\end{equation*}
 which finishes our proof.
\end{proof}

\begin{lemma}\label{glam-}
  Under the same assumptions of Lemma \ref{glam-0}, then we have
\begin{equation*}
 \fint_{B_i}|Dw_0^{i}-Dw_1^{i}|dx\leqslant c \frac{\delta^{-n}\lambda}{g(\lambda)}\left[ \frac{|\mu|(\overline{B_{i-1})}}{r_{i-1}^{n-1}}+ \frac{D\Psi_1(B_{i-1})}{r_{i-1}^{n-1}}+ \frac{D\Psi_2(B_{i-1})}{r_{i-1}^{n-1}}\right],
\end{equation*}
where $c=c(data,H,\delta)$.
\end{lemma}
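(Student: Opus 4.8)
The plan is to repeat the proof of Lemma~\ref{glam-0} almost verbatim, with $u$ and $w_0^i$ replaced throughout by $w_0^i$ and $w_1^i$: the pair $(w_0^i,w_1^i)$ plays for the homogeneous obstacle problem precisely the role that $(u,w_0^i)$ plays for the measure-data obstacle problem, the only structural difference being that the single ``source'' separating $w_0^i$ from $w_1^i$ is $-\operatorname{div}(a(x,D\psi_2))$. Accordingly, the weighted energy bound of Lemma~\ref{bjuw1} will take the place of Corollary~\ref{coro1}, and the comparison bounds of Lemma~\ref{dudw1} will take the place of those of Corollary~\ref{dudw1c} on the $i$-th scale. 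I would fix $\chi,\xi$ exactly as in Lemma~\ref{glam-0}, use the two-sided bound $\lambda/H\leqslant|Dw_3^{i-1}|\leqslant H\lambda$ on $B_i$ from \eqref{tjgd} together with the doubling of $h_\chi$ to write
\begin{equation*}
\fint_{B_i}|Dw_0^i-Dw_1^i|\,dx\leqslant c\fint_{B_i}\frac{h_\chi(|Dw_3^{i-1}|)}{h_\chi(\lambda)}|Dw_0^i-Dw_1^i|\,dx\leqslant Q_1+Q_2,
\end{equation*}
where $Q_1$ and $Q_2$ carry the weights $h_\chi(|Dw_1^i-Dw_3^{i-1}|)/h_\chi(\lambda)$ and $h_\chi(|Dw_1^i|)/h_\chi(\lambda)$ respectively.

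For $Q_1$, Young's inequality with the conjugate pair $g_\chi,g_\chi^{*}$ and \eqref{a(x)4} reduce $h_\chi(\lambda)Q_1$ to $c\fint_{B_i}g_\chi(|Dw_1^i-Dw_3^{i-1}|)\,dx+c\fint_{B_i}g_\chi(|Dw_0^i-Dw_1^i|)\,dx$. I would then dominate $|Dw_1^i-Dw_3^{i-1}|$ by the chain of six differences $|Dw_1^i-Dw_0^i|$, $|Dw_0^i-Du|$, $|Du-Dw_0^{i-1}|$, $|Dw_0^{i-1}-Dw_1^{i-1}|$, $|Dw_1^{i-1}-Dw_2^{i-1}|$, $|Dw_2^{i-1}-Dw_3^{i-1}|$, apply on $B_{i-1}$ (at the cost of $\delta^{-n}$) Lemma~\ref{dudw1}, Corollary~\ref{dudw1c}, Lemma~\ref{dw1w2} and Lemma~\ref{dw2w3}, and use Lemma~\ref{dudw1} on $B_i$ for the last average, obtaining $h_\chi(\lambda)Q_1\leqslant c\,\delta^{-n}[g_\chi(A_{i-1}^1)+g_\chi(A_{i-1}^2)+g_\chi(A_{i-1}^3)]$. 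Dividing by $h_\chi(\lambda)$, writing $g_\chi(t)=[g(t)/t]^\chi g(t)$ and using the monotonicity of $t\mapsto g(t)/t$ together with $A_{i-1}^j\leqslant\lambda$ then yields $Q_1\leqslant c\,\delta^{-n}\frac{\lambda}{g(\lambda)}\bigl[\frac{|\mu|(\overline{B_{i-1}})}{r_{i-1}^{n-1}}+\frac{D\Psi_1(B_{i-1})}{r_{i-1}^{n-1}}+\frac{D\Psi_2(B_{i-1})}{r_{i-1}^{n-1}}\bigr]$.

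For $Q_2$ I would mimic \eqref{hpL0}: split $h_\chi(|Dw_1^i|)=\left[\frac{g(|Dw_1^i|)}{|Dw_1^i|}\right]^{\frac{1+2\chi}{2}}\left[\frac{g(|Dw_1^i|)}{|Dw_1^i|}\right]^{\frac12}$, use \eqref{vgz1} to bound $\left[\frac{g(|Dw_1^i|)}{|Dw_1^i|}\right]^{\frac12}|Dw_0^i-Dw_1^i|$ by $c\,|V_g(Dw_0^i)-V_g(Dw_1^i)|$, insert and remove the weight $(\alpha+|w_0^i-w_1^i|)^{\xi/2}$, apply Cauchy--Schwarz and Lemma~\ref{bjuw1}, and choose $\alpha=\bigl(\fint_{B_i}\frac{h_{2\chi}(|Dw_1^i|)}{h_{2\chi}(\lambda)}|w_0^i-w_1^i|^{\xi}\,dx\bigr)^{1/\xi}+\sigma$. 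Combining the bound $\fint_{B_i}h_{2\chi}(|Dw_1^i|)\,dx\leqslant c\,\delta^{-n}h_{2\chi}(\lambda)$ (the same telescoping, with $|Dw_3^{i-1}|\leqslant H\lambda$ and $A_{i-1}^j\leqslant\lambda$) with Young's inequality gives $Q_2\leqslant\varepsilon\,\alpha/r_i+c(\varepsilon)\,\delta^{-n}\frac{\lambda}{g(\lambda)}\frac{D\Psi_2(B_i)}{r_i^n}$. Since $w_1^i\in w_0^i+W_0^{1,G}(B_i)$ and $w_0^i\in W^{1,G}(B_i)$, so that $w_0^i-w_1^i\in W_0^{1,1}(B_i)$ by \eqref{lg}, I would estimate $\alpha/r_i$ by the Sobolev inequality and Lemma~\ref{gudu} (after one more telescoping of $|Dw_1^i|$ and the comparison lemmas), so that the resulting $\varepsilon\fint_{B_i}|Dw_0^i-Dw_1^i|\,dx$ is reabsorbed on the left; letting $\sigma\to0$ and $\varepsilon=\tfrac12$ closes the argument.

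The main obstacle is, as in Lemma~\ref{glam-0}, the $Q_2$ step: one must carry the auxiliary parameter $\alpha$ through the two Cauchy--Schwarz/Young applications, control the weighted average $\fint_{B_i}h_{2\chi}(|Dw_1^i|)(\alpha+|w_0^i-w_1^i|)^{\xi}\,dx$, and convert $\alpha/r_i$ into an absorbable quantity, which requires treating $g$ as equivalent to the convex $N$-function $f(t)=\int_0^t g(s)/s\,ds$; the telescoping bookkeeping and the passage from the $A_{i-1}^j$-bounds to the density terms are routine once Lemma~\ref{ag} and the monotonicity of $g(t)/t$ are invoked. I note, finally, that a much shorter route is in fact available: Lemma~\ref{dudw1} already gives $\fint_{B_i}g_\chi(|Dw_0^i-Dw_1^i|)\,dx\leqslant c\,g_\chi(A_3^i)$ with $A_3^i=g^{-1}\bigl(D\Psi_2(B_i)/r_i^{n-1}\bigr)$, and Jensen applied to the convex function equivalent to $g_\chi$, combined with the elementary bound $A_3^i\leqslant c\,\delta^{-n}A_{i-1}^3\leqslant c\,\delta^{-n}\frac{\lambda}{g(\lambda)}\frac{D\Psi_2(B_{i-1})}{r_{i-1}^{n-1}}$ (from $B_i\subset B_{i-1}$, Lemma~\ref{ag}, and the monotonicity of $g(t)/t$ with $A_{i-1}^3\leqslant\lambda$), already yields the claim, $D\Psi_2(B_{i-1})/r_{i-1}^{n-1}$ being one of the three right-hand terms.
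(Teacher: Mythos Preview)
Your main proposal is correct and follows the paper's proof essentially line for line: the same $Q_1/Q_2$ split with weights $h_\chi(|Dw_1^i-Dw_3^{i-1}|)$ and $h_\chi(|Dw_1^i|)$, the same telescoping chain (the paper in fact drops the link $|Du-Dw_0^i|$ you include, a harmless omission), the same use of Lemma~\ref{bjuw1} in place of Corollary~\ref{coro1}, the same choice of $\alpha$, and the same absorption via Sobolev and Lemma~\ref{gudu}.

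However, the ``shorter route'' you sketch at the end does not work. From Jensen and Lemma~\ref{dudw1} you would indeed get $\fint_{B_i}|Dw_0^i-Dw_1^i|\,dx\leqslant c(\delta)A_{i-1}^3$, but the last step $A_{i-1}^3\leqslant c\,\frac{\lambda}{g(\lambda)}\,g(A_{i-1}^3)$ is equivalent to $\frac{g(\lambda)}{\lambda}\leqslant c\,\frac{g(A_{i-1}^3)}{A_{i-1}^3}$, and since $A_{i-1}^3\leqslant\lambda$ and $t\mapsto g(t)/t$ is \emph{increasing} (because $i_g\geqslant1$), the inequality points the wrong way. The entire purpose of the $Q_1/Q_2$ machinery, with its weighted energy and the two-sided bound on $|Dw_3^{i-1}|$, is precisely to manufacture the linear factor $\lambda/g(\lambda)$ that a direct Jensen argument cannot deliver; so the longer argument is not optional here.
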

\begin{proof}
Since the proof is similar to that of Lemma \ref{glam-0}, we will only highlight the main points. Let $\chi, \xi $ are as in  Lemma \ref{glam-0}.
Then from \eqref{tjgd} we know
\begin{eqnarray*}
&&\fint_{B_i}|Dw_0^{i}-Dw_1^{i}|dx \\
&\leqslant& c \fint_{B_{i}} \frac{h_{\chi}(|Dw_1^{i}-Dw_3^{i-1}|)}{h_{\chi}(\lambda)}|Dw_0^{i}-Dw_1^{i}|dx +c \fint_{B_{i}} \frac{h_{\chi}(|Dw_1^{i}|)}{h_{\chi}(\lambda)}|Dw_0^{i}-Dw_1^{i}|dx \\
&:=&Q_1+Q_2.
\end{eqnarray*}
As for the estimate of $Q_1$. We apply \eqref{a(x)4}, Corollary \ref{dudw1c}, Lemma \ref{dudw1}, Lemma \ref{dw1w2}, Lemma \ref{dw2w3} and Young's inequality to get
\begin{eqnarray*}
h_{\chi}(\lambda)Q_1 &\leqslant& c\fint_{B_i}g_{\chi}^{*}\left( \frac{g_{\chi}(|Dw_1^{i}-Dw_3^{i-1}|)}{|Dw_1^{i}-Dw_3^{i-1}|}\right)dx+c\fint_{B_i}g_{\chi}(|Dw_0^{i}-Dw_1^{i}|)dx \\
&\leqslant& c\fint_{B_i}g_{\chi}(|Dw_0^{i}-Dw_1^{i}|)+g_{\chi}(|Du-Dw_0^{i-1}|)+g_{\chi}(|Dw_0^{i-1}-Dw_1^{i-1}|)\\
&+&g_{\chi}(|Dw_1^{i-1}-Dw_2^{i-1}|)+g_{\chi}(|Dw_2^{i-1}-Dw_3^{i-1}|)dx \\
&\leqslant& c \delta^{-n}\left[ g_{\chi}(A_{i-1}^1)+g_{\chi}(A_{i-1}^2)+g_{\chi}(A_{i-1}^3)\right] .
\end{eqnarray*}
Then by \eqref{tjgd}, we have
\begin{eqnarray*}
Q_{1} 
&\leqslant& c\frac{\delta^{-n}\lambda}{g(\lambda)} \left[\frac{|\mu|(\overline{B_{i-1})}}{r_{i-1}^{n-1}}+\frac{D\Psi_1(B_{i-1})}{r_{i-1}^{n-1}} +\frac{D\Psi_2(B_{i-1})}{r_{i-1}^{n-1}} \right].
\end{eqnarray*}
Next,  we estimate $Q_2$. Employing \eqref{vgz1} and Lemma \ref{bjuw1} to get
\begin{eqnarray}\nonumber
h_{\chi}(\lambda)Q_2 
&\leqslant& c \fint_{B_i} \left[\frac{g(|Dw_1^{i}|)}{|Dw_1^{i}|} \right] ^{\frac{1+2\chi}{2}}|V_{g}(Dw_0^{i})-V_{g}(Dw_1^{i})|dx \\\nonumber
&\leqslant& c\left[\fint_{B_i}\frac{|V_{g}(Dw_0^{i})-V_{g}(Dw_1^{i})|^{2}}{(\alpha+|w_0^{i}-w_1^{i}|)^{\xi}} dx\right]^{\frac{1}{2}}\left[\fint_{B_i}h_{2\chi}(|Dw_1^{i})(\alpha+|w_0^{i}-w_1^{i}|)^{\xi}dx \right] ^{\frac{1}{2}} \\
&\leqslant& c\left[ \alpha^{1-\xi}\frac{D\Psi_2(B_i)}{r_{i}^{n}}\right] ^{\frac{1}{2}}\left[\fint_{B_i}h_{2\chi}(|Dw_1^{i}|)(\alpha+|w_0^{i}-w_1^{i}|)^{\xi}dx \right] ^{\frac{1}{2}}, \label{hpL}
\end{eqnarray}
where $\alpha>0$ to be determined.
Using Corollary \ref{dudw1c}, Lemma \ref{dudw1}, Lemma \ref{dw1w2}, Lemma \ref{dw2w3} again, we have
\begin{eqnarray}\nonumber
&&\fint_{B_i}h_{2\chi}(|Dw_1^{i}|)dx \\\nonumber
&\leqslant & \fint_{B_i}h_{2\chi}(|Du-Dw_0^{i}|)+h_{2\chi}(|Dw_0^{i}-Dw_1^{i}|)+h_{2\chi}(|Du-Dw_0^{i-1}|)+h_{2\chi}(|Dw_0^{i-1}-Dw_1^{i-1}|) \\\nonumber
&+&h_{2\chi}(|Dw_1^{i-1}-Dw_2^{i-1}|)+h_{2\chi}(|Dw_2^{i-1}-Dw_3^{i-1}|)+h_{2\chi}(|Dw_3^{i-1}|)dx  \\\nonumber
&\leqslant &ch_{2\chi}(\lambda)+c\delta^{-n}\left[ h_{2\chi}(A_{i-1}^1)+h_{2\chi}(A_{i-1}^2)+h_{2\chi}(A_{i-1}^3)\right]   \\
&\leqslant &c\delta^{-n}h_{2\chi}(\lambda).\label{h2p}
\end{eqnarray}
Now we come back to \eqref{hpL}
\begin{equation*}
Q_2\leqslant c\sqrt{\frac{\lambda}{g(\lambda)}}\left[\alpha^{1-\xi}\frac{D\Psi_2(B_i)}{r_{i}^{n}} \right] ^{\frac{1}{2}}\left( \fint_{B_i}\frac{h_{2\chi}(|Dw_1^{i}|)}{h_{2\chi}(\lambda)}(\alpha+|w_0^i-w_1^{i}|)^{\xi}dx\right) ^{\frac{1}{2}}.
\end{equation*}
We take
\begin{equation*}
\alpha=\left(\fint_{B_i}\frac{h_{2\chi}(|Dw_1^{i}|)}{h_{2\chi}(\lambda)}|w_0^i-w_1^{i}|^{\xi}dx \right) ^{\frac{1}{\xi}}+\sigma   \ \ \ \ \ \ \ \ for \ some \ \sigma>0.
\end{equation*}
By combining \eqref{h2p} with Young's inequality gives
\begin{eqnarray*}
Q_2 
&\leqslant& \varepsilon \frac{\alpha}{r_i}+c(\varepsilon)\frac{D\Psi_2(B_i)}{r_{i}^{n-1}}\frac{\delta^{-n}\lambda}{g(\lambda)}.
\end{eqnarray*}
Finally, we combine with the estimate of $Q_1$ to get
\begin{equation*}
\fint_{B_i}|Dw_0^{i}-Dw_1^{i}|dx \leqslant \varepsilon \frac{\alpha}{r_i}+c\frac{\delta^{-n}\lambda}{g(\lambda)} \left[\frac{|\mu|(\overline{B_{i-1}})}{r_{i-1}^{n-1}}+\frac{D\Psi_1(B_{i-1})}{r_{i-1}^{n-1}} +\frac{D\Psi_2(B_{i-1})}{r_{i-1}^{n-1}} \right].
\end{equation*}
On the other hand, we estimate
\begin{eqnarray*}
\frac{g(\lambda)}{\lambda}\alpha 
&\leqslant & \left( \fint_{B_i}h_{2\chi}(|Dw_3^{i-1}|)|w_0^{i}-w_1^{i}|^{\xi}dx\right) ^{\frac{1}{\xi}} \\
&+&\left( \fint_{B_i}h_{2\chi}(|Dw_1^{i}-Dw_3^{i-1}|)|w_0^{i}-w_1^{i}|^{\xi}dx\right) ^{\frac{1}{\xi}}+\frac{g(\lambda)}{\lambda}\sigma \\
&\leqslant & \mathbf{I}_{1}+\mathbf{I}_{2}+\frac{ g(\lambda)}{\lambda}\sigma.
\end{eqnarray*}
For the estimate of $\mathbf{I}_{1}$, by   the Sobolev's inequality, we obtain
\begin{equation*}
\frac{\mathbf{I}_{1}\lambda}{r_{i}g(\lambda)}\leqslant c\left(\fint_{B_i}\left|\frac{w_0^{i}-w_1^{i}}{r_i}\right|^{\xi}dx \right) ^{\frac{1}{\xi}}\leqslant c\fint_{B_i}|Dw_0^{i}-Dw_1^{i}|dx.
\end{equation*}
As for the estimate of $\mathbf{I}_{2}$, we make use of Lemma \ref{gudu}, Corollary \ref{dudw1c}, Lemma \ref{dw1w2} and Lemma \ref{dw2w3} to estimate
\begin{eqnarray*}
\frac{\mathbf{I}_{2}}{r_i}
&\leqslant& c\left( \fint_{B_i}g^{*}\left(  \frac{g(|Dw_1^{i}-Dw_3^{i-1}|)}{|Dw_1^{i}-Dw_3^{i-1}|}\right)^{\xi} dx \right) ^{\frac{1}{\xi}}+c\left( \fint_{B_i}g\left(\frac{|w_0^{i}-w_1^{i}|}{r_i} \right)^{\xi}dx \right) ^{\frac{1}{\xi}} \\
&\leqslant&c\fint_{B_i}g(|Dw_0^{i}-Dw_1^{i}|)dx+c\left( \fint_{B_i}g(|Du-Dw_0^{i}|)^{\xi}dx\right) ^{\frac{1}{\xi}}+c\left( \fint_{B_i}g(|Dw_0^{i}-Dw_1^{i}|)^{\xi}dx\right) ^{\frac{1}{\xi}}\\
&+&c\left( \fint_{B_i}g(|Du-Dw_0^{i-1}|)^{\xi}dx\right) ^{\frac{1}{\xi}}+c\left( \fint_{B_i}g(|Dw_0^{i-1}-Dw_1^{i-1}|)^{\xi}dx\right) ^{\frac{1}{\xi}} \\
&+&c\left( \fint_{B_i}g(|Dw_1^{i-1}-Dw_2^{i-1}|)^{\xi}dx\right) ^{\frac{1}{\xi}}+c\left( \fint_{B_i}g(|Dw_2^{i-1}-Dw_3^{i-1}|)^{\xi}dx\right) ^{\frac{1}{\xi}}  \\
&\leqslant& c\delta^{-n} \left[ \frac{|\mu|(\overline{B_{i-1}})}{r_{i-1}^{n-1}}+\frac{D\Psi_1(B_{i-1})}{r_{i-1}^{n-1}}+\frac{D\Psi_2(B_{i-1})}{r_{i-1}^{n-1}}\right] .
\end{eqnarray*}
Finally, combining with all estimates to get
\begin{eqnarray*}
&&\fint_{B_i}|Dw_0^{i}-Dw_1^{i}|dx \\
&\leqslant &\varepsilon \fint_{B_i}|Dw_0^{i}-Dw_1^{i}|dx+c\delta^{-n}\left[ \frac{|\mu|(\overline{B_{i-1}})}{r_{i-1}^{n-1}}+\frac{D\Psi_1(B_{i-1})}{r_{i-1}^{n-1}}+\frac{D\Psi_2(B_{i-1})}{r_{i-1}^{n-1}}\right] \frac{\lambda}{g(\lambda)}+\frac{\varepsilon\sigma}{r_i}.
\end{eqnarray*}
Now let $\sigma\rightarrow0$ and $\varepsilon=\frac{1}{2}$, we obtain
\begin{equation*}
\fint_{B_i}|Dw_0^{i}-Dw_1^{i}|dx\leqslant c\frac{\delta^{-n}\lambda}{g(\lambda)}\left[ \frac{|\mu|(\overline{B_{i-1}})}{r_{i-1}^{n-1}}+\frac{D\Psi_1(B_{i-1})}{r_{i-1}^{n-1}}+\frac{D\Psi_2(B_{i-1})}{r_{i-1}^{n-1}}\right],
\end{equation*}
 which finishes our proof.
\end{proof}
The proof strategy of the following lemma is similar to Lemma \ref{glam-0} and Lemma \ref{glam-}, with the key distinction being the utilization of Lemma \ref{bjw1w2} in the proof process.
\begin{lemma}\label{glam-2}
 Under the same assumptions of Lemma \ref{glam-0}, then we  have
\begin{equation*}
 \fint_{B_i}|Dw_1^{i}-Dw_2^{i}|+|Dw_2^{i}-Dw_3^{i}|dx\leqslant c \frac{\delta^{-n}\lambda}{g(\lambda)}\left[ \frac{|\mu|(\overline{B_{i-1})}}{r_{i-1}^{n-1}}+ \frac{D\Psi_1(B_{i-1})}{r_{i-1}^{n-1}}+ \frac{D\Psi_2(B_{i-1})}{r_{i-1}^{n-1}}\right],
\end{equation*}
where $c=c(data,H,\delta)$.
\end{lemma}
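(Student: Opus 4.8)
The plan is to follow the template of Lemma \ref{glam-0} and Lemma \ref{glam-} almost verbatim, the only genuinely new ingredient being that the weighted energy estimates for the two pairs $(w_1^i,w_2^i)$ and $(w_2^i,w_3^i)$ are now supplied by Lemma \ref{bjw1w2}, whose right-hand side carries $D\Psi_1$. First I would fix $\chi$ and $\xi=1+2\chi$ exactly as in the opening lines of the proof of Lemma \ref{glam-0}, so that they are admissible in Lemma \ref{dudw1-} and $\xi<n/(n-1)$, and I would estimate the two averages $\fint_{B_i}|Dw_1^i-Dw_2^i|\,dx$ and $\fint_{B_i}|Dw_2^i-Dw_3^i|\,dx$ separately, since the arguments are symmetric; below I describe the bound for the second one. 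Using the lower bound $|Dw_3^{i-1}|\geqslant\lambda/H$ on $B_i$ from \eqref{tjgd} together with the $\triangle_2$-additivity of $h_\chi$, I would insert the weight $h_\chi(|Dw_3^{i-1}|)/h_\chi(\lambda)\approx1$ and split
\[
\fint_{B_i}|Dw_2^i-Dw_3^i|\,dx\ \lesssim\ \fint_{B_i}\frac{h_\chi(|Dw_3^i-Dw_3^{i-1}|)}{h_\chi(\lambda)}|Dw_2^i-Dw_3^i|\,dx+\fint_{B_i}\frac{h_\chi(|Dw_3^i|)}{h_\chi(\lambda)}|Dw_2^i-Dw_3^i|\,dx =: Q_1+Q_2 .
\]

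For $Q_1$ I would run the same computation as for the $Q_1$ term in Lemma \ref{glam-0}: Young's inequality with the conjugate pair $g_\chi,g_\chi^{*}$, then \eqref{a(x)4} to pass from $g_\chi^{*}(g_\chi(t)/t)$ to $g_\chi(t)$, then the $\triangle_2$-additivity of $g_\chi$ to unfold $g_\chi(|Dw_3^i-Dw_3^{i-1}|)$ along the two comparison chains $u\to w_0^i\to w_1^i\to w_2^i\to w_3^i$ on $B_i$ and $u\to w_0^{i-1}\to\cdots\to w_3^{i-1}$ on $B_{i-1}\supseteq B_i$, and finally Corollary \ref{dudw1c}, Lemma \ref{dudw1}, Lemma \ref{dw1w2} and Lemma \ref{dw2w3}. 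Invoking \eqref{tjgd} and the monotonicity of $t\mapsto g(t)/t$ this should give $h_\chi(\lambda)Q_1\lesssim\delta^{-n}[g_\chi(A_{i-1}^1)+g_\chi(A_{i-1}^2)+g_\chi(A_{i-1}^3)]$ and hence $Q_1\lesssim\frac{\delta^{-n}\lambda}{g(\lambda)}\bigl[\tfrac{|\mu|(\overline{B_{i-1}})}{r_{i-1}^{n-1}}+\tfrac{D\Psi_1(B_{i-1})}{r_{i-1}^{n-1}}+\tfrac{D\Psi_2(B_{i-1})}{r_{i-1}^{n-1}}\bigr]$.

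The term $Q_2$ is where Lemma \ref{bjw1w2} enters, and it is the main obstacle. Writing $h_\chi(t)=[g(t)/t]^{(1+2\chi)/2}\,[g(t)/t]^{1/2}$ and using \eqref{vgz1}, I would bound $h_\chi(|Dw_3^i|)\,|Dw_2^i-Dw_3^i|\lesssim[g(|Dw_3^i|)/|Dw_3^i|]^{(1+2\chi)/2}\,|V_g(Dw_2^i)-V_g(Dw_3^i)|$, then split off the factor $(\alpha+|w_2^i-w_3^i|)^{\mp\xi/2}$ and apply Cauchy--Schwarz to reach
\[
h_\chi(\lambda)Q_2\ \lesssim\ \Bigl[\fint_{B_i}\frac{|V_g(Dw_2^i)-V_g(Dw_3^i)|^2}{(\alpha+|w_2^i-w_3^i|)^{\xi}}\,dx\Bigr]^{1/2}\Bigl[\fint_{B_i}h_{2\chi}(|Dw_3^i|)(\alpha+|w_2^i-w_3^i|)^{\xi}\,dx\Bigr]^{1/2}.
\]
Here the first factor is controlled by the second inequality of Lemma \ref{bjw1w2}, giving $[\alpha^{1-\xi}D\Psi_1(B_i)/r_i^{n}]^{1/2}$, while $\fint_{B_i}h_{2\chi}(|Dw_3^i|)\,dx\lesssim\delta^{-n}h_{2\chi}(\lambda)$ follows from the same chain of comparison estimates that gave \eqref{h2p}. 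Choosing $\alpha=\bigl(\fint_{B_i}\tfrac{h_{2\chi}(|Dw_3^i|)}{h_{2\chi}(\lambda)}|w_2^i-w_3^i|^{\xi}\,dx\bigr)^{1/\xi}+\sigma$ and applying Young's inequality, I expect $Q_2\leqslant\varepsilon\tfrac{\alpha}{r_i}+c(\varepsilon)\tfrac{D\Psi_1(B_i)}{r_i^{n-1}}\tfrac{\delta^{-n}\lambda}{g(\lambda)}$.

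To close the scheme I would estimate $\tfrac{g(\lambda)}{\lambda}\alpha\leqslant\mathbf{I}_1+\mathbf{I}_2+\tfrac{g(\lambda)}{\lambda}\sigma$, where $\mathbf{I}_1$ keeps the frozen weight $h_{2\chi}(|Dw_3^{i-1}|)$ and is treated by Sobolev's inequality to produce $\tfrac{\mathbf{I}_1\lambda}{r_ig(\lambda)}\lesssim\fint_{B_i}|Dw_2^i-Dw_3^i|\,dx$, which is reabsorbed on the left, and $\mathbf{I}_2$ is handled by Lemma \ref{gudu} together with \eqref{a(x)4} and the $[g(\cdot)]^{\xi}$-versions of Corollary \ref{dudw1c}, Lemma \ref{dudw1}, Lemma \ref{dw1w2} and Lemma \ref{dw2w3}, yielding $\mathbf{I}_2/r_i\lesssim\delta^{-n}\bigl[\tfrac{|\mu|(\overline{B_{i-1}})}{r_{i-1}^{n-1}}+\tfrac{D\Psi_1(B_{i-1})}{r_{i-1}^{n-1}}+\tfrac{D\Psi_2(B_{i-1})}{r_{i-1}^{n-1}}\bigr]$. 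Collecting all contributions, taking $\varepsilon=\tfrac12$ to absorb the left-hand occurrences and letting $\sigma\to0$ yields the desired bound for $\fint_{B_i}|Dw_2^i-Dw_3^i|\,dx$; the identical argument with the first inequality of Lemma \ref{bjw1w2} and the weight $h_\chi(|Dw_2^i|)$ gives the bound for $\fint_{B_i}|Dw_1^i-Dw_2^i|\,dx$, and summing the two finishes the proof. I expect the hard part to be, exactly as in Lemmas \ref{glam-0}--\ref{glam-}, the self-improving absorption in $Q_2$: one must pick $\alpha$ so that the weighted energy estimate of Lemma \ref{bjw1w2} and the Sobolev inequality can be played against each other without loss, and one must check that every intermediate comparison map has gradient controlled by $\lambda$ in all the relevant norms, which is precisely what \eqref{tjgd} together with Corollary \ref{dudw1c} and Lemmas \ref{dudw1}, \ref{dw1w2}, \ref{dw2w3} guarantee.
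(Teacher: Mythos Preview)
Your proposal is correct and follows exactly the strategy the paper indicates: the paper's own proof of this lemma is a one-line remark that the argument is the same as in Lemmas \ref{glam-0} and \ref{glam-}, with Lemma \ref{bjw1w2} replacing the earlier weighted energy estimates. Your write-up is a faithful and detailed expansion of precisely this scheme, including the correct choice of weight function, the $Q_1/Q_2$ split, the absorption via the specific choice of $\alpha$, and the use of the comparison chains on $B_i$ and $B_{i-1}$.
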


\section{The proof of main theorem}\label{section4}
This section is dedicated to establishing the proofs of several main theorems.

\begin{proof}[Proof of Theorem \ref{th1}]
We define the quantity
\begin{equation*}
\lambda:=g^{-1}\left[ H_1 g\left( \fint_{B_{R}}|Du|dx\right)+H_{2}\mathbf{\RNum{1}}^{|\mu|}_{1}(x_0,2R)+H_{3}\mathbf{\RNum{1}}^{[\psi_1]}_{1}(x_0,2R) +H_{4}\mathbf{\RNum{1}}^{[\psi_2]}_{1}(x_0,2R) \right]
\end{equation*}
where the constants $H_1,H_2,H_3,H_4$ will be determined subsequently.  It's  our aim to establish that
\begin{equation}\label{dula}
|Du(x_0)|\leqslant \lambda.
\end{equation}
Without loss of generality we may assume  $\lambda>0$, otherwise \eqref{dula}  trivially follows from the monotonicity of the vector field.
We then  define
\begin{equation*}
C_{i}:=\sum_{j=i-2}^{i}\fint_{B_j}|Du|dx+\delta^{-n}E(Du,B_i), \ \ i\geqslant2, \ i\in \mathbb{N}.
\end{equation*}
Making use of  Lemma \ref{ag} to obtain
\begin{equation*}
C_2+C_3\leqslant 10\left(\frac{R}{r\delta^3} \right) ^{n}\delta^{-n}\fint_{B_R}|Du|dx \leqslant 10\delta^{-4n}H_{1}^{-\frac{1}{s_g}}\lambda\left(\frac{R}{r} \right) ^n.
\end{equation*}
We choose $H_{1}=H_{1}(data,\delta,r)$ large  enough to derive
\begin{equation*}
10\delta^{-4n}H_{1}^{-\frac{1}{s_g}}\left(\frac{R}{r} \right) ^n\leqslant\frac{1}{10},
\end{equation*}
then it follows
$$C_2+C_3\leqslant \frac{\lambda}{10}.$$
Without of generality, we can assume  there exists an exit time index $i_e\geqslant3$ such that
\begin{equation}\label{ciee2}
  C_{i_e}\leqslant \frac{\lambda}{10} \ \ \ \ but \ \ \ \ C_i>\frac{\lambda}{10}, \ \ for \ \ i>i_e.
\end{equation}
Otherwise, we would have $C_{i_j}\leqslant \frac{\lambda}{10} $ for an increasing subsequence $\left\lbrace i_j\right\rbrace $, we obtain
 $$|Du(x_0)|\leqslant \lim_{j\rightarrow \infty}\fint_{B_{i_j}}|Du|dx \leqslant \frac{\lambda}{10}.$$
Subsequently, our goal is to establish through induction that
\begin{equation}\label{aei2}
\fint_{B_i}|Du|dx \leqslant \lambda, \ \ \ \\ \ \forall i\geqslant i_e.
\end{equation}
Suppose that \eqref{aei2} is valid for $j=i_e,i_{e}+1,...,i$. Because of
 \begin{equation*}
C_{i_e}:=\sum_{j=i_{e}-2}^{i_e}\fint_{B_j}|Du|dx+\delta^{-n}E(Du,B_{i_{e}})\leqslant \frac{\lambda}{10},
\end{equation*}
we have
 \begin{equation*}
\fint_{B_j}|Du|dx \leqslant \lambda, \ \ \ \\ \ for \ j=i_e-2,...,i.
\end{equation*}
Thus, by utilizing Corollary \ref{dudw1c}, Lemma \ref{dudw1}, Lemma \ref{dw1w2}, Lemma \ref{dw2w3}, and Lemma \ref{xsbj}, we derive
\begin{eqnarray}\nonumber
\sup_{B_{j+1}}|Dw_3^j|&\leqslant& \sup_{\frac{1}{2}B_{j}}|Dw_3^j| \\\nonumber
&\leqslant& \fint_{B_j}|Du-Dw_0^j|+|Dw_0^j-Dw_1^j|+|Dw_1^j-Dw_2^j|+|Dw_2^j-Dw_3^j|+|Du|dx \\ \label{supdv2}
&\leqslant& c_{2}\left[ g^{-1}\left( \frac{|\mu|(\overline{B_j})}{r_{j}^{n-1}}\right)+g^{-1}\left( \frac{D\Psi_1(B_j)}{r_{j}^{n-1}}\right)+g^{-1}\left( \frac{D\Psi_2(B_j)}{r_{j}^{n-1}}\right)\right] +\lambda.
\end{eqnarray}
We calculate
 \begin{equation*}
 \sum_{i=0}^{+\infty}\frac{|\mu|(B_i)}{r_{i}^{n-1}}\leqslant \frac{2^{n-1}}{log2}\int_{r}^{2r}\frac{|\mu|(B_{\rho})}{\rho^{n-1}}\frac{d\rho}{\rho}+ \sum_{i=0}^{+\infty}\frac{1}{\delta^{n-1}log\frac{1}{\delta}}\int_{r_{i+1}}^{r_i}\frac{|\mu|(B_{\rho})}{\rho^{n-1}}\frac{d\rho}{\rho},
 \end{equation*}
it follows
 \begin{equation*}
\sum_{i=0}^{+\infty}\frac{|\mu|(B_i)}{r_{i}^{n-1}}\leqslant c_{1}\int_{0}^{2r}\frac{|\mu|(B_{\rho})}{\rho^{n-1}}\frac{d\rho}{\rho}\leqslant c_{1}\mathbf{\RNum{1}}^{|\mu|}_{1}(x_0,2R).
 \end{equation*}
Likewise,
 \begin{equation*}
\sum_{i=0}^{+\infty}\frac{D\Psi_1(B_i)}{r_{i}^{n-1}}\leqslant c_{1}\int_{0}^{2r}\frac{D\Psi_1(B_{\rho})}{\rho^{n-1}}\frac{d\rho}{\rho}\leqslant c_{1}\mathbf{\RNum{1}}^{[\psi_1]}_{1}(x_0,2R),
 \end{equation*}
  \begin{equation*}
\sum_{i=0}^{+\infty}\frac{D\Psi_2(B_i)}{r_{i}^{n-1}}\leqslant c_{1}\int_{0}^{2r}\frac{D\Psi_2(B_{\rho})}{\rho^{n-1}}\frac{d\rho}{\rho}\leqslant c_{1}\mathbf{\RNum{1}}^{[\psi_2]}_{1}(x_0,2R).
 \end{equation*}
Subsequent to Lemma \ref{ag} and with the definition of
$\lambda$  in mind, we deduce
\begin{equation}\label{g-1}
g^{-1}\left( \frac{|\mu|(\overline{B_j})}{r_{j}^{n-1}}\right)\leqslant g^{-1}\left( \sum_{i=0}^{+\infty}\frac{|\mu|(B_i)}{r_{i}^{n-1}}\right)\leqslant g^{-1}(c_{1}\mathbf{\RNum{1}}^{|\mu|}_{1}(x_0,2R)) \leqslant c_{1}^{\frac{1}{i_g}}H_{2}^{-\frac{1}{s_g}}\lambda,
\end{equation}
\begin{equation}\label{g-12}
g^{-1}\left( \frac{D\Psi_1(B_j)}{r_{j}^{n-1}}\right) \leqslant c_{1}^{\frac{1}{i_g}}H_{3}^{-\frac{1}{s_g}}\lambda,
\end{equation}
\begin{equation}\label{g-13}
g^{-1}\left( \frac{D\Psi_2(B_j)}{r_{j}^{n-1}}\right) \leqslant c_{1}^{\frac{1}{i_g}}H_{4}^{-\frac{1}{s_g}}\lambda.
\end{equation}
Consider  $H_{2}=H_{2}(data)$, $H_{3}=H_{3}(data)$ and $H_{4}=H_{4}(data)$  chosen sufficiently large so as to obtain
\begin{equation*}
c_{2}c_{1}^{\frac{1}{i_g}}\left( H_{2}^{-\frac{1}{s_g}}+H_{3}^{-\frac{1}{s_g}} +H_{4}^{-\frac{1}{s_g}} \right) \leqslant1.
\end{equation*}
Making use of  the last inequality together with \eqref{supdv2},  \eqref{g-1}, \eqref{g-12} and \eqref{g-13},  we derive
\begin{equation}\label{supdv}
\sup_{B_{j+1}}|Dw_3^j|\leqslant \sup_{\frac{1}{2}B_{j}}|Dw_3^j| \leqslant2\lambda.
\end{equation}
Subsequently,by using \eqref{supdv} and Lemma \ref{xsbj}  to get
\begin{eqnarray*}
\fint_{\frac{1}{4}B_j}|Dw_{4}^{j}|dx&\leqslant& \fint_{\frac{1}{4}B_j}|Dw_{3}^{j}|dx+\fint_{\frac{1}{4}B_j}|Dw_{4}^{j}-Dw_{3}^{j}|dx \\
&\leqslant& 2\lambda+c_{3}\omega(r_j)^{\frac{1}{1+s_g}}\fint_{\frac{1}{2}B_j}|Dw_{3}^{j}|dx \\
&\leqslant& c_{4}\lambda.
\end{eqnarray*}
For $m\geqslant3, \ m\in \mathbb{N}$  to be specified subsequently, we utilize   Lemma \ref{zcth} in combination with the last equation to obtain
\begin{equation*}
osc_{B_{j+m}}|Dw_{4}^{j}|\leqslant c_5\delta^{m\beta}\fint_{\frac{1}{4}B_j}|Dw_{4}^{j}|dx \leqslant \delta^{m\beta}c_{5}\lambda.
\end{equation*}
Assume $m=m(\delta,\beta,data)$ is taken sufficiently large to ensure
$$\delta^{m\beta}c_{5}\leqslant \frac{\delta^{n}}{200}.$$
Consequently, we obtain
\begin{equation}\label{oscb}
osc_{B_{j+m}}|Dw_4^j|\leqslant \frac{\delta^{n}}{200}\lambda.
\end{equation}
On the other hand, we employ Corollary \ref{dudw1c}, Lemma \ref{dudw1}, Lemma \ref{dw1w2}, and Lemma \ref{dw2w3} to obtain
\begin{eqnarray*}
&&\fint_{B_{j+m}}|Du-Dw_3^j|dx\\
 &\leqslant & \fint_{B_{j+m}}|Du-Dw_0^j|+|Dw_0^j-Dw_1^j|+|Dw_1^j-Dw_2^j|+|Dw_2^j-Dw_3^j|dx \\
 &\leqslant & c_{2}\delta^{-mn}\left[ g^{-1}\left(\frac{|\mu|(B_j)}{r_{j}^{n-1}} \right)+g^{-1}\left(\frac{D\Psi_1(B_j)}{r_{j}^{n-1}} \right) +g^{-1}\left(\frac{D\Psi_2(B_j)}{r_{j}^{n-1}} \right) \right]   \\
 &\leqslant& c_{2}\delta^{-mn}c_{1}^{\frac{1}{i_g}}\left( H_{2}^{-\frac{1}{s_g}}+H_{3}^{-\frac{1}{s_g}}+H_{4}^{-\frac{1}{s_g}}\right) \lambda.
\end{eqnarray*}
Subsequently, we choose $H_{2}=H_{2}(m,\delta,data)$,  $H_{3}=H_{3}(m,\delta,data)$ and $H_{4}=H_{4}(m,\delta,data)$ sufficiently large to obtain
$$c_{2}\delta^{-mn}c_{1}^{\frac{1}{i_g}}\left( H_{2}^{-\frac{1}{s_g}}+H_{3}^{-\frac{1}{s_g}}+H_{4}^{-\frac{1}{s_g}}\right)\leqslant \frac{\delta^n}{200}.$$
Therefore, we derive
\begin{equation}\label{bjm}
\fint_{B_{j+m}}|Du-Dw_3^j|dx\leqslant \frac{\delta^n}{200}\lambda.
\end{equation}
Next, making use of  the triangle inequality to get
\begin{eqnarray*}
&&\delta^{-n}\fint_{B_{j+m}}|Du-(Du)_{B_{j+m}}|dx \\
&\leqslant&2 \delta^{-n}\fint_{B_{j+m}}|Dw_4^j-(Dw_4^j)_{B_{j+m}}|+|Du-Dw_3^j|+|Dw_3^j-Dw_4^j|dx\\
&\leqslant&2\delta^{-n} osc_{B_{j+m}}|Dw_4^j|+2\delta^{-n}\fint_{B_{j+m}}|Du-Dw_3^j|dx\\
&+&\left( \frac{1}{4}\right) ^{n}2\delta^{-n-mn}c_{3}\omega(r_j)^{\frac{1}{1+s_g}}\fint_{\frac{1}{2}B_j}|Dw_3^j|dx .
\end{eqnarray*}
We reduce the value of $r$ -in a way depending on $m,\delta,data$- to gain
$$\left( \frac{1}{4}\right) ^{n}2\delta^{-n-mn}c_{3}\omega(r_j)^{\frac{1}{1+s_g}}\leqslant \frac{1}{200}.$$
Ultimately, invoking \eqref{supdv} in conjunction with \eqref{oscb} and \eqref{bjm} yields
\begin{equation*}
\delta^{-n}\fint_{B_{j+m}}|Du-(Du)_{B_{j+m}}|dx \leqslant\frac{\lambda}{20}.
\end{equation*}
Thanks to $m\geqslant 3$ and $j\geqslant i_{e}-2$, we get
\begin{equation*}
C_{j+m}=\sum_{k=j+m-2}^{j+m}\fint_{B_{k}}|Du|dx+\delta^{-n}E(Du,B_{j+m})> \frac{\lambda}{10}.
\end{equation*}
Therefore,
\begin{equation*}
\sum_{k=j+m-2}^{j+m}\fint_{B_{k}}|Du|dx> \frac{\lambda}{20}.
\end{equation*}
By employing this inequality together with \eqref{bjm}, we obtain
\begin{eqnarray*}
3 \sup_{B_{j+1}}|Dw_3^j|&\geqslant& \sum_{k=j+m-2}^{j+m}\fint_{B_k}|Dw_3^j|dx \\
&\geqslant& \sum_{k=j+m-2}^{j+m}\fint_{B_k}|Du|-|Du-Dw_3^j|dx \\
&\geqslant& \frac{\lambda}{20}-\frac{3\lambda}{200} \geqslant \frac{\lambda}{40}.
\end{eqnarray*}
Thus,  there exists a point $x_1\in B_{j+1}$ such that $Dw_{3}^j(x_1)>\frac{\lambda}{200}.$ Furthermore,  leveraging \eqref{supdv}, we can employ Lemma \ref{xsbj} with $\sigma=\frac{1}{1000}$. We select  $\delta>0$ sufficiently small so that $B_{j+1}\subseteq \overline{\delta}B_{j}$, where $\overline{\delta}=\overline{\delta}(data,\omega(\cdot),\beta)$ as defined in Lemma  \ref{xsbj}. In conclusion, we establish
$$osc_{B_{j+1}}|Dw_3^j|\leqslant \frac{\lambda}{1000}.$$
 Thus,  for any $x\in B_{j+1}$, we derive
\begin{equation*}
|Dw_3^j(x)|\geqslant |Dw_3^j(x_1)|-|Dw_3^j(x_1)-Dw_3^j(x)|\geqslant \frac{\lambda}{200}-\frac{\lambda}{1000}\geqslant \frac{\lambda}{1000}.
\end{equation*}
By \eqref{supdv}, we have
\begin{equation*}
\frac{\lambda}{1000}\leqslant |Dw_3^j|\leqslant 2\lambda \ \ \ \ in \ \ B_{j+1} \ \ \ \ for \ j=i_{e}-2,...,i.
\end{equation*}
Then using Lemma \ref{glam-0}, Lemma \ref{glam-} and  Lemma \ref{glam-2}, there exists $c_{6}=c_{6}(data)$ such that
\begin{eqnarray}\label{lamg} \nonumber
&&\fint_{B_{j+1}}|Du-Dw_0^{j+1}|+|Dw_0^{j+1}-Dw_1^{j+1}|+|Dw_1^{j+1}-Dw_2^{j+1}|+|Dw_2^{j+1}-Dw_3^{j+1}|dx \\
&\leqslant& c_{6}\frac{ \delta^{-n} \lambda}{g(\lambda)}\left[\frac{|\mu|(\overline{B_{j}})}{r_{j}^{n-1}} +\frac{D\Psi_1(B_{j})}{r_{j}^{n-1}} +\frac{D\Psi_2(B_{j})}{r_{j}^{n-1}} \right]
\end{eqnarray}
for $j=i_{e}-2,...,i$.
Next, we estimate
\begin{eqnarray}\label{zdgj-}\nonumber
&&E(Du, B_{j+1}) \\\nonumber
&\leqslant &2\fint_{B_{j+1}}|Dw_4^{j}-(Dw_4^{j})_{B_{j+1}}|dx+2\fint_{B_{j+1}}|Du-Dw_4^{j}|dx \\ \nonumber
&\leqslant &4^{\beta}2\delta^{\beta}\fint_{\frac{1}{4}B_{j}}|Dw_{4}^j-(Dw_4^j)_{\frac{1}{4}B_{j}}|dx\\ \nonumber
&+&2\fint_{B_{j+1}}|Du-Dw_0^{j}| +|Dw_0^{j}-Dw_1^{j}|+|Dw_1^{j}-Dw_2^{j}|+|Dw_2^{j}-Dw_3^{j}|+|Dw_3^{j}-Dw_4^{j}|dx \\   \nonumber
&\leqslant &4^{\beta+n+1}\delta^{\beta}\fint_{B_{j}}|Du-(Du)_{B_{j}}|dx+c_7 \delta^{-n}\fint_{\frac{1}{4}B_j}|Dw_3^j-Dw_4^j|dx \\
&+&c_7 \delta^{-n}\fint_{B_j}|Du-Dw_0^j|+|Dw_0^j-Dw_1^j|+|Dw_1^j-Dw_2^j|+|Dw_2^j-Dw_3^j|dx
\end{eqnarray}
for  $j=i_{e}-1,...,i+1$.
Now we proceed to  reduce the value of $\delta$ further in order to obtain
$$4^{\beta+n+1}\delta^{\beta}\leqslant \frac{1}{4}.$$
Therefore, thanks to \eqref{lamg} and Lemma \ref{xsbj}, we have
\begin{eqnarray*}
E(Du, B_{j+1}) &\leqslant& \frac{1}{4}E(Du, B_{j})\\
&+&c_{7}\delta^{-2n}\frac{\lambda}{g(\lambda)}\left[ \frac{|\mu|(\overline{B_{j-1}})}{r_{j-1}^{n-1}}
+\frac{D\Psi_1(B_{j-1})}{r_{j-1}^{n-1}}+\frac{D\Psi_2(B_{j-1})}{r_{j-1}^{n-1}}\right] +c_7\delta^{-n}\omega(r_j)^{\frac{1}{1+s_g}}\lambda
\end{eqnarray*}
for  $j=i_{e}-1,...,i+1$,
it follows
\begin{eqnarray*}
\sum_{j=i_e-1}^{i+2}E_j&\leqslant& E_{i_e-1}+\frac{1}{4}\sum_{j=i_e-1}^{i+1}E_j \\
&+&\frac{c_7}{\delta^{2n}}\frac{\lambda}{g(\lambda)}\sum_{j=0}^{+\infty}\left[ \frac{|\mu|(B_{j})}{r_{j}^{n-1}}+\frac{D\Psi_1(B_{j})}{r_{j}^{n-1}}+\frac{D\Psi_2(B_{j-1})}{r_{j-1}^{n-1}}\right]+c_7\delta^{-n}\lambda\sum_{j=0}^{+\infty}\omega(r_j)^{\frac{1}{1+s_g}} \\
&\leqslant&2 E_{i_e-1} +\frac{c_7}{\delta^{2n}}\frac{\lambda}{g(\lambda)}\sum_{j=0}^{+\infty}\left[ \frac{|\mu|(B_{j})}{r_{j}^{n-1}}+\frac{D\Psi_1(B_{j})}{r_{j}^{n-1}}+\frac{D\Psi_2(B_{j-1})}{r_{j-1}^{n-1}}\right] \\
&+&c_7\delta^{-n}\lambda\sum_{j=0}^{+\infty}\omega(r_j)^{\frac{1}{1+s_g}}.
\end{eqnarray*}
Subsequently, we proceed to estimate all the terms on the right-hand side of the inequality above.
\begin{equation*}
 \sum_{j=0}^{+\infty}\frac{|\mu|(B_j)}{r_{j}^{n-1}}\leqslant c_{1}\mathbf{\RNum{1}}^{|\mu|}_{1}(x_0,2R)) \leqslant \frac{c_1}{H_{2}}g(\lambda).
\end{equation*}
\begin{equation*}
 \sum_{j=0}^{+\infty}\frac{D\Psi_1(B_j)}{r_{j}^{n-1}}\leqslant c_{1}\mathbf{\RNum{1}}^{[\psi_1]}_{1}(x_0,2R)) \leqslant \frac{c_1}{H_{3}}g(\lambda).
\end{equation*}
\begin{equation*}
 \sum_{j=0}^{+\infty}\frac{D\Psi_2(B_j)}{r_{j}^{n-1}}\leqslant c_{1}\mathbf{\RNum{1}}^{[\psi_2]}_{1}(x_0,2R)) \leqslant \frac{c_1}{H_{4}}g(\lambda).
\end{equation*}
We further choose $H_2=H_2(n,\delta,i_g,s_g,l,L)$, $H_3=H_3(n,\delta,i_g,s_g,l,L)$ and $H_4=H_4(n,\delta,i_g,s_g,l,L)$ to be sufficiently large in order to obtain
$$\frac{c_7}{\delta^{2n}}\frac{c_1}{H_2}\leqslant\frac{\delta^n}{300}, \ \ \ \ \frac{c_7}{\delta^{2n}}\frac{c_1}{H_3}\leqslant\frac{\delta^n}{300}, \ \ \ \ \frac{c_7}{\delta^{2n}}\frac{c_1}{H_4}\leqslant\frac{\delta^n}{300}.$$
And by \eqref{dytj}, Wefurther proceed to  reduce the value of $r$-depending on $\delta,data$ such that
$$\sum_{j=0}^{+\infty}\omega(r_j)^{\frac{1}{1+s_g}}\leqslant c_{8}\int_{0}^{2r}\omega(\rho)^{\frac{1}{1+s_g}}\frac{d\rho}{\rho}\leqslant \frac{\delta^{2n}}{100c_7}.$$
Therefore, the inequalities stated above enable us to obtain
\begin{equation*}
\sum_{j=i_e-1}^{i+2}E_j\leqslant2 E_{i_e-1} +\frac{\lambda\delta^n}{50}\leqslant \frac{2}{5}\delta^{n}\lambda,
\end{equation*}
which implies
\begin{eqnarray*}
a_{i+1}&=&a_{i_e}+\sum_{j=i_e}^{i}(a_{j+1}-a_j) \\
&\leqslant& a_{i_e}+\sum_{j=i_e}^{i} \fint_{B_{j+1}}|Du-(Du)_{B_j}|dx \\
&\leqslant& \frac{\lambda}{10}+\frac{1}{\delta^n}\sum_{j=i_e}^{i}E_j \\
&\leqslant& \frac{2}{5}\lambda.
\end{eqnarray*}
Finally, we derive
\begin{eqnarray*}
\fint_{B_{i+1}}|Du|dx &\leqslant& \fint_{B_{i+1}}|Du-(Du)_{B_{i+1}}|+|(Du)_{B_{i+1}}|dx \\
&\leqslant& \frac{2}{5}\lambda+\frac{2}{5}\lambda \leqslant \frac{4}{5}\lambda.
\end{eqnarray*}
Therefore,  we obtain
$$|Du(x_0)|\leqslant \lim_{i\rightarrow \infty}\fint_{B_{i}}|Du|dx \leqslant \lambda.$$
Notably, the selection of parameters in the proof is feasible. Initially, we choose  $\delta$ to be sufficiently small, then we ensure that  $m=m(\delta)$  is sufficiently large, followed by selecting  $r$, which depends on both $m$ and $\delta$ to be suitably small. Finally, we set  $H_{1}=H_{1}(\delta,r)$, $H_{2}=H_{2}(m,\delta)$, $H_{3}=H_{3}(m,\delta)$ and $H_{4}=H_{4}(m,\delta)$ to be sufficiently large. With these choices, we conclude the proof of Theorem \ref{th1}.
\end{proof}
We now turn our attention to the demonstration of Theorem \ref{th2}. To be more specific, we will provide a brief outline of the proof of the subsequent Proposition \ref{prop1}, as with the potential estimate \eqref{gdex} in place, along with the Lemmas   proved in the previous  sections and the   Proposition \ref{prop1}, utilizing basic strategies extensively utilized in the preceding content,  this proof closely resembles the  Theorem 1.5 in \cite{km00}.
\begin{proposition}\label{prop1}
Suppose that the above assumptions of Theorem \ref{th1}   are satisfied, and moreover ,if
\begin{equation*}
\lim_{r\rightarrow 0}\frac{D\Psi_1(B_{r}(x))}{r^{n-1}}=\lim_{r\rightarrow 0}\frac{D\Psi_2(B_{r}(x))}{r^{n-1}}=\lim_{r\rightarrow 0}\frac{|\mu|(B_{r}(x))}{r^{n-1}}=0 \ \ \ \ \ \ \ \ \ locally \  uniformly \ in \ \Omega \ w.r.t. \ x,
\end{equation*}
then $Du$ is locally VMO-regular in $\Omega$. More precisely, for every $\varepsilon \in (0,1)$ and any open subsets $\Omega'\subset\subset \Omega'' \subset\subset \Omega$, there exists a radius $0<r_{\varepsilon}<dist( \Omega', \partial \Omega'')$, depending on $n,i_g,s_g,$ $v,L,M,$ $\mu(\cdot),$ $||Du||_{L^{\infty}(\Omega'')},$ $\omega(\cdot),$ $\varepsilon,\beta$ such that
\begin{equation}\label{brho}
\fint_{B_{\rho}(x_0)}|Du-(Du)_{B_{\rho}(x_0)}|dx \leqslant \varepsilon \lambda, \ \ \ \ \ \ \ \lambda:=||Du||_{L^{\infty}(\Omega'')}
\end{equation}
holds for $\rho \in (0,r_{\varepsilon})$ and $x_0\in \Omega'.$
\end{proposition}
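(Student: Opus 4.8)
\textbf{Proof strategy for Proposition \ref{prop1}.}
The plan is to adapt the iteration machinery of Theorem \ref{th1}, but now producing an oscillation decay that is \emph{uniform} over base points $x_0\in\Omega'$ rather than merely a pointwise bound at a Lebesgue point. First I would fix $\Omega'\subset\subset\Omega''\subset\subset\Omega$, set $\lambda:=\|Du\|_{L^\infty(\Omega'')}$, and reuse the dyadic setup \eqref{biii2}: balls $B_i=B_{r_i}(x_0)$ with $r_i=\delta^i r$, the comparison maps $w_0^i,w_1^i,w_2^i,w_3^i,w_4^i$ on $B_i$, and the excess $E_i=E(Du,B_i)$. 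Since $|Du|\leqslant\lambda$ a.e.\ in $\Omega''$ we automatically have $a_i\leqslant\lambda$ and $E_i\leqslant 2\lambda$ for all $i$, so the exit-time argument of Theorem \ref{th1} is not needed; instead the smallness must come entirely from the hypothesis that
$$
\frac{|\mu|(B_r(x))}{r^{n-1}},\qquad \frac{D\Psi_1(B_r(x))}{r^{n-1}},\qquad \frac{D\Psi_2(B_r(x))}{r^{n-1}}
$$
tend to $0$ locally uniformly, together with the Dini condition \eqref{dytj} on $\omega(\cdot)^{1/(1+s_g)}$. Concretely, given $\varepsilon\in(0,1)$ I would choose $\delta=\delta(data,\beta)$ small (as in Theorem \ref{th1}, so that $4^{\beta+n+1}\delta^\beta\leqslant\frac14$ and the companion smallness requirements hold), then $m=m(\delta,\beta,data)$ large, and finally a radius $r_\varepsilon>0$ small enough that on every ball of radius $\leqslant r_\varepsilon$ centered in $\Omega'$ the three scaled densities above are $\leqslant\varepsilon_0$ and $\sum_{j\geqslant 0}\omega(r_j)^{1/(1+s_g)}\leqslant\varepsilon_0$, for a threshold $\varepsilon_0=\varepsilon_0(\varepsilon,data,\delta,m)$ to be fixed.

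The core of the argument is the excess-decay recursion already assembled inside the proof of Theorem \ref{th1}: combining the comparison estimates of Corollary \ref{dudw1c}, Lemma \ref{dudw1}, Lemma \ref{dw1w2}, Lemma \ref{dw2w3} with the a priori regularity of $w_3^j$ and $w_4^j$ from Lemma \ref{zcth} and Lemma \ref{xsbj}, one obtains for suitable $j$
$$
E_{j+1}\leqslant \tfrac14 E_j+\frac{c\,\delta^{-2n}\lambda}{g(\lambda)}\Bigl[\frac{|\mu|(B_{j-1})}{r_{j-1}^{n-1}}+\frac{D\Psi_1(B_{j-1})}{r_{j-1}^{n-1}}+\frac{D\Psi_2(B_{j-1})}{r_{j-1}^{n-1}}\Bigr]+c\,\delta^{-n}\omega(r_j)^{\frac1{1+s_g}}\lambda.
$$
To run this I need the hypothesis $\lambda/H\leqslant|Dw_3^{j-1}|\leqslant H\lambda$ in $B_j$ that Lemmas \ref{glam-0}, \ref{glam-}, \ref{glam-2} require; I would verify the upper bound via \eqref{supdv}-type estimates (which only use $a_j\leqslant\lambda$ and the smallness of the scaled densities) and treat the lower bound by the usual dichotomy — either $|Dw_3^{j-1}|$ is comparably small in $B_j$, in which case $\mathrm{osc}_{B_j}Dw_3^{j-1}$ is controlled directly through Lemma \ref{xsbj}(iii) and the comparison estimates transfer this to $Du$, or the lower bound holds and the weighted Lemmas apply. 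Summing the recursion over $j$ and using the Dini bound on $\sum\omega(r_j)^{1/(1+s_g)}$ plus the uniform smallness of the densities yields $\sum_j E_j\leqslant \tfrac{2}{5}\delta^n\lambda$ (exactly as in the last display block of the proof of Theorem \ref{th1}), hence $E_{j}\leqslant\varepsilon\lambda$ for all $j\geqslant j_0$; converting from dyadic radii $r_j$ to arbitrary $\rho\in(0,r_\varepsilon)$ via \eqref{1.8} and a standard comparison of concentric balls gives \eqref{brho}.

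The main obstacle, I expect, is \textbf{uniformity of the radius $r_\varepsilon$ in $x_0$}. In Theorem \ref{th1} the bad terms were absorbed into a single Riesz potential evaluated at the fixed Lebesgue point, but here I must control the tails $\sum_{j\geqslant 0}\frac{|\mu|(B_{r_j}(x_0))}{r_j^{n-1}}$, and the two obstacle analogues, by a quantity that goes to $0$ \emph{as $r\to 0$ uniformly over $x_0\in\Omega'$}. This is precisely where the hypothesis "locally uniformly in $\Omega$ w.r.t.\ $x$" is used: it guarantees a modulus $\kappa(r)\to 0$ with $\sup_{x_0\in\Omega'}\frac{|\mu|(B_r(x_0))}{r^{n-1}}\leqslant\kappa(r)$, and the dyadic-sum-to-integral comparison (as in the $\sum_i\frac{|\mu|(B_i)}{r_i^{n-1}}\leqslant c_1\mathbf{I}^{|\mu|}_1(x_0,2r)$ step) then bounds the whole tail by $c\,\kappa(2r)\log(1/\delta)^{-1}\cdot(\text{number of relevant scales})$, which can be made $\leqslant\varepsilon_0 g(\lambda)$ for $r$ small independently of $x_0$; similarly $\int_0^{2r}\omega(\rho)^{1/(1+s_g)}\frac{d\rho}{\rho}\to 0$ as $r\to0$ by \eqref{dytj}. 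A secondary technical point is keeping the constant $H$ in Lemmas \ref{glam-0}--\ref{glam-2} under control through the dichotomy so that $c=c(data,H,\delta)$ does not blow up; this is handled by fixing $H$ once and for all (say $H=1000$) as dictated by the oscillation estimate $\mathrm{osc}_{B_{j+1}}|Dw_3^j|\leqslant\sigma\lambda$ with $\sigma=\tfrac{1}{1000}$, exactly as in the proof of Theorem \ref{th1}. Once these uniformities are in place, the rest is a routine repetition of the summation argument, and continuity of $Du$ in Theorem \ref{th2} follows from \eqref{brho} by the integral characterization of continuity (Campanato-type), using that \eqref{limm} implies \eqref{brho}'s hypotheses at every point.
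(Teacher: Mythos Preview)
Your overall plan---replace the exit-time argument by the global bound $\fint_{B_i}|Du|\leqslant\lambda$ and reuse the comparison machinery---is exactly what the paper does. However, the paper's execution differs from yours in one essential respect, and your version as written has a gap.

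The paper does \emph{not} fix $\delta$ and $H$ independently of $\varepsilon$ and then sum the recursion. Instead, all the relevant parameters are chosen $\varepsilon$-dependently: the dichotomy is taken on $\fint_{B_{i+2}}|Du|\,dx$ with threshold $\varepsilon\lambda/2$ (so the ``small'' case gives $E(Du,B_{i+2})\leqslant\varepsilon\lambda$ outright); in the ``large'' case the oscillation lemma is applied with $\sigma=\varepsilon/100$, yielding the lower bound $|Dw_3^{i}|\geqslant\varepsilon\lambda/8$ in $B_{i+1}$, so $H\approx 8/\varepsilon$; and $\delta$ is shrunk until $4^{\beta+n+1}\delta^{\beta}\leqslant\varepsilon/4$. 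This produces the \emph{single-step} inequality
\[
E(Du,B_{i+2})\leqslant \tfrac{\varepsilon}{4}\,E(Du,B_{i+1})+\tfrac{\varepsilon\lambda}{5},
\]
and since $E(Du,B_{i+1})\leqslant 2\lambda$ always, the conclusion $E(Du,B_{i+2})\leqslant\varepsilon\lambda$ follows immediately at every scale---no summation, no induction. The uniform choice of $R_0$ is then dictated by the density hypothesis and the Dini condition, with constants depending on $\varepsilon$ through $\delta$ and $H$.

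Your proposal, by contrast, freezes $H=1000$ and $4^{\beta+n+1}\delta^{\beta}\leqslant\tfrac14$. With these choices the ``small'' branch of your dichotomy on $|Dw_3^{j-1}|$ only delivers $E_j\lesssim\lambda/1000+\text{(comparison errors)}$, which is \emph{not} $\leqslant\varepsilon\lambda$ once $\varepsilon<10^{-3}$. Moreover, the summation argument you invoke from Theorem~\ref{th1} relied on the recursion holding at \emph{every} relevant scale (there the exit-time condition $C_{j+m}>\lambda/10$ forced the lower bound on $|Dw_3^j|$ throughout); here your dichotomy explicitly allows scales where the recursion fails, so you cannot sum and obtain $\sum_jE_j\leqslant\tfrac25\delta^n\lambda$ as written. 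Even if you could sum, a uniform bound on $\sum_jE_j$ does not by itself give $E_j\leqslant\varepsilon\lambda$ for $j\geqslant j_0$ with $j_0$ independent of $x_0$. The fix is precisely the paper's: let the dichotomy threshold, $\sigma$, $H$, and $\delta$ all depend on $\varepsilon$, and read off the bound in one step using only $E_{i+1}\leqslant 2\lambda$.
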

\begin{proof}
For $x_0\in \Omega'$, we define
$$B_{i}:=B_{r_i}(x_0), \ \ \ \ \ \ r_{i}=\delta^{i}r, \ \ \ \ \ \ r\in (\delta R_{0}, R_{0}]$$
where $0<\delta<\frac{1}{2}, 0<R_{0}<dist(\Omega',\partial \Omega'')$ will be specified later.
We start by considering the definition of $\lambda$ and  the inclusion $B_{i}\subseteq \Omega''$, we have
\begin{equation*}
\fint_{B_i}|Du|dx\leqslant \lambda, \ \ \ \ \ \ for \ \forall \ i\in \mathbb{N}.
\end{equation*}
The aim is to establish that, for every $\varepsilon>0$,  it holds true that
\begin{equation}\label{elam}
E(Du,B_{i+2})\leqslant \varepsilon \lambda, \ \ \ \  i\in \mathbb{N}.
\end{equation}
Without of generality, we may assume that
\begin{equation*}
\fint_{B_{i+2}}|Du|dx\geqslant  \frac{\varepsilon \lambda}{2},
\end{equation*}
otherwise, \eqref{elam} is trival.

Next, let us select $R_{0}=R_{0}(data,\mu(\cdot),||Du||_{L^{\infty}(\Omega'')},\varepsilon,\delta,\omega(\cdot))$ to be sufficiently small  to obtain
\begin{eqnarray}\label{r0gj}\nonumber
&&\sup_{0<\rho<R_{0}}\sup_{x\in\Omega'}\left[ \frac{|\mu|(\overline{B_{\rho}}(x))}{\rho^{n-1}}+\frac{D\Psi_1(B_{\rho}(x))}{\rho^{n-1}} +\frac{D\Psi_2(B_{\rho}(x))}{\rho^{n-1}} \right] \leqslant g\left[ \frac{\varepsilon \lambda \delta^{2n}}{100c_1}\left( \frac{\delta^{n}}{10c_{4}}\right) ^{\frac{1}{i_g}}\right]  \\
&& \ \ \ \ \ \ \ \ \ \ \ \ \ \ \ \ \ \  \ \ \ \ \ \ \ \ \ \ \ \sup_{0<\rho<R_{0}} \omega(\rho)^{\frac{1}{1+s_g}}\leqslant \frac{\delta^{n}}{10c_{4}}.
\end{eqnarray}
Similar to the proof of Theorem \ref{th1}, we conclude that
\begin{eqnarray}\label{c3gj}\nonumber
\sup_{B_{i+1}}|Dw_3^{i}|&\leqslant& \sup_{\frac{1}{2}B_{i}}|Dw_3^{i}|\\ \nonumber
 &\leqslant& c_{1}\left[ g^{-1}\left( \frac{|\mu|(\overline{B_i})}{r_{i}^{n-1}}\right) +g^{-1}\left( \frac{D\Psi_1(B_i)}{r_{i}^{n-1}}\right)+g^{-1}\left( \frac{D\Psi_2(B_i)}{r_{i}^{n-1}}\right) \right]  +\lambda \\
 &\leqslant& c_{2}\lambda.
\end{eqnarray}
On the other hand, by using \eqref{r0gj}, Corollary \ref{dudw1c}, Lemma \ref{dudw1}, Lemma \ref{dw1w2} and Lemma \ref{dw2w3},  we have
\begin{eqnarray*}
\sup_{B_{i+1}}|Dw_3^{i}|&\geqslant& \fint_{B_{i+2}}|Dw_3^{i}|dx \\
&\geqslant& \fint_{B_{i+2}}|Du|dx-\fint_{B_{i+2}}|Du-Dw_1^{i}|+|Dw_1^{i}-Dw_2^{i}|+|Dw_2^{i}-Dw_3^{i}|dx \\
&\geqslant& \frac{\varepsilon \lambda}{2}-c_{1}\delta^{-2n}\left[ g^{-1}\left( \frac{|\mu|(\overline{B_i})}{r_{i}^{n-1}}\right)+\left( \frac{D\Psi_1(B_i)}{r_{i}^{n-1}}\right) +\left( \frac{D\Psi_2(B_i)}{r_{i}^{n-1}}\right)\right]   \\
&\geqslant& \frac{\varepsilon \lambda}{4}.
\end{eqnarray*}
So that there exists a point $x_{1}\in B_{i+1}$ such that
$$|Dw_3^{i}(x_1)|>\frac{\varepsilon \lambda}{4}.$$
Subsequently, we utilize Lemma \ref{xsbj} with $\sigma=\frac{\varepsilon}{100}$. We choose a sufficiently small  $\delta>0$  such that $B_{i+1}\subseteq \overline{\delta}B_{i}$, where $\overline{\delta}=\overline{\delta}(data,\omega(\cdot),\beta,\varepsilon)$as defined in Lemma  \ref{xsbj}. This yields,
$$osc_{B_{i+1}}|Dw_3^{i}|\leqslant \frac{\varepsilon \lambda}{100}.$$
Therefore,  for any $x\in B_{i+1}$, we have
\begin{equation*}
|Dw_3^{i}(x)|\geqslant |Dw_3^{i}(x_1)|-|Dw_3^{i}(x)-|Dw_3^{i}(x_1)|\geqslant \frac{\varepsilon \lambda}{8}.
\end{equation*}
The combination of \eqref{c3gj} with the preceding inequality yields
$$\frac{\varepsilon \lambda}{8}\leqslant |Dw_3^{i}|\leqslant c_{2}\lambda \ \ \ \ \ in \ \ B_{i+1}.$$
Thus,  all the assumptions of the Lemma \ref{glam-0}, Lemma \ref{glam-} and Lemma \ref{glam-2} are satisfied, then we  derive
\begin{eqnarray*}
&&\fint_{B_{i+1}}|Du-Dw_{0}^{i+1}|+|Dw_{0}^{i+1}-Dw_{1}^{i+1}|+|Dw_{1}^{i+1}-Dw_{2}^{i+1}|+|Dw_{2}^{i+1}-Dw_{3}^{i+1}|dx \\
&\leqslant &c_{3}\frac{\lambda}{g(\lambda)}\left[\frac{|\mu|(\overline{B_{i}})}{r_{i}^{n-1}}+\frac{D\Psi_1(B_{i})}{r_{i}^{n-1}}+\frac{D\Psi_2(B_{i})}{r_{i}^{n-1}} \right].
\end{eqnarray*}
Moreover, following the same procedure as in the calculation  of \eqref{zdgj-}, we have
\begin{eqnarray*}
E(Du,B_{i+2})&\leqslant& 4^{\beta+n+1}\delta^{\beta}E(Du,B_{i+1}) \\
&+&c_{4}\delta^{-n}\frac{\lambda}{g(\lambda)}\left[\frac{|\mu|(\overline{B_{i}})}{r_{i}^{n-1}}+\frac{D\Psi_1(B_{i})}{r_{i}^{n-1}} +\frac{D\Psi_2(B_{i})}{r_{i}^{n-1}}  \right]+c_{4}\delta^{-n}\omega(r_{i+1})^{\frac{1}{1+s_g}}\lambda.
\end{eqnarray*}
By choosing $\delta=\delta(data,\beta,\varepsilon)$ sufficiently small, we ensure that $$4^{\beta+n+1}\delta^{\beta}\leqslant \frac{\varepsilon}{4}.$$
Furthermore, by\eqref{r0gj}, we derive
\begin{equation*}
E(Du,B_{i+2})\leqslant \frac{\varepsilon}{4}E(Du,B_{i+1})+\frac{\varepsilon\lambda}{5}.
\end{equation*}
Consequently,  we derive \eqref{elam} by induction.
Finally, we choose $r_{\varepsilon}=\delta^{3}R_0$,  ensuring that for any $0<\rho<\delta^{3}R_0$, there exists an integer $m\geqslant3$ such that
$\delta^{m+1}R_0<\rho<\delta^{m}R_0$, which means that
$\rho=\delta^{m}r$ for some $r\in(\eta R_0,R_0]$ and \eqref{brho} follows from \eqref{elam}.Then we derive Propsition \ref{prop1}.
\end{proof}

\section*{Acknowledgments}The authors are supported by 
the Fundamental Research Funds for the Central Universities 
  (Grant No.  2682024CX028) and the  National Natural Science Foundation of China (Grant No.~12071229 and 12101452).

\end{document}